\newcommand{\dedge}[1]{\ar@{--}[#1]}
\newcommand{\edge}[1]{\ar@{-}[#1]}
\newcommand{\lulab}[1]{\ar@{}[l]_<<{#1}}
\newcommand{\rulab}[1]{\ar@{}[r]^<<{#1}}
\newcommand{\ldlab}[1]{\ar@{}[l]^<<{#1}}
\newcommand{\rdlab}[1]{\ar@{}[r]_<<{#1}}
\newcommand{\node}{*+[o][F-]{ }}
 \newcommand{\gh}{\mathscr{H}}
  \newcommand{\gr}{\mathscr{R}}
  \newcommand{\gl}{\mathscr{L}}
  \newcommand{\gp}{\mathscr{P}}
  \newcommand{\gq}{\mathscr{Q}}
  \newcommand{\gG}{\mathscr{G}}
 \newcommand{\bba}{\mathbb{A}}
 \newcommand{\bbb}{\mathbb{B}}
 \newcommand{\bbe}{\mathbb{E}}
 \newcommand{\bbp}{\mathbb{P}}
 \newcommand{\bbq}{\mathbb{Q}}
 \newcommand{\bbf}{\mathbb{F}}
 \newcommand{\bbh}{\mathbb{H}}
 \newcommand{\bbd}{\mathbb{D}}
 \newcommand{\bby}{\mathbb{Y}}
 \newcommand{\da}{\downarrow}
 \newcommand{\ua}{\uparrow}
\newtheorem{thm}{Theorem}
\newtheorem{cor}{Corollary}
\newtheorem{defn}{Definition}
\newtheorem{prop}{Proposition}
\newtheorem{lem}{Lemma}
\theoremstyle{definition}
\newtheorem{remark}{Remark}
\newcommand{\lb}{\langle}
\newcommand{\rb}{\rangle}
\begin{document}

\title[]{Homotopy bases and finite derivation type for Sch\"{u}tzenberger groups of monoids}
\keywords{Complete rewriting systems, Finitely presented groups and monoids, Finiteness conditions, Sch\"{u}tzenberger groups, Homotopy bases, Finite derivation type.}

\maketitle

\vspace{-6mm}

\begin{center}

    R. GRAY\footnote{
	  Part of this work was done while this author held an EPSRC Postdoctoral Fellowship at the University of St Andrews.}

    \medskip

    Centro de \'{A}lgebra da Universidade de Lisboa, \\ Av. Prof. Gama Pinto, 2,  1649-003 Lisboa,  \ Portugal.

    \medskip

    \texttt{rdgray@fc.ul.pt}

    \bigskip

    A. MALHEIRO\footnote{
	This work was developed within the projects POCTI-ISFL-1-143 and PTDC/MAT/69514/2006 of CAUL, financed by FCT and FEDER, and
	Supported by the Treaty of Windsor scheme of the British Council of Great Britain and Portugal. }

    \medskip

    Centro de \'{A}lgebra da Universidade de Lisboa, \\ Av. Prof. Gama Pinto, 2,  1649-003 Lisboa,  \ Portugal.
    \\ and \\ Departamento de Matem\'{a}tica, Faculdade de Ci\^{e}ncias e Tecnologia \\ da Universidade Nova de Lisboa,
    2829-516 Caparica, Portugal

    \medskip

    \texttt{malheiro@cii.fc.ul.pt}

    \bigskip

    S. J. PRIDE

    \medskip

    Department of Mathematics,\ University of Glasgow,  \\
    University Gardens, G12 8QW, Scotland.

    \medskip

    \texttt{stephen.pride@gla.ac.uk} \\
\end{center}

\begin{abstract}
Given a finitely presented monoid and a homotopy base for the monoid, and given an arbitrary Sch\"{u}tzenberger group of the monoid, the main result of this paper gives a homotopy base, and presentation, for the Sch\"{u}tzenberger group. In the case that the $\gr$-class $R$ of the Sch\"{u}tzenberger group $\gG(H)$ has only finitely many $\gh$-classes, and there is an element $s$ of the multiplicative right pointwise stabilizer of $H$, such that under the left action of the monoid on its $\gr$-classes the intersection of the orbit of the $\gr$-class of $s$ with the inverse orbit of $R$ is finite, then finiteness of the presentation and of the homotopy base is preserved.

\

\noindent \textit{2010 Mathematics Subject Classification:} 20M50;  20M05, 68Q42
\end{abstract}

\section{Introduction}
\label{sec_intro}






The history of rewriting systems in monoids and groups is about one hundred years old. 
A presentation for a monoid or group may be viewed as a rewriting system. Sometimes this system may be chosen to have certain desirable properties which can then lead to solutions to classical algorithmic problems.  
For example, if the rewriting system is finite and complete (also called convergent) then it can be used to solve the word problem and
to find normal forms for elements of the monoid; see \cite{Book1}. 
This is one of the main reasons for the importance of complete rewriting systems in group and semigroup theory, and there is a vast body of literature on the subject \cite{Silva2009_2, Silva2009_1, Chouraqui2006, Miasnikov2009, Goodman2008, Hermiller1999, Pride2005}.

Of course, not every finitely presented monoid admits a presentation by a finite complete rewriting system (even if the monoid has soluble word problem). One of the key techniques for showing that a given monoid or group does \emph{not} admit such a presentation is to show that the monoid does not satisfy one of a family of geometric finiteness properties which are known to be satisfied by monoids that admit finite complete rewriting systems. 
We refer the reader to the survey articles \cite{Cohen1997} and \cite{OttoSurvey} for more about finiteness properties of this kind. 
In this article our interest is in one such property, known as finite derivation type; originally introduced and investigated by Squier in \cite{Squier1}. 

Given a rewriting system (i.e. monoid presentation) $\lb A | \mathfrak{R} \rb$ one builds a (combinatorial) $2$-complex $\mathcal{D}$, called the \emph{Squier complex}, whose $1$-skeleton has vertex set $A^*$ and edges corresponding to applications of relations from $\mathfrak{R}$, and that has $2$-cells adjoined for each instance of ``non-overlapping'' applications of relations from $\mathfrak{R}$ (see Section~\ref{sec_prelims} for a detailed definition). There is a natural action of the free monoid $A^*$ on the Squier complex $\mathcal{D}$. A  collection of closed paths in $\mathcal{D}$ is called a \emph{homotopy base} if the complex obtained by adjoining cells for each of these paths, and those that they generate under the action of the free monoid on the Squier complex, has trivial fundamental groups. A monoid defined by a presentation is said to have \emph{finite derivation type} (FDT for short) if the corresponding Squier complex admits a finite homotopy base. Squier \cite{Squier1} proved that the property FDT is independent of the choice of finite presentation, so we may speak of FDT monoids. The original motivation for studying this notion is Squier's result \cite{Squier1} which says that if a monoid admits a presentation by a finite complete rewriting system then the monoid must have finite derivation type. Further motivation for the study of these concepts comes from the fact that the fundamental groups of connected components of Squier complexes, which are called \emph{diagram groups}, have turned out to be a very interesting class of groups, and have been extensively studied in
\cite{Guba2006, Farley2003, Farley2005, Guba, Guba1999, GubaSapir2006}.

Associated in a natural way with an arbitrary monoid is a certain family of groups, called the Sch\"{u}tzenberger groups of the monoid (introduced in \cite{Schutzenberger1957,Schutzenberger1958}) which encode much of the underlying group theoretic structure of the monoid.  
Our interest here is in relating the property FDT holding in the monoid to the same property holding in its Sch\"{u}tzenberger groups, in particular identifying circumstances under which FDT is inherited by the Sch\"{u}tzenberger group from the monoid. 
Corresponding results for finite generation and presentability were obtained in \cite{Ruskuc2000}. 
For finitely presented groups FDT is equivalent to the homological finiteness property ${\rm FP}\sb 3$ (see \cite{Cremanns1}). Therefore,
as well as giving a natural higher dimension analogue of the main result of \cite{Ruskuc2000}, the main result we present here also has the potential to be applied to show that particular monoids do not have FDT, and therefore do not admit presentations by finite complete rewriting systems, by showing that an appropriately ``embedded'' Sch\"{u}tzenberger group is not of type ${\rm FP}\sb 3$.

Including this introduction, this article comprises five sections.
In Section~2 we recall some basic ideas about semigroup actions, including the definition of Sch\"{u}tzenberger group, and then state our main result and explain some of its consequences. Basic definitions and results about rewriting systems, homotopy bases, and finite derivation type will be given in Section~3, where we also recall some fundamental ideas from the structure theory of semigroups that we shall need. In Section~4 we give a presentation for an arbitrary Sch\"{u}tzenberger group and we prove the first part of our main result Theorem~\ref{thm_main_result}(i). 
Finally in Section~5 we give a homotopy base for an arbitrary Sch\"{u}tzenberger group, and prove Theorem~\ref{thm_main_result}(ii) which is the main result of this article.

\section{Statement of main result}
\label{sec_main_result}

Many of the most powerful tools that are available for investigating the algebraic structure of semigroups are best understood from the point of view of actions. The most basic observation of all, of course, is that a monoid $S$ acts on itself via left (and dually right) multiplication. Arising from any action of a monoid $S$ on a set $X$ is a preorder structure on $X$ given by setting $x \leq y$ if and only if $x$ belongs to the orbit $Sy$ of $y$. The classes of the corresponding equivalence relation are the strong orbits of the action, which of course partition the set $X$. By the inverse orbit of $x \in X$ we mean the set of all $y \in X$ such that $x$ belongs to the orbit $Sy$ of $y$. In the case of the natural actions of $S$ on itself via left multiplication the orbits are the principal left ideals, the preorder is the $\leq_\gl$ relation,
$
s \leq_\gl t$ if and only if  $s \in St,
$
and the strong orbits of this action are nothing but the Green's $\gl$-classes of the monoid (in the sense of \cite{Green1} or \cite[Chapter~2]{Howie1}). Dually the right multiplicative action gives rise to the principal right ideals, the preorder $\leq_\gr$ and the $\gr$-relation. The next thing one observes is that the left multiplicative action induces an action $*$ of $S$ on the set $S / \gr$ of $\gr$-classes where
\[
s * R = R' \Leftrightarrow sR \subseteq R'
\quad
(s \in S, R, R' \in S / \gr).
\]
This is well-defined as $\gr$ is a left congruence (i.e. $x \gr y$ implies $sx \gr sy$ for all $x,y,s \in S$). Dually $S$ acts on the set of $\gl$-classes $S / \gl$ on the right.

Intersecting $\gr$ with $\gl$ we obtain Green's relation $\gh = \gr \cap \gl$. The importance of this relation lies in the way that the $\gh$-classes reflect the underlying local group-theoretic structure of the semigroup. The first place this idea is encountered is when one considers the subgroups of a semigroup.
Let $e$ be an idempotent of a semigroup $S$. The set $eSe$ is a submonoid in $S$ and is the largest
submonoid (with respect to inclusion) whose identity element is $e$. The group of units $G_e$ of $eSe$, that
is the group of elements of $eSe$ that have two sided inverses with respect to $e$, is the largest subgroup
of S (with respect to inclusion) whose identity is e and is called the maximal subgroup of $S$ containing $e$.
If an $\gh$-class $H$ contains an idempotent element $e$ then $H$ must be a maximal subgroup of $S$ (with identity element $e$). Conversely, every maximal subgroup arises in this way. Thus maximal subgroups and group $\gh$-classes are one and the same. On the other hand, if $H$ is an $\gh$-class that does not contain an idempotent then, on the surface at least, $H$ is as far from being a group as one could imagine in the sense that
$
H^2 \cap H = \varnothing.
$
However, Sch\"{u}tzenberger \cite{Schutzenberger1957, Schutzenberger1958} observed 
that given an arbitrary $\gh$-class of a monoid $S$ there is a natural way that one may associate a group $\gG(H)$ with $H$. Moreover, this is done in such a way that when $H$ is a maximal subgroup (i.e. does contain an idempotent) we have $\gG(H) \cong H$. The (right) Sch\"{u}tzenberger group $\gG(H)$ of $H$ is obtained by taking the action of the setwise stabilizer of $H$ on $H$, under right multiplication by elements of the monoid, and making it faithful. That is, given an arbitrary $\gh$-class $H$ of $S$, let $\mathrm{Stab}(H) = \{ s \in S : Hs=H \}$ denote the \emph{(right) stabilizer} of $H$ in $S$. Then define an equivalence $\sigma=\sigma(H)$ on the stabilizer by $(x,y) \in \sigma$ if and only if $hx = hy$ for all $h\in H$. It is straightforward to verify that $\sigma$ is a congruence, and that $\gG(H)=\mathrm{Stab}(H) / \sigma$ is a group, which is called the \emph{right Sch\"{u}tzenberger group} of $H$. There is a dual notion of left Sch\"{u}tzenberger group which turns out to be naturally isomorphic to the right Sch\"{u}tzenberger group.

So, the natural actions of a semigroup on itself, and its subsets, give rise to a partition $S / \gh$ of $S$ into $\gh$-classes, with each of which 
is a naturally associated Sch\"{u}tzenberger group 
$\gG(H)$. From this point of view one may view an arbitrary semigroup $S$ as being built up from its collection of Sch\"{u}tzenberger groups $\{ \mathcal{G}(H) : H \in S / \gh \}$. This viewpoint provides one of the most powerful routes connecting semigroup theory with group theory, and naturally leads one to consider to what extent a semigroup $S$ may be understood via analysis of its Sch\"{u}tzenberger groups.

An important special case of this approach may be found in the world of regular semigroups. A semigroup $S$ is called (von Neumann) regular if for every $x \in S$ there exists $x' \in S$ satisfying $xx'x=x$. This is equivalent to saying that every $\gr$-class of $S$ contains an idempotent. Thus, for regular semigroups the general approach outlined above reduces to the question of the extent to which regular semigroups may be studied via their maximal subgroups. Deep insights into regular semigroups can often be obtained by careful analysis of their maximal subgroups; see \cite{Brittenham2009} for a recent example of 
a result of this kind. 
Correspondingly, insights into arbitrary semigroups can be obtained by studying their Sch\"{u}tzenberger groups.  

Our interest here is in homotopy bases, and finite derivation type, and the extent to which the study of these concepts for a monoid relates to the study of the corresponding properties in the Sch\"{u}tzenberger groups of the monoid. Specifically, given a finitely presented monoid $S$ and a homotopy base for the monoid, and given an arbitrary Sch\"{u}tzenberger group of the monoid, we shall (in Section~\ref{sec_hbase}) construct a homotopy base, and presentation (in Section~\ref{sec_presentation}), for the Sch\"{u}tzenberger group. As a corollary, we obtain the following which is the main result of the article.

\begin{thm}
\label{thm_main_result}
Let $S$ be a monoid, $H$ be an $\gh$-class of $S$ with Sch\"{u}tzenberger group $\gG$, and let $R$ be the $\gr$-class that contains $H$. Furthermore let $s \in S$ be any element from the right multiplicative pointwise stabilizer of $H$. If:
\begin{enumerate}
\item[\emph{(a)}] $R$ has only finitely many $\gh$-classes; and
\item[\emph{(b)}] under the left action of $S$ on its $\gr$-classes, the intersection of the orbit of $R_s$ with the inverse orbit of $R$ is finite,
\end{enumerate}
then we have the following:
\begin{enumerate}
\item[\emph{(i)}] If $S$ if finitely presented then $\gG$ is finitely presented.
\item[\emph{(ii)}] If $S$ has finite derivation type then $\gG$ has finite derivation type.
\end{enumerate}
\end{thm}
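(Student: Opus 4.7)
My plan is to prove the theorem as a corollary of more general constructions. Starting from a presentation $\lb A \mid \mathfrak{R} \rb$ for $S$ (respectively from such a presentation together with a homotopy base $X$ of the Squier complex $\mathcal{D}$), I will build an explicit presentation (respectively a presentation together with a homotopy base) for the Sch\"{u}tzenberger group $\gG$, and then check that hypotheses (a) and (b) force these constructions to involve only finitely much data. The underlying theme is that $S$ acts on $S/\gr$ and, under this action, the relevant elements of $\mathrm{Stab}(H)$ correspond to loops in the $\gr$-class graph that begin and end at $R$; finite parts of this graph will index both the generators and the relations of $\gG$.

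For part (i), I would follow the blueprint of the finite presentability result of \cite{Ruskuc2000}. Fix a base point $h_0 \in H$, choose representatives for the finitely many $\gh$-classes of $R$ provided by (a), and choose representatives for the finitely many $\gr$-classes in the intersection of the orbit of $R_s$ with the inverse orbit of $R$ provided by (b). Generators of $\gG$ are indexed by pairs $(a, R')$ in which $a \in A$ and $R'$ sits above $R$ in the sense that $a * R' \subseteq R$; composed with the chosen transversal data, such a pair determines an element of $\mathrm{Stab}(H)$ and hence of $\gG$. Relations arise by translating the defining relations in $\mathfrak{R}$ along all admissible transversal paths, together with a finite set of coherence relations identifying distinct walks in the $\gr$-class graph that realise the same element modulo $\sigma$. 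Finite presentability of $S$ combined with (a) and (b) bounds the total amount of data by a finite set, giving (i).

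For part (ii), I would apply the same transversal machinery to each $2$-cell of the homotopy base $X$: translated by suitable prefixes and suffixes drawn from the transversal, each cell yields a closed path in the Squier complex of $\gG$. To this lifted family one adjoins a finite collection of coherence cells expressing that different transversal reductions yield equivalent paths, and here the element $s$ from the pointwise stabiliser plays the crucial role of folding excursions away from $R$ back into $R$ in a controlled way. The main obstacle is verifying that the candidate collection genuinely trivialises every closed path in the Squier complex of the constructed presentation of $\gG$: one must lift an arbitrary closed path in $\gG$ to a closed path in $\mathcal{D}$, trivialise the lift using $X$ together with the action of the free monoid $A^*$, and then push the resulting identity back down to $\gG$ coherently. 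The case analysis tracks how each elementary edge in the Squier complex of $\gG$ unfolds into a walk through $S/\gr$, and conditions (a) and (b) ensure this walk stays within a finite portion of the $\gr$-class graph, so that finiteness of the homotopy base is preserved.
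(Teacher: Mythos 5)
Your overall strategy matches the paper's: build an explicit presentation for $\gG$ by rewriting the monoid presentation through transversal data, then lift the homotopy base $X$ through the same machinery and bound everything using hypotheses (a) and (b). That is exactly what the paper does. However, there are two substantive issues.

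First, a correctness issue in your description of the generators for part (i). You index generators by pairs $(a,R')$ where $R'$ is an $\gr$-class with $a*R' \subseteq R$. That is not how the Schreier-type generators of $\gG$ arise. The generators are indexed by pairs $(\lambda,a)$ where $\lambda$ ranges over the $\gh$-classes $H_\lambda$ \emph{inside} $R$ (that is what hypothesis (a) makes finite) and $a \in A$ with $H_\lambda a$ still inside $R$; the generator is $p_\lambda a p'_{\lambda\cdot a} / \sigma$. The $\gr$-class index set $J$ (made finite by (b)) enters only at the level of the \emph{relations}, through the rewriting map $\kappa$, not as an index for generators. Mixing these up would make it unclear which hypothesis finitises which piece of data.

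Second, and more seriously for part (ii): you correctly identify the main obstacle---lift a closed path from $\gG$ back to $\mathcal{D}$, trivialise using $X$, and push the trivialisation coherently back down---but you do not address why the resulting candidate homotopy base is finite. The naive lift produces, for every edge $\bbe$ of the derivation graph of $\gG$, a closed path $\bbe \circ \Lambda_{\tau\bbe} \circ \phi(\theta(\bbe))^{-1} \circ \Lambda_{\iota\bbe}^{-1}$; this is an \emph{infinite} family because the ambient words $w_1,w_2 \in B^*$ framing an elementary edge range over all of $B^*$. The heart of the paper's argument is the reduction of this infinite family to a finite one. This requires an explicit decomposition of $\phi(\theta(\bbe))$ and of $\Lambda_{\iota\bbe}^{-1}\circ\bbe\circ\Lambda_{\tau\bbe}$ that isolates the dependence on $w_1$ and $w_2$ into left- and right-acting factors $\phi_\omega(\pi(w_1))$ and $\phi(\pi(w_2)h)$, together with a finite family of auxiliary ``cancellation'' paths $\bbd_y, \bbd_y^*$ (indexed by the finite set $W$ of words $\phi(1,\kappa(h,j))$, $j\in J$) whose associated closed paths $Y_2$ let one conjugate away the dependence on $w_2$ entirely. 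Only after these homotopies are performed does the remaining ``core'' of each candidate path range over a finite set $Y_3$ (indexed by the finite data $\mathfrak{U}$, $\epsilon = \pm 1$, and the finitely many values of $\psi(w_2)*\eta \in J$). Your phrase ``a finite collection of coherence cells expressing that different transversal reductions yield equivalent paths'' gestures at this but does not supply the mechanism, and without it there is no reason the lifted homotopy base should be finite. This is the genuine content of part (ii) and is the step your proposal leaves open.
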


Note that in the above statement such an element $s$ always exists, since we can always take $s=1$ (although of course we cannot guarantee that (b) is satisfied).  
We note that the result does not hold if either condition (a) or condition (b) is dropped. Indeed, if we keep condition (a) but drop condition (b) then a counterexample may be found in \cite[Section~6]{Ruskuc2000}. 
On the other hand, if we keep condition (b) and drop condition (a) then \cite[Proposition~3.3]{Ruskuc1999} serves as a counterexample. 

Let us make a few further remarks about Theorem~\ref{thm_main_result}.

\smallskip

\noindent $\bullet$ Our main result is part (ii) of Theorem~\ref{thm_main_result}. Part (i) is new, and it was necessary for us to establish (i) before proving (ii). We obtain (i) by carefully modifying the proof of a less general result proved in \cite{Ruskuc2000}. One benefit of working under this weaker set of hypotheses is that Theorem~\ref{thm_main_result} now provides a common generalisation of the main results of \cite{Ruskuc1999} and \cite{Ruskuc2000}, as we now explain.
	\begin{itemize}
	\item[--] If $H$ contains an idempotent $f$ (i.e. is a subgroup of $S$) then $f$ is clearly in the right pointwise stabilizer of $H$
	and so we may apply Theorem~\ref{thm_main_result} setting $s = f$. But then $R = R_f = R_s$ and so condition (b) automatically holds (the set in question having size $1$) and so parts (i) and (ii) of the theorem hold just under the assumption (a). Thus we obtain the results \cite[Corollary~3]{Ruskuc1999} and \cite[Theorem~2.9]{GrayMalheiro} as a corollary of Theorem~\ref{thm_main_result}.
        \end{itemize}	
        \begin{itemize}
	\item[--] By setting $s = 1_S$, the identity of the monoid $S$ which certainly stabilizes $H$ pointwise on the right,
	we obtain precisely the main result \cite[Corollary~4.3]{Ruskuc2000} as a corollary of part (i) of Theorem~\ref{thm_main_result}.   
	\end{itemize}

\smallskip



\noindent $\bullet$ The properties of being finitely generated, presented, and having finite derivation type may, respectively, naturally be viewed as being zero-, one-, and two-dimensional notions. This suggests that Theorem~\ref{thm_main_result} should extend to arbitrary dimensions, it is just not quite clear at the present time what the precise formulation of such a result should be. We note that the homological finiteness property $\mathrm{FP}_n$ is certainly not the right candidate for this since Theorem~\ref{thm_main_result} is far from being true for the property $\mathrm{FP}_n$; see \cite{Kobayashi2009, GrayPride}.

\smallskip

\noindent $\bullet$ We prove Theorem~\ref{thm_main_result} below using combinatorial rewriting methods which allow us to give an explicit presentation and homotopy base for the Sch\"{u}tzenberger group. It would be interesting to know whether it might be possible to prove Theorem~\ref{thm_main_result} using a topological approach instead. Currently the most success in this direction may be found in results about presentations given in \cite{Steinberg1} and \cite{Steinberg2010}. Unfortunately the results that may be proved using these topological methods still fall very far short of the level of generality achieved by Theorem~\ref{thm_main_result} above (and in fact do not manage to encompass the main result of \cite{Ruskuc2000}).

\smallskip

As mentioned above, for finitely presented groups, finite derivation type is equivalent to the homological finiteness property $\mathrm{FP}_3$. Thus one consequence of the above theorem is that if a monoid is to admit a presentation by a finite complete rewriting system, and therefore have $\mathrm{FDT}$, then in many situations this will influence the homological finiteness properties of the Sch\"{u}tzenberger groups of that monoid. Thus, the result has the potential to be used as a tool to show that particular monoids cannot be presented by finite complete rewriting system, by analyzing the homological properties of their Sch\"{u}tzenberger groups.

One very particular situation where the hypotheses (a) and (b) of Theorem~\ref{thm_main_result} are satisfied is when $S$ has finitely many left and right ideals. In this situation, the converse of  Theorem~\ref{thm_main_result}(i) for finite presentability is known to be true: in \cite{Ruskuc2000} it was shown that a monoid with finitely many left and right ideals is finitely presented if and only if all of its Sch\"{u}tzenberger groups are. It is natural to ask whether the same result holds with $\mathrm{FDT}$ in place of finite presentability. 
In one direction, passing from the monoid to the Sch\"{u}tzenberger group, we see from Theorem~\ref{thm_main_result} that the result does hold.
\begin{cor}
\label{corol_finite}
Let $S$ be a monoid with finitely many left and right ideals. If $S$ has finite derivation type then all the Sch\"{u}tzenberger groups of $S$ have finite derivation type. 
\end{cor}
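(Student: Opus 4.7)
The plan is to deduce the corollary directly from Theorem~\ref{thm_main_result}(ii) by verifying that, under the hypothesis that $S$ has finitely many left and right ideals, conditions (a) and (b) of the theorem hold automatically for every $\gh$-class $H$ of $S$, with the canonical choice $s = 1_S$ noted in the remarks after the theorem.

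First I would fix an arbitrary $\gh$-class $H$ of $S$ and let $R$ denote its $\gr$-class. Since $S$ has finitely many left ideals, it has only finitely many $\gl$-classes. As the $\gh$-classes contained in $R$ are precisely the nonempty intersections $R \cap L$ with $L$ ranging over $\gl$-classes of $S$, there can be at most finitely many of them. This verifies condition~(a) of Theorem~\ref{thm_main_result}.

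Next I would take $s = 1_S$, which indeed lies in the right multiplicative pointwise stabilizer of $H$ (so this choice of $s$ is always legitimate, as remarked immediately after the statement of Theorem~\ref{thm_main_result}). Since $S$ has finitely many right ideals, it has only finitely many $\gr$-classes in total; hence both the orbit of $R_s = R_1$ under the left action of $S$ on $S/\gr$ and the inverse orbit of $R$ are finite, and so is their intersection. This verifies condition~(b) trivially.

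With (a) and (b) both satisfied, and $S$ being assumed to have finite derivation type, Theorem~\ref{thm_main_result}(ii) yields that the Sch\"utzenberger group $\gG(H)$ has finite derivation type. Since $H$ was arbitrary, the conclusion follows. There is no genuine obstacle here: the whole content lies in Theorem~\ref{thm_main_result}, and the corollary is simply a matter of recording that monoids with finitely many left and right ideals automatically satisfy the two finiteness hypotheses of that theorem (indeed, in this setting the quantity controlled by (b) is bounded by the total number of $\gr$-classes).
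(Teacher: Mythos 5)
Your proof is correct and matches the paper's (implicit) argument exactly: the paper notes just before the corollary that finitely many left and right ideals is a situation in which hypotheses (a) and (b) of Theorem~\ref{thm_main_result} hold, and you have simply supplied the routine verification (finitely many left ideals gives finitely many $\gl$-classes and hence finitely many $\gh$-classes in $R$; finitely many right ideals gives finitely many $\gr$-classes and hence a finite intersection in (b) for the canonical choice $s = 1_S$).
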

For von Neumann regular semigroups the converse of Corollary~\ref{corol_finite} also holds: in \cite{GrayMalheiro} it was shown that a regular semigroup with finitely many left and right ideals has finite derivation type if and only if all of its Sch\"{u}tzenberger groups (equivalently, all of its maximal subgroups) do. Rather surprisingly however, this result does \emph{not} extend to non-regular semigroups (like it does for finite presentability). Indeed, in \cite{GrayPrideMalheiro} we give an example of a monoid with finitely many left and right ideals, all of whose Sch\"{u}tzenberger groups have $\mathrm{FDT}$, but such that the monoid itself does not have $\mathrm{FDT}$.

\section{Preliminaries}
\label{sec_prelims}

\subsection*{Presentations and rewriting systems}

Let $A$ be a non-empty set, that we call an \emph{alphabet}. We use $A^*$ to denote the free monoid over $A$, and $A^+$ to denote the free semigroup.  
A \emph{rewriting system} over $A$ is a subset $\mathfrak{R} \subseteq A^* \times A^*$. An element of $\mathfrak{R}$ is called a \emph{rule}, and we often write $r_{+1}=r_{-1}$ for $(r_{+1}, r_{-1}) \in \mathfrak{R}$. For $u, v \in A^*$ we write $u \rightarrow_\mathfrak{R} v$ if $u \equiv w_1 r_{+1} w_2$, and $v \equiv w_1 r_{-1} w_2$ where $(r_{+1},r_{-1}) \in \mathfrak{R}$ and $w_1, w_2 \in A^*$. The reflexive symmetric transitive closure $\leftrightarrow_\mathfrak{R}^{*}$ of $\rightarrow_\mathfrak{R}$ is precisely the congruence on $A^*$ generated by $\mathfrak{R}$. The ordered pair $\langle A | \mathfrak{R} \rangle$ is called a \emph{monoid presentation} with \emph{generators} $A$ and set of \emph{defining relations} $\mathfrak{R}$. If $S$ is a monoid that is isomorphic to $A^*  / \leftrightarrow_\mathfrak{R}^{*}$ we say that $S$ is the \emph{monoid defined by the presentation $\lb A | \mathfrak{R} \rb$}. 
Thus every word $w \in A^*$ represents an element of $S$. 
For two words
$w_1$, $w_2 \in A^*$ we write $w_1 \equiv  w_2$ if they are identical as words, and $w_1 = w_2$ if they
represent the same element of $S$, i.e., if $w_1 / \leftrightarrow_\mathfrak{R}^{*} = w_2 / \leftrightarrow_\mathfrak{R}^{*}$. A monoid is said to be \emph{finitely presented} if it may be defined by a presentation with finitely many generators and a finite number of defining relations. For $w \in A^*$ we write $|w|$ to denote the total number of letters in $w$, which we call the \emph{length} of the word $w$. 

\subsection*{Derivation graphs, homotopy bases, and finite derivation type}

We adopt a combinatorial model of $2$-complexes. A \emph{graph} (in the sense of Serre \cite{SerreTrees}) consists of a set $V$ of vertices, a set $E$ of edges and three functions
\[
\iota: E \rightarrow V, \quad
\tau: E \rightarrow V, \quad
{}^{-1}:E \rightarrow E,
\]
called initial, terminal and inverse (respectively),
satisfying $\iota e^{-1} = \tau e $, $\tau e^{-1} = \iota e$, $(e^{-1})^{-1} = e$ and $e^{-1} \neq e$ for all $e \in E$. A \emph{non-empty path} $p$ will consist of a sequence of edges $e_1 \circ e_2 \circ \ldots\circ e_n$ with $\tau e_i = \iota e_{i+1}$ $(1 \leq i < n)$. We then define
\[
\iota p = \iota e_1, \quad
\tau p = \tau e_n, \quad
p^{-1} = e_n^{-1} \circ \ldots\circ e_2^{-1}\circ e_1^{-1}.
\]
For each vertex $v$ there is also the \emph{empty path} $1_v$ at $v$ with no edges and $\iota(1_v) = \tau(1_v) = v$ and $1_v^{-1} = 1_v$. A \emph{closed path}, based at $v$, is a path $p$ satisfying 
$\iota p = \tau p = v$. If $p$ and $q$ are paths with $\tau p = \iota q$ then we can form the \emph{product path} $p\circ q$ consisting of the edges of $p$ followed by the edges of $q$.

A \emph{$2$-complex} $\mathcal{K}$ will consist of a graph, together with a set $C$ of closed paths (the \emph{defining paths}) in the graph. The underlying graph of $\mathcal{K}$ will be called the \emph{$1$-skeleton} of $\mathcal{K}$. We define elementary operations (homotopy moves) on paths as follows:
\begin{enumerate}
\item[(i)] Insert/delete a subpath $e\circ e^{-1}$ ($e$ an edge);
\item[(ii)] Replace a subpath $p$ by $q$ if $p\circ q^{-1}$ is a cyclic permutation of some path in $C \cup C^{-1}$.
\end{enumerate}
A path will be said to be \emph{null-homotopic} if it is homotopic to an empty path.

With a monoid presentation $\gp= \lb A | \mathfrak{R} \rb$ we associate a $2$-complex $\mathcal{D} = \mathcal{D}(\gp)$, which is called the \emph{Squier complex} of $\gp$, as follows. The $1$-skeleton, which we denote by $\Gamma(\gp)$, has vertex set $A^*$, and edge set consisting of the quadruples
\[
\bbe = (\alpha, r, \epsilon, \beta), \quad \alpha, \beta \in A^*, \; r \in \mathfrak{R}, \; \epsilon = \pm 1.
\]
The initial, terminal and inverse functions given by
\[
\iota \bbe \equiv \alpha r_\epsilon \beta, \quad
\tau \bbe \equiv \alpha r_{-\epsilon} \beta, \quad
\bbe^{-1} = (\alpha, r, -\epsilon, \beta),
\]
respectively. The edge $\bbe$ is called \emph{positive} if $\epsilon = +1$. We may represent the edge $\bbe=(\alpha, r, \epsilon, \beta)$ geometrically by an object called a \emph{monoid picture} as follows:
\begin{center}
\scalebox{0.5}
{
\begin{tikzpicture}
[very thick,
blackbox/.style={draw, fill=blue!20, rectangle, minimum height=1.5cm, minimum width=3cm},
dottedbox/.style={draw, rectangle, loosely dashed, minimum height=3.5cm, minimum width=8cm}]
\node at (4.5,2.75) [dottedbox] {};
\node at (5,2.75) [blackbox] {};
\draw (1,1) -- (1,4.5);
\draw (2,1) -- (2,4.5);
\draw (3,1) -- (3,4.5);
\draw (7,1) -- (7,4.5);
\draw (8,1) -- (8,4.5);
\draw (4,3.5) -- (4,4.5);
\draw (5,3.5) -- (5,4.5);
\draw (6,3.5) -- (6,4.5);
\draw (4.5,1) -- (4.5,2);
\draw (5.5,1) -- (5.5,2);
\node at (2,5) {\huge $\alpha$};
\node at (2,0.5) {\huge $\alpha$};
\node at (5,5) {\huge $r_\epsilon$};
\node at (5,0.5) {\huge $r_{-\epsilon}$};
\node at (7.5,5) {\huge $\beta$};
\node at (7.5,0.5) {\huge $\beta$};
\end{tikzpicture}
}
\end{center}
Following \cite{Guba}, the rectangle in the centre of the picture is called a \emph{transistor}, and corresponds to the relation $r$ from the presentation, while the line segments in the digram are called \emph{wires}, with each wire labelled by a unique letter from the free monoid $A^*$.  
The monoid picture for the inverse $\bbe^{-1}$ of an edge is obtained by taking the vertical mirror image of the picture of $\bbe$. By ``stacking'' such pictures one on top of the other, and joining corresponding wires, we obtain pictures for arbitrary paths in the graph $\Gamma(\gp)$. 
See \cite{Guba} for a more formal and comprehensive treatment of monoid pictures. Specifically, see \cite[Definition~4.1]{Guba} for a formal definition of monoid picture. A formal treatment of pictures will not be required since here they will only be used to illustrate the geometry behind the arguments. 

The free monoid $A^*$ acts on $\Gamma(\gp)$ in a natural way via left and right multiplication, where for the edge $\bbe$ above, and any two words $\gamma, \delta \in A^*$ we define
\[
\gamma \cdot \bbe \cdot \delta = (\gamma\alpha, r, \epsilon, \beta\delta).
\]
These actions extend to paths in the obvious way. In terms of monoid pictures acting on the left by $\gamma$ and the right by $\delta$ corresponds to inserting a series of vertical wires to the left of the picture, corresponding to the letters of $\gamma$, and similarly on the right for $\delta$.

For every $r \in {\mathfrak{R}}$ and $\epsilon = \pm 1$ define
$
\bba_r^\epsilon = (1, r, \epsilon, 1).
$
We call such edges \emph{elementary}. Clearly elementary edges are in obvious correspondence with the relations of the presentation, and every edge of
$\Gamma(\gp)$ can be written uniquely in the form $\alpha \cdot \bba \cdot \beta$ where $\alpha, \beta \in A^*$ and $\bba$ is elementary.

Note that an edge in $\Gamma(\gp)$ corresponds to a single application of a rewriting rule, and so a path $\bbp$ corresponds to a deduction that the words $\iota \bbp$ and $\tau \bbp$ represent the same element of the monoid defined by the presentation. Thus $\Gamma(\gp)$ is called the \emph{derivation graph} of the presentation. Of course, there is a one-to-one correspondence between the elements of the monoid $S$ and the connected components of the derivation graph.

\begin{sloppypar}
To obtain the Squier complex $\mathcal{D}$ the idea is that we want to adjoin defining paths which correspond to non-overlapping applications of relations from $\mathfrak{R}$. That is, it may be that a word $w$ has two disjoint occurrences of rewriting rules in the sense that
$
w \equiv \alpha r_\epsilon \beta \alpha' r_{\epsilon'}' \beta'
$
where $\alpha, \beta, \alpha', \beta' \in A^*$, $r, r' \in \mathfrak{R}$ and $\epsilon, \epsilon' \in \{ -1, +1 \}$. Let $\bbe = (\alpha, r, \epsilon, \beta)$ and $\bbe' = (\alpha', r', \epsilon', \beta')$. Then the paths
\[
\bbp = (\bbe \cdot \iota \bbe')\circ (\tau \bbe \cdot \bbe'),
\quad
\bbp' = (\iota \bbe \cdot \bbe')\circ (\bbe \cdot \tau \bbe')
\]
give two different ways of rewriting the word $w \equiv \alpha r_\epsilon \beta \alpha' r_{\epsilon'}' \beta'$ to the word $w \equiv \alpha r_{-\epsilon} \beta \alpha' r_{-\epsilon'}' \beta'$, where in $\bbp$ we first apply the left hand relation and then the right hand, while in $\bbp'$ the relations are applied in the opposite order. In this situation we will say that the two edges in the path $\bbp$ are disjoint, and similarly for $\bbp'$. We want to regard these two paths as being essentially the same, and we  achieve this by adjoining the closed path $\bbp \bbp'^{-1}$ as a defining path. In terms of pictures we see that $\bbp$ may be transformed into $\bbp'$ by ``pushing down'' the transistor corresponding to $r$ on the left and ``pulling up'' the $r'$ transistor on the right. The intuitive idea is that since the rules do not overlap this may be done without the transistors, or wires, getting in each other's way. 
\end{sloppypar}

Thus, for any two edges $\bbe_1, \bbe_2$ in the graph we adjoin the defining path
\begin{equation}
\label{eqn_pullup_pushdown}
[\bbe_1,\bbe_2] =
(\bbe_1 \cdot \iota \bbe_2)\circ
(\tau \bbe_1 \cdot \bbe_2)\circ
(\bbe_1^{-1} \cdot \tau \bbe_2)\circ
(\iota \bbe_1 \cdot \bbe_2^{-1}).
\end{equation}
The left and right actions of $A^*$ on the graph $\Gamma(\gp)$ extend naturally to actions on the complex $\mathcal{D}$ via
\[
\alpha \cdot [\bbe_1, \bbe_2] \cdot \beta
=
[\alpha \cdot \bbe_1, \bbe_2 \cdot \beta] \quad (\alpha, \beta \in A^*).
\]

Now, given any set $X$ of closed paths in $\mathcal{D}$, we can form a new $2$-complex $\mathcal{D}^X$ by adjoining the additional defining paths $\alpha \cdot \bbp \cdot \beta, (\alpha, \beta \in A^*, \bbp \in X)$. We say that $X$ is a \emph{homotopy base} (or \emph{homotopy trivializer}) if all closed paths in $\mathcal{D}^X$ are null-homotopic. If two paths $\bbp, \bbq$ are homotopic in $\mathcal{D}$ we write $\bbp \sim_0 \bbq$, and we write $\bbp \sim_X \bbq$ if they are homotopic in $\mathcal{D}^X$.

\begin{defn}
\label{def_FDT}
We say that the monoid presentation $\gp$ has \emph{finite derivation type} (written $\mathrm{FDT}$ for short) if its Squier complex admits a finite homotopy base.
\end{defn}

The definition of $\mathrm{FDT}$ given above was first introduced for monoids by Squier in \cite{Squier1}. It was shown by Squier that if two finite presentations $\gp_1$ and $\gp_2$ define the same monoid (i.e. they are Tietze equivalent) then $\gp_1$ has FDT if and only if $\gp_2$ has $\mathrm{FDT}$. This allows us to talk of FDT monoids.

The following easy lemma will prove useful.

\begin{lem}[{\cite[Lemma~2.1]{Kobayashi2000}}]
\label{lem_homgen}
A set $X$ of closed paths in $\Gamma = \Gamma(\lb A | \mathfrak{R} \rb)$ is a homotopy base if and only if for any closed path $\bbp$ in $\Gamma$, there are $v_i, w_i \in A^*$, paths $\bbp_i$  in $\Gamma$ and $\bbq_i \in X \cup X^{-1}$, $i=1,\ldots,n$, $n \geq 0$, such that:
\[
\bbp
\sim_0
\bbp_1^{-1} \circ (v_1\cdot \bbq_1\cdot w_1) \circ \bbp_1 \circ \cdots \circ \bbp_n^{-1} \circ (v_n\cdot \bbq_n\cdot w_n) \circ \bbp_n.
\]
\end{lem}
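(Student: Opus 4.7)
Proof plan.

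The $(\Leftarrow)$ direction is immediate from the definitions: each factor $v_i \cdot \bbq_i \cdot w_i$ is a translate of a defining path of $\mathcal{D}^X$, and is hence null-homotopic in $\mathcal{D}^X$; its conjugate $\bbp_i^{-1} \circ (v_i \cdot \bbq_i \cdot w_i) \circ \bbp_i$ is then also null-homotopic in $\mathcal{D}^X$, and so is the whole product. Combined with the fact that any $\sim_0$-homotopy is a fortiori an $\sim_X$-homotopy, the hypothesised decomposition shows $\bbp \sim_X 1_{\iota \bbp}$; as $\bbp$ was arbitrary, $X$ is a homotopy base.

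For the main direction $(\Rightarrow)$, suppose $X$ is a homotopy base and let $\bbp$ be a closed path in $\Gamma$ based at some vertex $v$. By hypothesis $\bbp$ is null-homotopic in $\mathcal{D}^X$, so there is a finite sequence $\bbp = \bbr_0, \bbr_1, \ldots, \bbr_k = 1_v$ such that consecutive paths differ by a single elementary homotopy move in $\mathcal{D}^X$. Because every elementary move preserves the endpoints of the enclosing path, each $\bbr_j$ is again a closed path at $v$. The moves separate naturally into two classes: \emph{$\mathcal{D}$-moves}, meaning moves of type (i) or moves of type (ii) that use a defining path $[\bbe_1,\bbe_2]$ of $\mathcal{D}$, and \emph{$X$-moves}, meaning moves of type (ii) that use a translate of some element of $X$.

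The technical heart of the argument is the following local claim: every $X$-move $\bbr_j \to \bbr_{j+1}$ can be rewritten, modulo $\sim_0$, as the insertion of a single conjugated translate of an element of $X \cup X^{-1}$, namely
\[
\bbr_j \;\sim_0\; \bbp_j^{-1} \circ (v_j \cdot \bbq_j \cdot w_j) \circ \bbp_j \circ \bbr_{j+1}
\]
for suitable $v_j, w_j \in A^*$, $\bbq_j \in X \cup X^{-1}$, and some path $\bbp_j$ in $\Gamma$; while a $\mathcal{D}$-move gives simply $\bbr_j \sim_0 \bbr_{j+1}$. To verify the claim I would unpack move (ii): we may write $\bbr_j$ and $\bbr_{j+1}$ as differing only in a subpath, replacing $\pi$ by $\pi'$ where $\pi \circ \pi'^{-1}$ is a cyclic permutation $\bbb \circ \bba$ of a translate $\bba \circ \bbb = (\alpha \cdot \bbq \cdot \beta)^{\pm 1}$ with $\bbq \in X$. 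The elementary identity $\bbb \circ \bba \sim_0 \bba^{-1} \circ (\bba \circ \bbb) \circ \bba$, obtained by inserting $\bba^{-1} \circ \bba$ edge-by-edge via move (i), combined with the analogous cancellations $\pi'^{-1} \circ \pi' \sim_0 1$ and suffix cancellations outside the replacement site, expresses $\bbr_j$ in the required conjugated form, with $\bbp_j$ essentially $\bba$ composed with the inverse of the prefix of $\bbr_j$ that precedes the replacement site.

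Telescoping the local claim over $j = 0, \ldots, k-1$, absorbing all $\mathcal{D}$-moves silently into $\sim_0$-equivalence and exploiting $\bbr_k = 1_v$, produces the desired product expression for $\bbp$. The main obstacle is not conceptual but bookkeeping: one must track how the conjugating path $\bbp_j$ evolves as the telescoping proceeds, and derive the cyclic-permutation identity $\bbb \circ \bba \sim_0 \bba^{-1} \circ (\bba \circ \bbb) \circ \bba$ carefully enough from move (i) alone that each $X$-move in the original reduction contributes exactly one new factor of the prescribed shape in the final product.
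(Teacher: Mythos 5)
The paper does not prove this lemma---it is cited verbatim from Kobayashi (\cite[Lemma~2.1]{Kobayashi2000})---so there is no internal proof to compare against. Your argument is nonetheless correct: the $(\Leftarrow)$ direction follows since each conjugate of a translate $v_i\cdot\bbq_i\cdot w_i$ is null-homotopic in $\mathcal{D}^X$ and $\sim_0$ implies $\sim_X$; and for $(\Rightarrow)$ the key local identity $\bbr_j \sim_0 \bbp_j^{-1}\circ(v_j\cdot\bbq_j\cdot w_j)\circ\bbp_j\circ\bbr_{j+1}$ for an $X$-move, obtained by cancelling the common suffix, inserting $\bbc_1^{-1}\circ\bbc_1$ edge-by-edge to undo the cyclic permutation, and absorbing the common prefix $\bba$ into the conjugator $\bbp_j=\bbc_1\circ\bba^{-1}$, is exactly right, and telescoping it over the reduction sequence (with $\mathcal{D}$-moves contributing only $\sim_0$-equivalences) yields the stated decomposition with $\bbq_j$ ranging over $X\cup X^{-1}$ to accommodate the orientation $\pm1$ of each move.
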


The rest of this section is spent recalling some fundamental ideas from the structure theory of semigroups. For more details we refer the reader to \cite{Howie1}, or more recently \cite[Appendix~A]{SteinbergBook2009}.

\subsection*{Green's relations and Sch\"{u}tzenberger groups}

One obtains significant information about a semigroup by considering its ideal structure. Since their introduction in \cite{Green1}, Green's relations have
become a fundamental tool for describing the ideal structure of monoids and semigroups. 
The following result, the first part of which is usually referred to as Green's lemma, lists some well-known basic properties of $\gr$, $\gl$ and $\gh$ (see Section~\ref{sec_main_result} above for the definitions of the relations $\gr$, $\gl$ and $\gh$). For proofs of these basic facts we refer the reader to \cite[Chapter~2]{Howie1}.

\newpage

\begin{prop}
\label{prop_basic}
Let $S$ be a monoid.
\begin{enumerate}
\item[\emph{(i)}]
Let $s,t \in S$ with $s \gr t$, and let $p,q \in S$ such that $sp = t$ and $tq = s$.
Then the mapping $x \mapsto xp$ is a bijection from the $\gh$-class of $s$ to the $\gh$-class of $t$, and its inverse is the mapping $x \mapsto xq$.
\item[\emph{(ii)}]
If $s, p_1, p_2 \in S$ and $s p_1 p_2 \gr s$ then $sp_1 \gr s$.
\item[\emph{(iii)}]
The relation $\gr$ is a left congruence, i.e. for all $s, t_1, t_2 \in S$, $t_1 \gr t_2$ implies $st_1 \gr st_2$.
\item[\emph{(iv)}]
For all $s \in S$ the set $sS$ is a union of $\gr$-classes.
\end{enumerate}
The left-right dual statements hold for $\gl$-classes.
\end{prop}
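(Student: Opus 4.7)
The statement packages four standard facts: (i) is Green's Lemma, while (ii)--(iv) are one-step consequences of the definition $s\gr t \iff sS = tS$ (noting that $S$ is a monoid, so there is no need to adjoin an external identity). My plan is to dispose of (ii)--(iv) first as warm-ups, then concentrate on (i).

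For (ii), I would observe that $sp_1p_2 \gr s$ yields some $u \in S$ with $sp_1p_2u = s$; rebracketing as $sp_1(p_2u) = s$ gives $s \in sp_1S$, while $sp_1 \in sS$ is trivial, so $sp_1 \gr s$. For (iii), from $t_1 = t_2 a$ and $t_2 = t_1 b$ one obtains $st_1 = st_2 a$ and $st_2 = st_1 b$ by left-multiplication, so $st_1 \gr st_2$. For (iv), if $x = sa \in sS$ and $y \gr x$, then $y = xb = s(ab) \in sS$, so the $\gr$-class of $x$ is contained entirely in $sS$, i.e.\ $sS$ is a union of $\gr$-classes.

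For (i), the plan is to show that the right translations $\rho_p: x \mapsto xp$ and $\rho_q: x \mapsto xq$ are mutually inverse bijections between $H_s$ and $H_t$. The key identities $spq = s$ and $tqp = t$ follow at once from the hypotheses $sp=t$, $tq=s$. Restricting first to $\gl$-classes: any $x \in L_s$ has the form $x = us$ for some $u \in S$, so $xpq = uspq = us = x$, and symmetrically $yqp = y$ for $y \in L_t$. Hence $\rho_p$ and $\rho_q$ are mutually inverse as maps $L_s \to L_t$ and $L_t \to L_s$. To see that they restrict to $H_s \to H_t$ and $H_t \to H_s$, I would invoke the dual of (iii), namely that $\gl$ is a \emph{right} congruence, to get $x \gl s \Rightarrow xp \gl sp = t$; and then the identity $xpq = x$ shows $x \in xp\cdot S$, giving $xp \gr x$, so combined with $x \gr s \gr t$ we conclude $xp \gr t$. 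Hence $xp \in R_t \cap L_t = H_t$. The reverse inclusion is symmetric.

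The proposition involves no real obstacle; it is entirely definition-chasing. The only point worth flagging is the interplay in (i) between the two types of congruence properties: preservation of the $\gl$-relation under right multiplication (the dual of (iii)) is what places $xp$ in the correct $\gl$-class, while the explicit inverse identity $xpq = x$ is what places it in the correct $\gr$-class; combining these gives the $\gh$-class bijection.
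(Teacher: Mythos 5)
Your proof is correct and is the standard textbook argument; the paper itself does not prove Proposition~\ref{prop_basic} but simply cites Howie's book, so there is no in-paper proof to compare against. Parts (ii)--(iv) are handled exactly as one would expect from unwinding $s \gr t \iff sS = tS$, and your treatment of (i) correctly isolates the two ingredients: right-multiplication preserving $\gl$ (the dual of (iii)) pins down the target $\gl$-class, while the retraction identity $xpq = x$ (valid for all $x \in L_s$, since $x = us$ implies $xpq = uspq = us = x$) pins down the target $\gr$-class and simultaneously furnishes the two-sided inverse. One small stylistic point: in the $\gr$ step you could shorten slightly by noting $x = xpq \in xpS$ and $xp \in xS$ give $xS = xpS$ directly, without detouring through $x \gr s \gr t$; but your chain is equally valid.
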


Recall from Section~\ref{sec_main_result} that associated with every $\gh$-class $H$ of a monoid $S$ is a group $\gG(H)$, called the (right) Sch\"{u}tzenberger group of $H$. We now outline some of the basic properties of Sch\"{u}tzenberger groups that we shall need in what follows. For more details, and in particular for proofs of these basic facts, we refer the reader to \cite[Section~2.3]{Lallement1}.

\begin{prop}
Let $S$ be a monoid, let $H$ an be an $\gh$-class of $S$, and let $h_0 \in H$ be an arbitrary element. Then:
\begin{enumerate}
\item[\emph{(i)}]
$\mathrm{Stab}(H) = \{ s \in S: h_0 s \gh h_0 \}$;
\item[\emph{(ii)}]
$\sigma(H) = \{ (s,t) \in \mathrm{Stab}(H) \times \mathrm{Stab}(H) : h_0 s = h_0 t  \}$;
\item[\emph{(iii)}]
$H = h_0 \mathrm{Stab}(H)$.
\end{enumerate}
\end{prop}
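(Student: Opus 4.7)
The three claims are essentially standard consequences of Green's Lemma (part (i) of the preceding proposition), so the plan is to unpack each in turn and check that all the required translations between $\gr$/$\gl$-related elements are available.

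For part (i), one inclusion is immediate: if $s \in \mathrm{Stab}(H)$ then $Hs = H$ and in particular $h_0 s \in H$, so $h_0 s \gh h_0$. For the reverse inclusion, suppose $h_0 s \gh h_0$. In particular $h_0 \gr h_0 s$, and the witnessing equation $h_0 \cdot s = h_0 s$ lets one apply Green's Lemma to conclude that the map $x \mapsto xs$ is a bijection from the $\gh$-class of $h_0$ to the $\gh$-class of $h_0 s$, both of which are $H$. Therefore $Hs = H$, i.e.\ $s \in \mathrm{Stab}(H)$.

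For part (ii), the inclusion $\sigma(H) \subseteq \{(s,t)\in\mathrm{Stab}(H)^2 : h_0 s = h_0 t\}$ is just the definition of $\sigma$ specialized to $h=h_0$. The reverse inclusion is where one uses the left-right dual of Green's Lemma (or more primitively, the fact that $h \gl h_0$ means $h_0 \in Sh$ and $h \in Sh_0$, so $h = u h_0$ for some $u \in S$). If $s,t \in \mathrm{Stab}(H)$ satisfy $h_0 s = h_0 t$ and $h \in H$, write $h = u h_0$; then $hs = u h_0 s = u h_0 t = ht$. Since $h$ was arbitrary in $H$, we have $(s,t) \in \sigma(H)$.

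For part (iii), the forward inclusion is a restatement of part (i): if $s \in \mathrm{Stab}(H)$ then $h_0 s \gh h_0$, so $h_0 s \in H$. Conversely, given $h \in H$, the relation $h \gr h_0$ gives some $s \in S$ with $h = h_0 s$; then $h_0 s = h \gh h_0$, so by part (i) we have $s \in \mathrm{Stab}(H)$, and $h \in h_0 \mathrm{Stab}(H)$. In each of the three parts, the only nontrivial ingredient is Green's Lemma, and the whole argument is really a matter of carefully recording the standard bijections between $\gh$-classes within an $\gr$-class (or an $\gl$-class); there is no genuine obstacle, only bookkeeping.
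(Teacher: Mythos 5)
Your proof is correct, and all three parts use exactly the standard argument via Green's Lemma and the monoid identity (to guarantee $h \in hS$ and $h \in Sh$). The paper itself does not prove this proposition—it cites Lallement's book—so there is nothing to compare line-by-line; your self-contained verification matches the textbook route, and the only thing you leave slightly implicit in part (i) is the existence of a $q$ with $(h_0 s)q = h_0$, which is of course supplied by $h_0 \gr h_0 s$.
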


\section{A presentation for the Sch\"{u}tzenberger group}
\label{sec_presentation}

\sloppypar{
Modifying the approach used in \cite{Ruskuc2000}, in this section we give a presentation for an arbitrary Sch\"{u}tzenberger group of a monoid given by a presentation. The presentation we shall ultimately obtain (in Theorem~\ref{thm_thepresentation}) is very closely modelled on the presentation given in \cite[Theorem~4.2]{Ruskuc2000}. However, the presentation we give here is better, in the sense that the set of relations we take here is a subset of the set of relations given in the presentation in \cite[Theorem~4.2]{Ruskuc2000},
and there are circumstances where the presentation given here is finite, while the corresponding presentation obtained from \cite[Theorem~4.2]{Ruskuc2000} is infinite. Specifically, as explained in the comments immediately following the statement of Theorem~\ref{thm_main_result}, the presentation given in Theorem~\ref{thm_thepresentation} is finite when applied to the case of a maximal subgroup whose $\gr$-class has only finitely many $\gh$-classes. Consequently Theorem~\ref{thm_thepresentation} encompasses both of the results \cite[Corollary~4.3]{Ruskuc2000} and \cite[Corollary~3]{Ruskuc1999}. 
}

Once obtained, we shall go on to use our presentation below (in Section~\ref{sec_hbase}) to investigate homotopy bases and $\mathrm{FDT}$ for Sch\"{u}tzenberger groups---the main subject of this paper. 

Our approach is to use rewriting methods. An account of such methods in classical combinatorial group theory may be found in \cite[Section 2.3]{Magnus1}. Given a generating set, or presentation, or homotopy base for the monoid, our aim will always be to rewrite the given set into a corresponding set for the Sch\"{u}tzenberger group. Of course the hierarchy must be respected, we need the generating set before the presentation, and need the presentation before the homotopy base.

Let us now fix some definitions and notation that will remain in force throughout the rest of the article.
The exposition in this section follows closely that of \cite{Ruskuc2000} and, to the greatest extent possible, we adopt the same notation and conventions as in \cite{Ruskuc2000}.  

Let $S$ be a monoid given by a presentation $\gp=\lb A | \mathfrak{R} \rb$, and let $H$ be a fixed arbitrary $\gh$-class of $S$, and let $R$ be the $\gr$-class of $S$ that contains $H$. Set
\begin{align*}
 & T  =   \mathrm{Stab}(H)  =  \{  s \in S: Hs = H \}, \\
 & \sigma(H)  =  \{ (s,t) \in \mathrm{Stab}(H) \times \mathrm{Stab}(H) : hs = ht \ (\forall h \in H)  \},
\end{align*}
and let $\gG = \gG(H) = T / \sigma$ denote the (right) Sch\"{u}tzenberger group of $H$.  Let $h \in A^*$ be a fixed word representing some element of $H$.

Let $\{ H_\lambda : \lambda \in \Lambda \}$ be the set of all $\gh$-classes contained in $R$. For each $\lambda \in \Lambda$ fix words $p_\lambda, p_\lambda' \in A^*$ such that:
\[
H p_\lambda = H_{\lambda}, \quad
h_1 p_\lambda p_\lambda' = h_1, \quad
h_2 p_\lambda' p_\lambda = h_2 \quad
(\lambda \in \Lambda, \; h_1 \in H, \; h_2 \in H_\lambda).
\]
This is possible by Proposition~\ref{prop_basic}. Without loss of generality we assume that $\Lambda$ contains a distinguished element $1$, and that $H_1 = H$ and $p_1 \equiv p_1' \equiv 1$, where $1$ denotes the empty word.

By Proposition~\ref{prop_basic}, for every $\lambda \in \Lambda$ and every $s \in S$ either $H_{\lambda}s = H_\mu$, for some $\mu \in \Lambda$, or otherwise $H_{\lambda}s \cap R = \varnothing$, in which case  $H_{\lambda}st \cap R = \varnothing$ for all $t \in S$. Therefore the monoid $S$ acts on the set $\Lambda \cup \{ 0 \}$ via
\[
\lambda \cdot s =
\begin{cases}
\mu & \mbox{if $H_{\lambda} s = H_{\mu}$ with $\lambda, \mu \in \Lambda$,} \\
0 & \mbox{otherwise}.
\end{cases}
\]
Directly generalising the classical method for obtaining Schreier generators for a subgroup of a group, it is shown in \cite[Proposition~4.1]{Ruskuc1999} that the set
\[
\{ p_\lambda a p_{\lambda \cdot a}' / \sigma : \lambda \in \Lambda, \; a \in A, \; \lambda \cdot a \neq 0  \}
\]
generates the Sch\"{u}tzenberger group $\gG(H)$.  
It is with respect to the above generating set that we shall write a presentation for the  Sch\"{u}tzenberger group $\gG(H)$. With this in mind, we define an abstract set of letters:
\[
B = \{ b[\lambda,a] : \lambda \in \Lambda, \; a \in A, \; \lambda \cdot a \neq 0 \}
\]
and a homomorphism $\psi$ from the free monoid $B^*$ into $A^*$ by
\[
\psi: B^* \rightarrow A^*, \quad b[\lambda,a] \mapsto p_\lambda a p'_{\lambda \cdot a}.
\]
The mapping $\psi$ is called the \emph{representation mapping}.
Also, define a mapping:
\[
\phi: \{ (\lambda,w) \in \Lambda \times A^* \; : \; \lambda \cdot w \neq 0 \}   \
\rightarrow
B^*,
\]
inductively by
\begin{equation}
\phi(\lambda, 1) = 1, \quad
\phi(\lambda, aw) = b[\lambda,a] \phi(\lambda \cdot a, w).
\end{equation}
The mapping $\phi$ is called the \emph{rewriting mapping}. The definition of $\phi$ is motivated by the Schreier rewriting process given in the proof of \cite[Proposition~4.1]{Ruskuc1999}.

Given $\lambda \in \Lambda$ and $a \in A$ such that $\lambda \cdot a \neq 0$ we have that the word
$h p_\lambda a p_{\lambda \cdot a}'$ represents and element of $H$ and so we may choose and fix a word $\pi(b[\lambda,a]) \in A^*$ such that
\[
h p_\lambda a p_{\lambda \cdot a}'
=
\pi(b[\lambda,a])h
\]
in $S$. Then we extend the mapping $b[\lambda,a] \mapsto \pi(b[\lambda,a])$ to a homomorphism $\pi: B^* \rightarrow A^*$.

Recall that the relation $\gr$ is a left congruence on $S$. As explained in Section~\ref{sec_intro}, it follows that there is a natural left action $(s,R') \mapsto s * R'$ of $S$ on the set $S / \gr$ of all $\gr$-classes given by:
\[
s * R' = R'' \Leftrightarrow sR' \subseteq R'' \ (s \in S, R', R'' \in S / \gr).
\]
Let $\{ R_i : i \in I \}$ be the inverse orbit of $R$ under this action, i.e. let it be the set:
\[
\{ R' \in S / \gr : (\exists s \in S)(s * R' = R)  \}.
\]
The action of $S$ on $S / \gr$ induces a partial action on $\{ R_i : i \in I \}$ which in turn translates into an action of $S$ on the set $I \cup \{ 0 \}$ given by:
\[
s * i
=
\begin{cases}
j & \mbox{if $i,j \in I$ and $s * R_i = R_j$}, \\
0 & \mbox{otherwise}.
\end{cases}
\]
For each $i \in I$ choose a word $r_i \in A^*$ representing an element of the $\gr$-class $R_i$. Suppose, without loss of generality, that there are two distinguished elements $1, \omega \in I$ such that:
\[
1_S \in R_1, \quad
r_1 \equiv 1, \quad
R = R_{\omega}, \quad
r_{\omega} \equiv h.
\]
For every $a \in A$ and $i \in I$, if $a * i \neq 0$ then $ar_i \in a * R_i = R_{a * i}$. Therefore we can choose words $\kappa(a,i) \in A^*$ such that the relations:
\[
ar_i = r_{a * i} \kappa(a,i) \quad
(a \in A, \; i \in I)
\]
hold in $S$. We extend this to a mapping:
\[
\kappa :
\{ (w,i) \in A^* \times I : w * i \neq 0 \}
\rightarrow
A^*
\]
inductively by
\[
\kappa(1,i) = 1, \quad
\kappa(wa,i) = \kappa(w,a*i) \kappa(a,i).
\]
Up to this point 
in this section
we have followed exactly the development of \cite[Section~4]{Ruskuc1999}. The next step is where we introduce a change.

Next, let $e \in A^*$ be a word representing an element from $\mathrm{Stab}(H)$ such that $e / \sigma = 1$ in $\gG(H)$, in other words $e$ is a word representing an element in the pointwise stabilizer of $H$ under the action of $S$ on itself via right multiplication.
Note that in general $e$ \emph{need not be an idempotent}.
Such an $e$ clearly exists, since one may always take $e$ to be the empty word.
Indeed, if one were to choose $e\equiv 1$ then the rest of the exposition below becomes identical to that of \cite[Section~4]{Ruskuc1999}. Without loss of generality, suppose that $I$ contains a distinguished element $\eta$ such that
\[
e \in R_{\eta}, \quad
r_\eta \equiv e.
\]
Now let $\{ R_k : k \in K \}$ be the orbit of $R_\eta$ under the left action of $S$, i.e. let it be the set $\{ s * R_\eta : s \in S \} $. Then set $J = I \cap K$ noting that $\omega \in J$ since $he= h$. Clearly the action of $S$ on $I \cup \{ 0 \}$ restricted to the set $J \cup \{ 0 \}$ gives rise to a well-defined action of $S$ on the set $J \cup \{ 0 \}$ (in other words, $J \cup \{ 0 \}$ is a subact of $I \cup \{ 0 \}$).

We are now in a position to give a presentation for the Sch\"{u}tzenberger group $\gG$.
\begin{thm}
\label{thm_thepresentation}
With the above notation the Sch\"{u}tzenberger group $\gG$ of $H$ is defined by the presentation with generators $B$ and relations:\\
\begin{tabular}{lc}
$\begin{array}{l}
(\mathrm{R1}) \\
(\mathrm{R2}) \\
(\mathrm{R3}) \\
(\mathrm{R4})
\end{array}$ &
$\begin{array}{ll}
 \phi(\lambda, u) = \phi(\lambda,v)
& (\lambda \in \Lambda, \; (u=v) \in \mathfrak{R}, \; \lambda \cdot u \neq 0), \\
 \phi(\lambda, \kappa(u,j)) = \phi(\lambda, \kappa(v,j))
&  (\lambda \in \Lambda, \; j \in J, \; (u=v) \in \mathfrak{R}, \; H_{\lambda} \subseteq Sr_{u * j}), \\
  b[\lambda,a] = \phi(1, \kappa( \pi( b[\lambda,a]), \omega))
&  (\lambda \in \Lambda, \; a \in A, \; \lambda \cdot a \neq 0),  \\
  1 = \phi(1, \kappa(h,\eta)).
\end{array}$
\end{tabular}
\end{thm}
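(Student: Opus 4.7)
The plan is to prove the theorem in two stages: \emph{soundness}, that the relations (R1)--(R4) hold in $\gG$ under the assignment $b[\lambda,a] \mapsto p_\lambda a p'_{\lambda\cdot a}/\sigma$; and \emph{completeness}, that these relations are sufficient to present $\gG$.

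\textbf{Soundness.} Each of (R1)--(R4) should follow from a direct calculation in $S$ using four elementary identities. A straightforward induction on $|w|$ gives $h\,\psi(\phi(\lambda,w)) = h\, p_\lambda w\, p'_{\lambda\cdot w}$ in $S$ whenever $\lambda\cdot w \neq 0$. A second induction on $|w|$ (using the definition of $\kappa$) gives the orbit identity $w r_i = r_{w*i}\,\kappa(w,i)$ in $S$ whenever $w*i \neq 0$. The definition of $\pi$ gives $h\,p_\lambda a p'_{\lambda\cdot a} = \pi(b[\lambda,a])\,h$. Finally $e/\sigma=1$ by choice of $e$, so in particular $he = h$ in $S$. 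Combining these and using that $h w_1 = h w_2$ in $S$ iff $w_1/\sigma = w_2/\sigma$ in $\gG$ (for $w_1,w_2 \in T$), one checks (R1) by reducing both sides to $h p_\lambda u p'_{\lambda\cdot u}= h p_\lambda v p'_{\lambda \cdot v}$; (R2) by combining the $\kappa$-identity with $u=v$ in $S$; (R3) by unpacking the $\psi\phi$ formula on the composition $\kappa(\pi(b[\lambda,a]),\omega)$ and using the definition of $\pi$; and (R4) by computing $h\,\psi(\phi(1,\kappa(h,\eta))) = h\cdot h \cdot e = h$ in $S$.

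\textbf{Completeness.} Here I would mirror the Reidemeister--Schreier style rewriting argument that underlies \cite[Theorem~4.2]{Ruskuc2000}, which delivers essentially the same presentation but with the range of $j$ in (R2) equal to all of $I$ rather than the subset $J$. A convenient route is to regard the presentation $\lb B \mid (\mathrm{R1})\text{--}(\mathrm{R4})\rb$ as a quotient target and produce, from a word $w \in A^*$ representing an element of $T$ that is $\sigma$-equivalent to the identity (equivalently, $hw = h$ in $S$), a derivation reducing $\phi(1,w)$ to the empty word. Every elementary rewrite of $w$ by a relation in $\mathfrak{R}$ translates either to an (R1)-move (when the relation is applied inside a Schreier factor $p_\lambda\cdots p'_{\lambda\cdot a}$) or to a move expressible via (R2) and (R3) (when the relation is applied across generators and must be transported through the orbit data using $\kappa$).

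\textbf{Main obstacle.} The essential novelty, and the heart of the work, is showing that in (R2) one may restrict from $j \in I$ to $j \in J = I \cap K$. The two ingredients that make this restriction viable are, first, that $J\cup\{0\}$ is a subact of $I\cup\{0\}$ under the left action of $S$ (so once inside $J$ we remain there), and second, that $\omega \in J$ because $he = h$. Consequently every index that arises from tracking the Schreier rewriting of a word in $T$, anchored at $\omega$, stays in $J$; any auxiliary index in $I\setminus J$ that might appear in an intermediate $\kappa$-calculation can be ``shuttled back'' into $J$ by inserting the trivial word $\phi(1,\kappa(h,\eta))$ using (R4), which plays exactly the role that the tautology $\kappa(1_S,\eta)=1$ plays in the $e\equiv 1$ case of \cite{Ruskuc2000}. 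Verifying carefully that each step of the Ruskuc argument survives this restriction---equivalently, that the ``missing'' (R2)-relations with $j \in I \setminus J$ are derivable from (R1), (R3), (R4)---is the bulk of the remaining work, and is what must be executed to turn this plan into a complete proof.
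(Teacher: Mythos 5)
Your two-stage structure (soundness, then completeness via a Schreier-style rewriting) matches the paper's, and you have correctly isolated the key structural fact: $J\cup\{0\}$ is a subact of $I\cup\{0\}$, and both $\omega$ and $\eta$ lie in $J$, anchored by $he=h$. But the final paragraph misidentifies the technical crux and proposes a step that is neither needed nor what the paper does. The paper's completeness argument (Lemma~\ref{lem_5.10'}) does not derive the ``missing'' (R2)-instances with $j\in I\setminus J$, nor does it shuttle stray indices back into $J$. Rather, it uses the decomposition
$\kappa(\alpha u\beta,\eta)\equiv \kappa(\alpha,u\beta*\eta)\,\kappa(u,\beta*\eta)\,\kappa(\beta,\eta)$
from Lemma~\ref{Ruskuc_Lemma5.2}(i), together with $\alpha u\beta*\eta=\omega\ne 0$, to conclude that \emph{every} index that appears ($\beta*\eta$ and $u\beta*\eta$) lies in $J$ automatically: since $\eta\in J$ and $J\cup\{0\}$ is a subact, one never leaves $J$. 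The single application of an (R2)-relation that occurs in the middle factor therefore has $j=\beta*\eta\in J$; no auxiliary index in $I\setminus J$ ever arises, and there is nothing to derive or to transport. The genuinely delicate part of that lemma is instead verifying the hypothesis $H_{\lambda}\subseteq Sr_{u*j}$ for the relevant $\lambda=1\cdot\kappa(\alpha,u\beta*\eta)$, which uses that $\gr$ is a left congruence. Finally, (R4) does not play an index-transport role; its role is to serve as the base case of the induction in Lemma~\ref{lem_5.9'} and to absorb the word $\phi(1,\kappa(h,\eta))$ that is no longer automatically trivial once $e$ need not be the empty word. A minor slip in your soundness check: $h\,\psi\phi(1,\kappa(h,\eta))$ does not equal $h\cdot h\cdot e$; using Lemmas~\ref{Ruskuc_Lemma5.1}(ii) and \ref{Ruskuc_Lemma5.2}(ii) one gets $h\,\psi\phi(1,\kappa(h,\eta))=h\,\kappa(h,\eta)=he=h$, which is what is needed.
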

The above presentation is finite provided $\Lambda$, $J$, $A$, and $\mathfrak{R}$ are all finite. Therefore Theorem~\ref{thm_main_result}(i) is an immediate corollary of Theorem~\ref{thm_thepresentation}.

\subsection*{Proof of Theorem~\ref{thm_thepresentation}}
The rest of this section will be devoted to proving Theorem~\ref{thm_thepresentation}.
The result is proved by refining slightly the arguments given in \cite{Ruskuc2000}, and in many cases lemmas from that paper can (and will) simply be quoted. One must also perform a slight notation translation, since we use $\kappa$ here where $\tau$ is used in \cite{Ruskuc2000}.

The proof has two parts: first we show that all of the relations (R1)--(R4) hold in $\mathcal{G}$, and then we show that they suffice to define $\mathcal{G}$. 

Recall that a generator $b[\lambda,a]$ from $B$ represents the element $({p_\lambda a p_{\lambda \cdot a}'}) / \sigma = {\psi(b[\lambda,a])} / \sigma$ of the group $\gG$. So given $\alpha, \beta \in B^*$ the relation $\alpha = \beta$ holds in $\gG$ if and only if ${\psi(\alpha)} / \sigma = {\psi(\beta)} / \sigma$ which by the definition of Sch\"{u}tzenberger group is equivalent to $h \psi(\alpha) = h \psi(\beta)$ holding in $S$. 

Since many of the definitions given above come from \cite{Ruskuc2000} we may now quote some lemmas from \cite{Ruskuc2000}, without modification, that will prove useful. 
The first two such lemmas tell us that the maps $\phi$ and $\kappa$ are both very nicely behaved, in the sense  that each of them behaves like a homomorphism.
\begin{lem}[{\cite[Lemma~5.1]{Ruskuc2000}}]\label{Ruskuc_Lemma5.1} \noindent

\begin{enumerate}
\item[\emph{(i)}]
For every $w_1, w_2 \in A^*$ and every $\lambda \in \Lambda$ such that $\lambda \cdot w_1 w_2 \neq 0$ we have
\[
\phi(\lambda, w_1w_2) \equiv
\phi(\lambda, w_1) \phi(\lambda \cdot w_1, w_2).
\]
\item[\emph{(ii)}]
For every $w \in A^*$ and every $\lambda \in \Lambda$ such that $\lambda \cdot w \neq 0$ the relation
\[
h \psi \phi(\lambda, w) = h p_{\lambda} w p_{\lambda \cdot w}'
\]
holds in $S$.
\item[\emph{(iii)}]
If $w_1, w_2 \in A^*$ are such that the relation $w_1 = w_2$ holds in $S$, and if $\lambda \in \Lambda$ is such that $\lambda \cdot w_1 \neq 0$, then the relation $\phi(\lambda, w_1) = \phi(\lambda, w_2)$ is a consequence of the relations \rm{(R1)}-\rm{(R4)}.
\end{enumerate}
\end{lem}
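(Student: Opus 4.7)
Each of (i)--(iii) admits a short inductive proof, and I would establish them in the stated order so that later parts may invoke earlier ones. For (i), the plan is a direct induction on $|w_1|$. If $w_1 \equiv 1$, both sides collapse to $\phi(\lambda, w_2)$ since $\phi(\lambda,1) \equiv 1$ by definition. If $w_1 \equiv a w_1'$ for some $a \in A$, I unfold the recursive clause $\phi(\lambda, a w_1' w_2) \equiv b[\lambda,a]\,\phi(\lambda\cdot a,\, w_1' w_2)$ on the left, apply the inductive hypothesis at parameter $\lambda\cdot a$ to split the tail, and re-fold using $\lambda\cdot a w_1' = (\lambda\cdot a)\cdot w_1'$.

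For (ii), again I would induct on $|w|$. The base case $w\equiv 1$ reduces to the identity $h p_\lambda p_\lambda' = h$, which holds by the choice of $p_\lambda, p_\lambda'$ (see Proposition~\ref{prop_basic}). For the step $w \equiv aw'$, unfolding $\phi$ and $\psi$ gives
\[
h\psi\phi(\lambda, aw') \;=\; h p_\lambda a p_{\lambda\cdot a}' \cdot \psi\phi(\lambda\cdot a, w').
\]
Next I would move $\pi(b[\lambda,a])$ past $h$ via its defining equation $h p_\lambda a p_{\lambda\cdot a}' = \pi(b[\lambda,a])\, h$, then apply the inductive hypothesis to rewrite $h\psi\phi(\lambda\cdot a, w')$ as $h p_{\lambda\cdot a} w' p_{\lambda\cdot aw'}'$, reimport the $\pi$-relation backwards, and finally collapse $p_{\lambda\cdot a}' p_{\lambda\cdot a}$ against the element $h p_\lambda a \in H_{\lambda\cdot a}$ using $h_2 p_{\lambda\cdot a}' p_{\lambda\cdot a} = h_2$. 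The end result is $h p_\lambda a w' p_{\lambda\cdot aw'}'$, as required.

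For (iii), the plan is induction on the length of an elementary rewriting derivation from $w_1$ to $w_2$ over $\mathfrak{R}\cup \mathfrak{R}^{-1}$. A single step $\alpha u\beta \to \alpha v\beta$ with $(u=v)\in\mathfrak{R}$ is handled by applying (i) twice to split
\[
\phi(\lambda, \alpha u\beta) \;\equiv\; \phi(\lambda,\alpha)\,\phi(\lambda\cdot\alpha, u)\,\phi(\lambda\cdot\alpha u, \beta),
\]
and analogously for $\alpha v\beta$; since $u = v$ in $S$, one has $\lambda\cdot\alpha u = \lambda\cdot\alpha v$, so the outer factors agree identically, while relation (R1) at parameter $\lambda\cdot\alpha$ rewrites the middle factor to $\phi(\lambda\cdot\alpha, v)$. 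The only bookkeeping issue---and the one point requiring a little care---is verifying the non-triviality condition $\lambda\cdot w'\ne 0$ at every intermediate word $w'$ in the derivation, so that both $\phi(\lambda, w')$ is defined and (R1) is legitimately applicable; but this is automatic, since $w' = w_1$ in $S$ forces $\lambda\cdot w' = \lambda\cdot w_1 \ne 0$, and in particular $(\lambda\cdot\alpha)\cdot u \ne 0$ at each step.
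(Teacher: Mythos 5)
The paper does not give its own proof of this lemma; it is quoted verbatim as Lemma~5.1 of Ru\v{s}kuc (2000). Your three inductions are correct, and they are the natural arguments one would give: (i) is a straight unfolding of the recursive definition of $\phi$ together with $(\lambda\cdot a)\cdot w_1'=\lambda\cdot aw_1'$; (ii) succeeds because the action factors through $S$, so $\lambda\cdot a\neq 0$ and $(\lambda\cdot a)\cdot w'\neq 0$ follow from $\lambda\cdot aw'\neq 0$, and the cancellation $h p_\lambda a\,p_{\lambda\cdot a}'p_{\lambda\cdot a}=h p_\lambda a$ is licensed by $h p_\lambda a\in H_{\lambda\cdot a}$; (iii) is exactly the splitting you describe, and your observation that $w'=w_1$ in $S$ forces $\lambda\cdot w'=\lambda\cdot w_1\neq 0$ (hence $\lambda\cdot\alpha$ and $(\lambda\cdot\alpha)\cdot u$ are nonzero, since $0$ is absorbing) is the right way to discharge the well-definedness obligations.

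One small stylistic remark on (ii): your detour through $\pi$ -- rewriting $h p_\lambda a p_{\lambda\cdot a}'$ as $\pi(b[\lambda,a])h$, applying the induction with the same $h$, then reimporting -- is one of at least two equally good ways to keep the hypothesis anchored at the fixed word $h$. An alternative is to state the inductive claim for an arbitrary element of $H$ (the defining identities for $p_\lambda,p_\lambda'$ hold for all $h_1\in H$, $h_2\in H_\lambda$, so nothing is lost), in which case one applies the hypothesis directly with $h':=h p_\lambda a p_{\lambda\cdot a}'\in H$ and no $\pi$ is needed. Both give the same computation; I flag it only because your route quietly uses that $\pi(b[\lambda,a])h$ is a genuine equality in $S$, which the paper fixes by choice when defining $\pi$, so it is legitimate.
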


\begin{lem}[{\cite[Lemma~5.2]{Ruskuc2000}}]\label{Ruskuc_Lemma5.2} \noindent \begin{enumerate}
\item[\emph{(i)}]
For every $w_1, w_2 \in A^*$ and $j \in J$ such that $w_1 w_2 * j \neq 0$ we have
\[
\kappa(w_1 w_2, j)
\equiv
\kappa(w_1, w_2 * j) \kappa(w_2, j).
\]
\item[\emph{(ii)}]
For every $w \in A^*$ and $j \in J$ such that $w * j \neq 0$ the relation
\[
w r_j = r_{w * j} \kappa(w,j)
\]
holds in $S$.
\end{enumerate}
\end{lem}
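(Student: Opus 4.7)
The plan is to prove both parts by straightforward induction on word length, using the inductive definition
\[
\kappa(1,i) \equiv 1, \quad \kappa(wa,i) \equiv \kappa(w, a*i)\, \kappa(a,i)
\]
together with associativity of the left action $*$ of $S$ on $I \cup \{0\}$ (which gives $u v * i = u * (v * i)$ for all $u,v \in A^*$ whenever the relevant values are nonzero). Part (i) is a purely formal statement about words and requires no information about $S$; part (ii) is the corresponding ``semantic'' statement in $S$, and will be deduced from the single-letter defining relations $a r_i = r_{a*i}\, \kappa(a,i)$ used to introduce $\kappa$ in the first place.

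For part (i), I would induct on $|w_2|$. The base case $w_2 \equiv 1$ is immediate since $\kappa(w_1 \cdot 1, j) \equiv \kappa(w_1, j) \equiv \kappa(w_1, 1 * j)\, \kappa(1, j)$. For the inductive step, write $w_2 \equiv w_2' a$ with $a \in A$. The $\kappa$-recurrence strips off the trailing letter:
\[
\kappa(w_1 w_2' a, j) \equiv \kappa(w_1 w_2',\, a * j)\, \kappa(a, j).
\]
Associativity of the action gives $w_1 w_2' * (a * j) = w_1 w_2 * j \neq 0$, so the inductive hypothesis applies to $\kappa(w_1 w_2', a * j)$ and yields
\[
\kappa(w_1 w_2',\, a * j) \equiv \kappa\bigl(w_1,\, w_2' * (a * j)\bigr)\, \kappa(w_2',\, a * j) \equiv \kappa(w_1,\, w_2 * j)\, \kappa(w_2',\, a * j).
\]
Substituting back and collapsing $\kappa(w_2', a * j)\, \kappa(a, j) \equiv \kappa(w_2' a, j) \equiv \kappa(w_2, j)$ by one further application of the recurrence gives the desired identity.

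For part (ii), I would induct on $|w|$. The base case $w \equiv 1$ is trivial since $1 \cdot r_j = r_j$ and $r_{1 * j}\, \kappa(1, j) \equiv r_j$. For the inductive step, write $w \equiv w' a$ with $a \in A$; the single-letter defining relation $a r_j = r_{a * j}\, \kappa(a, j)$ (which holds in $S$ by construction, for each $a \in A$ and $j \in J \subseteq I$ with $a*j \neq 0$) gives
\[
w\, r_j \;=\; w'(a r_j) \;=\; w'\, r_{a * j}\, \kappa(a, j) \quad \text{in } S.
\]
Apply the inductive hypothesis to $w'$ and $a * j$ (valid since $w' * (a * j) = w * j \neq 0$) to obtain $w'\, r_{a * j} = r_{w * j}\, \kappa(w', a * j)$ in $S$, and then recombine via the recurrence $\kappa(w', a * j)\, \kappa(a, j) \equiv \kappa(w' a, j) \equiv \kappa(w, j)$ to reach $w\, r_j = r_{w * j}\, \kappa(w, j)$.

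There is no serious obstacle; the only thing that demands care is bookkeeping the action so that every intermediate expression $u * i$ invoked in the induction is genuinely nonzero (so that $\kappa$ is defined there), and this is guaranteed by the standing hypothesis $w_1 w_2 * j \neq 0$ (resp.\ $w * j \neq 0$) together with the associativity identity $uv * i = u * (v * i)$. Both parts are essentially parallel to Lemma~\ref{Ruskuc_Lemma5.1}(i),(ii), with the action of $S$ on $I \cup \{0\}$ (left action on $\gr$-classes) playing the role that the action of $S$ on $\Lambda \cup \{0\}$ (action on $\gh$-classes inside $R$) played there.
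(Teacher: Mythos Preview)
Your proof is correct. Note that the paper does not actually supply a proof of this lemma: it is quoted verbatim from \cite{Ruskuc2000} (as indicated in the attribution), and the surrounding text explicitly says these lemmas are being cited without modification. Your induction on the length of $w_2$ (for part (i)) and of $w$ (for part (ii)), peeling off the rightmost letter to invoke the recursive clause $\kappa(wa,i) \equiv \kappa(w,a*i)\kappa(a,i)$, is exactly the expected argument given the inductive definition of $\kappa$, and your bookkeeping of the nonzero conditions via associativity of the $*$-action is handled correctly.
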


The next lemma shows how $\psi$ and $\pi$ are related.

\begin{lem}[{\cite[Lemma~5.3]{Ruskuc2000}}]
\label{Ruskuc_Lemma5.3}
For every $w \in B^*$ the relation
\[
h \psi(w) = \pi(w) h
\]
holds in $S$.
\end{lem}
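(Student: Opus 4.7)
The plan is to argue by induction on the length $|w|$ of the word $w \in B^*$. The base case $w \equiv 1$ is immediate, since $\psi$ and $\pi$ are monoid homomorphisms into $A^*$ and so $\psi(1) \equiv \pi(1) \equiv 1$, giving $h\psi(1) \equiv h \equiv \pi(1)h$. For the single-letter case $w \equiv b[\lambda,a]$ the identity $h\psi(b[\lambda,a]) = \pi(b[\lambda,a])\,h$ holds in $S$ essentially by definition: by construction, $\pi(b[\lambda,a])$ was chosen precisely as a word in $A^*$ satisfying $h p_{\lambda} a p'_{\lambda \cdot a} = \pi(b[\lambda,a])\,h$, and $\psi(b[\lambda,a]) \equiv p_{\lambda} a p'_{\lambda \cdot a}$.

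For the inductive step, suppose the result is established for words of length less than $n \geq 2$, and write $w \equiv w' \cdot b$ with $b \equiv b[\lambda,a] \in B$ and $w' \in B^*$ of length $n-1$. Using that $\psi$ and $\pi$ are homomorphisms, we compute in $S$:
\[
h\,\psi(w) \;=\; h\,\psi(w')\,\psi(b) \;=\; \pi(w')\,h\,\psi(b) \;=\; \pi(w')\,\pi(b)\,h \;=\; \pi(w)\,h,
\]
where the second equality uses the induction hypothesis on $w'$ and the third uses the single-letter case applied to $b$. This completes the induction. I do not expect any genuine obstacle here: the lemma is essentially bookkeeping, asserting that the identities $h\,\psi(b) = \pi(b)\,h$ built into the definition of $\pi$ on generators propagate to all of $B^*$ because both $\psi$ and $\pi$ are homomorphisms and $h$ can be ``passed through'' one generator at a time.
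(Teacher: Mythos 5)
Your proof is correct and is essentially the only reasonable argument: the lemma is the cited \cite[Lemma~5.3]{Ruskuc2000}, which is proved by exactly this induction on $|w|$, using the defining identity $h p_\lambda a p'_{\lambda\cdot a} = \pi(b[\lambda,a])\,h$ for generators together with the homomorphism property of $\psi$ and $\pi$ to pass $h$ through one letter at a time.
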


The word $\pi(\omega)$ has certain special properties, as the following result shows.

\begin{lem}[{\cite[Lemma~5.4]{Ruskuc2000}}]
\label{Ruskuc_Lemma5.4}
For every $w \in B^*$ we have
\begin{enumerate}
\item[\emph{(i)}]
$\pi(w) * \omega = \omega$; and
\item[\emph{(ii)}]
$1 \cdot \kappa(\pi(w), \omega) = 1$.
\end{enumerate}
\end{lem}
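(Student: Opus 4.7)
The plan is to reduce both parts of the lemma to a single key observation: for every $w \in B^*$, the element of $S$ represented by $\psi(w)$ satisfies $H\psi(w) \subseteq H$. Once this is established, parts (i) and (ii) follow by short arguments using Lemmas~\ref{Ruskuc_Lemma5.3} and \ref{Ruskuc_Lemma5.2}.

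To prove the key observation I would argue by induction on $|w|$. The base case $w\equiv 1$ is immediate since $\psi(1)\equiv 1$. Because the set $\{s\in S: Hs\subseteq H\}$ is a submonoid of $S$ and $\psi$ is a homomorphism, it suffices to handle a single generator $b[\lambda,a]\in B$, for which one simply computes
\[
H\psi(b[\lambda,a]) = H p_\lambda\, a\, p'_{\lambda\cdot a} = H_\lambda\, a\, p'_{\lambda\cdot a} = H_{\lambda\cdot a}\, p'_{\lambda\cdot a} = H,
\]
using $Hp_\lambda = H_\lambda$, the definition of the action $\lambda\cdot a$ (we are assuming $\lambda\cdot a\neq 0$), and the fact that right multiplication by $p'_\mu$ gives a bijection $H_\mu\to H$ for every $\mu\in\Lambda$. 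This last fact comes from Proposition~\ref{prop_basic}(i) applied with $s=h$, $t=hp_\mu$, $p=p_\mu$, $q=p'_\mu$, using the identities $h_1p_\mu p'_\mu = h_1$ and $h_2 p'_\mu p_\mu = h_2$ built into the choice of $p_\mu, p'_\mu$.

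With the key observation in hand, part (i) is immediate: by Lemma~\ref{Ruskuc_Lemma5.3}, $\pi(w)h = h\psi(w)\in H\subseteq R=R_\omega$. Hence $\pi(w)R_\omega\cap R_\omega\neq\emptyset$, and since $\gr$ is a left congruence (Proposition~\ref{prop_basic}(iii)) the entire set $\pi(w)R_\omega$ lies in a single $\gr$-class, so $\pi(w)*R_\omega = R_\omega$, i.e., $\pi(w)*\omega=\omega$.

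For part (ii), since $\pi(w)*\omega=\omega\neq 0$ the expression $\kappa(\pi(w),\omega)$ is defined, and Lemma~\ref{Ruskuc_Lemma5.2}(ii) gives the relation $\pi(w)r_\omega = r_\omega \kappa(\pi(w),\omega)$ in $S$, i.e., $\pi(w)h = h\kappa(\pi(w),\omega)$. Combining with Lemma~\ref{Ruskuc_Lemma5.3} yields $h\kappa(\pi(w),\omega) = h\psi(w)\in H$. Therefore $H\kappa(\pi(w),\omega)\cap R\neq\emptyset$, so by the definition of the action of $S$ on $\Lambda\cup\{0\}$ we have $H\kappa(\pi(w),\omega) = H_\mu$ for some $\mu\in\Lambda$; since $h\psi(w)\in H = H_1$ we must have $\mu=1$, giving $1\cdot\kappa(\pi(w),\omega)=1$. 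The only step in the whole argument requiring genuine content is the key observation; everything else is a clean chain of substitutions into the quoted lemmas.
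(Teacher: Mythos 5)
Your proof is correct. The paper does not reproduce a proof of this lemma (it is quoted from Ru\v{s}kuc's paper), but your argument is sound and follows what is essentially the only natural route: establish that $\psi(w)$ represents an element of $\mathrm{Stab}(H)$ (your ``key observation''), deduce via Lemma~\ref{Ruskuc_Lemma5.3} that $\pi(w)h = h\psi(w)\in H\subseteq R_\omega$ for part~(i), and then feed this into Lemma~\ref{Ruskuc_Lemma5.2}(ii) together with the disjointness of $\gh$-classes for part~(ii). The induction on the generators of $B^*$ for the key observation, using Green's Lemma (Proposition~\ref{prop_basic}(i)) to see that $x\mapsto xp'_\mu$ is a bijection $H_\mu\to H$, is exactly right, and you correctly note where the standing assumption $\lambda\cdot a\neq 0$ is used. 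One small remark: you state the key observation as $H\psi(w)\subseteq H$ but your generator computation actually yields the equality $H\psi(b[\lambda,a]) = H$, so $\psi(w)$ in fact represents an element of $\mathrm{Stab}(H)$; either formulation suffices for what follows, but the stronger one is what Proposition~4.1 of the earlier Ru\v{s}kuc paper implicitly relies on when it asserts that the $b[\lambda,a]$ generate $\gG(H)$.
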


Lemmas~5.5, 5.6 and 5.7 from \cite{Ruskuc2000} all hold verbatim, and they will not be restated here. They simply state that the relations (22)-(24), which in particular include our relations (R1)-(R3),  all hold in the Sch\"{u}tzenberger group $\gG$.

Each of \cite[Lemmas~5.8--5.11]{Ruskuc2000} require some modification for our purposes, which we now explain.
\begin{lem}[Generalising {\cite[Lemma~5.8]{Ruskuc2000}}]
\label{lem_5.8'}
The relation:
\[
\phi(1, \kappa(h, \eta)) = 1
\]
holds in $\gG$.
\end{lem}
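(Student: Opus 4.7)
The plan is to exploit the characterization of relations in $\gG$ already recalled in the paper: a relation $\alpha=\beta$ over $B^*$ holds in $\gG$ if and only if $h\psi(\alpha)=h\psi(\beta)$ in $S$. Applied here with $\alpha \equiv \phi(1,\kappa(h,\eta))$ and $\beta \equiv 1$, and noting $\psi(1)\equiv 1$, the task reduces to verifying $h\,\psi\phi(1,\kappa(h,\eta))=h$ in $S$. To compute the left-hand side we would invoke Lemma \ref{Ruskuc_Lemma5.1}(ii) with $\lambda=1$ and $w\equiv\kappa(h,\eta)$, giving
\[
h\,\psi\phi(1,\kappa(h,\eta)) \;=\; h\,p_1\,\kappa(h,\eta)\,p'_{1\cdot\kappa(h,\eta)} \;=\; h\,\kappa(h,\eta)\,p'_{1\cdot\kappa(h,\eta)}
\]
(using $p_1\equiv 1$), provided of course that $1\cdot\kappa(h,\eta)\neq 0$; both this non-annihilation and the fact that $p'_{1\cdot\kappa(h,\eta)}\equiv 1$ will be established en route.

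The first key ingredient is the identification $h\ast\eta=\omega$. Since by construction $r_\eta\equiv e$ and $e$ represents an element of the pointwise right stabilizer of $H$, we have $he=h$ in $S$, so $hr_\eta=h\equiv r_\omega$, i.e.\ $h\ast R_\eta=R_\omega$. Feeding this into Lemma \ref{Ruskuc_Lemma5.2}(ii) with $w\equiv h$ and $j=\eta$ yields
\[
h\,r_\eta \;=\; r_{h\ast\eta}\,\kappa(h,\eta) \;=\; r_\omega\,\kappa(h,\eta) \;=\; h\,\kappa(h,\eta),
\]
and comparing with $hr_\eta=he=h$ we conclude that the relation $h\,\kappa(h,\eta)=h$ holds in $S$.

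The second key ingredient is the identity $1\cdot\kappa(h,\eta)=1$ in the action on $\Lambda\cup\{0\}$. Because $h\in H=H_1$ and $h\,\kappa(h,\eta)=h\in H\subseteq R$, the set $H_1\,\kappa(h,\eta)$ meets $R$, so $1\cdot\kappa(h,\eta)\neq 0$. Moreover, by Proposition \ref{prop_basic} (Green's Lemma), $H_1\,\kappa(h,\eta)$ lies in a single $\gh$-class, and since it contains $h\in H_1$ this class must be $H_1$ itself. Hence $1\cdot\kappa(h,\eta)=1$ and therefore $p'_{1\cdot\kappa(h,\eta)}\equiv p'_1\equiv 1$. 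Substituting into the displayed expression gives
\[
h\,\psi\phi(1,\kappa(h,\eta)) \;=\; h\,\kappa(h,\eta) \;=\; h
\]
in $S$, which is the required equality.

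The only genuinely non-routine step is the verification that $1\cdot\kappa(h,\eta)=1$; this is where the choice of $e$ as a \emph{pointwise} right stabilizer of $H$ (rather than merely a setwise stabilizer or an idempotent) is decisive, since it forces $h\kappa(h,\eta)=h$ on the nose and thereby pins down the $\gh$-class of the image. Everything else is a direct application of Lemmas \ref{Ruskuc_Lemma5.1} and \ref{Ruskuc_Lemma5.2} together with the standard identifications $p_1\equiv p'_1\equiv 1$ and $r_\omega\equiv h$ fixed at the start of the section.
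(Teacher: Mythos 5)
Your proof is correct and follows essentially the same route as the paper: establish $h\kappa(h,\eta)=h$ in $S$ via $he=h$ and Lemma~\ref{Ruskuc_Lemma5.2}(ii), deduce $1\cdot\kappa(h,\eta)=1$, and then apply Lemma~\ref{Ruskuc_Lemma5.1}(ii) together with $p_1\equiv p_1'\equiv 1$. You simply spell out the intermediate justification of $1\cdot\kappa(h,\eta)=1$ via Green's Lemma, which the paper leaves implicit in the word ``hence.''
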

\begin{proof}
By definition, $he = h$ in $S$. Now applying Lemma~\ref{Ruskuc_Lemma5.2} we obtain:
\[
h = he \equiv hr_\eta
=
r_{h * \eta} \kappa(h, \eta)
=
h \kappa(h, \eta),
\]
and hence $1 \cdot \kappa(h,\eta) = 1$. Then combining the above equation with Lemma~\ref{Ruskuc_Lemma5.1}(ii) gives
\[
h \psi( \phi(1, \kappa(h,\eta)) ) =
h p_1 \kappa(h, \eta) p_1' \equiv h \kappa(h, \eta) = h \equiv h \psi(1),
\]
and thus $\phi(1, \kappa(h, \eta)) = 1$ holds in $\gG$, as required.
\end{proof}

This completes the proof that all the relations (R1)-(R4) hold in $\gG$. Now we shall work towards the proof that this set of relations is enough to actually define $\gG$.

\begin{lem}[Generalising {\cite[Lemma~5.9]{Ruskuc2000}}]
\label{lem_5.9'}
For all $w \in B^*$ the relation
\[
\phi(1, \kappa(\pi(w) h, \eta)) = w
\]
is a consequence of the relations \emph{(R1)-(R4)}.
\end{lem}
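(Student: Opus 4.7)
The plan is to prove this by induction on the length of $w \in B^*$. The base case, $|w|=0$ so that $w \equiv 1$ and $\pi(w) \equiv 1$, reduces precisely to the statement $\phi(1, \kappa(h, \eta)) = 1$, which was already established in Lemma~\ref{lem_5.8'} and is exactly relation (R4).

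For the inductive step I would write $w \equiv w' \cdot b$, where $b = b[\mu, a] \in B$ is the final letter, and exploit the ``homomorphism-like'' behaviour of $\kappa$ and $\phi$ given by Lemmas~\ref{Ruskuc_Lemma5.2}(i) and \ref{Ruskuc_Lemma5.1}(i). Since $\pi$ is a homomorphism, $\pi(w) h \equiv \pi(w') \pi(b) h$ as words in $A^*$; two applications of Lemma~\ref{Ruskuc_Lemma5.2}(i) then yield
\[
\kappa(\pi(w) h, \eta)
\equiv
\kappa(\pi(w'),\omega)\, \kappa(\pi(b),\omega)\, \kappa(h,\eta).
\]
To justify this splitting, one needs to know the intermediate values of the $*$-action on $I$: the crucial observation is that $r_\eta \equiv e$ together with $he = h$ in $S$ force $h * \eta = \omega$, while Lemma~\ref{Ruskuc_Lemma5.4}(i) ensures that any word of the form $\pi(v)$ with $v \in B^*$ fixes $\omega$. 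Applying $\phi(1, \cdot)$, using Lemma~\ref{Ruskuc_Lemma5.1}(i), and using Lemma~\ref{Ruskuc_Lemma5.4}(ii) to simplify each shifted first coordinate (since $1 \cdot \kappa(\pi(v), \omega) = 1$ for every $v \in B^*$), I obtain
\[
\phi(1, \kappa(\pi(w) h, \eta))
\equiv
\phi(1, \kappa(\pi(w'), \omega))\, \phi(1, \kappa(\pi(b), \omega))\, \phi(1, \kappa(h, \eta)).
\]

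It then remains to identify each of the three factors modulo (R1)--(R4). The third factor equals $1$ by (R4). The middle factor equals $b$ directly by (R3). For the first factor I would apply the induction hypothesis to $w'$: an exactly parallel splitting gives $\phi(1, \kappa(\pi(w') h, \eta)) \equiv \phi(1, \kappa(\pi(w'), \omega))\, \phi(1, \kappa(h, \eta))$, and combining the inductive hypothesis (so that the left-hand side equals $w'$) with (R4) (so that the last factor is $1$) yields $\phi(1, \kappa(\pi(w'), \omega)) = w'$ modulo (R1)--(R4). Putting the three factors together gives $w' \cdot b \cdot 1 = w$, completing the induction.

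The main obstacle is bookkeeping rather than ideas: each application of Lemmas~\ref{Ruskuc_Lemma5.1}(i) and~\ref{Ruskuc_Lemma5.2}(i) is legitimate only once one knows the relevant first-coordinate values in $\Lambda \cup \{0\}$ and $I \cup \{0\}$ are nonzero and are the right indices, so one must track both actions simultaneously throughout the calculation. The single identity that makes everything fit together is $h * \eta = \omega$, which holds precisely because of the defining property $he = h$ of the stabilizer word $e$; this is exactly the place where our argument diverges from the corresponding step in \cite{Ruskuc2000}, where $e \equiv 1$ and $\eta = 1$ throughout.
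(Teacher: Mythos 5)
Your proposal is correct and follows essentially the same strategy as the paper: induction on $|w|$, with the base case supplied by (R4), the splitting of $\kappa(\pi(w)h,\eta)$ via Lemmas~\ref{Ruskuc_Lemma5.1}(i) and \ref{Ruskuc_Lemma5.2}(i), the simplification of first coordinates via Lemma~\ref{Ruskuc_Lemma5.4}, and identification of factors via (R3) and (R4). The only cosmetic difference is in the organisation of the induction step: the paper treats $|w|=1$ separately (that is essentially your ``middle factor equals $b$ by (R3)'' step) and then splits $w\equiv w_1w_2$ into two nonempty pieces, re-inserting $\phi(1,\kappa(h,\eta))$ via (R4) so that the inductive hypothesis applies to both factors directly, whereas you peel off the last letter and first convert the inductive hypothesis $\phi(1,\kappa(\pi(w')h,\eta))=w'$ into $\phi(1,\kappa(\pi(w'),\omega))=w'$ via the parallel splitting; both variants are sound and rely on exactly the same ingredients, including the crucial $h*\eta=\omega$ coming from $he=h$.
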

\begin{proof}
The result is proved by induction on the length of the word $w$.
When $|w| = 0$ then the displayed equation is just relation (R4).
Next consider the case $|w|=1$, so $w \equiv b[\lambda,a]$ for some $\lambda \in \Lambda$ and $a \in A$ with $\lambda \cdot a \neq 0$.
Then applying Lemma~\ref{Ruskuc_Lemma5.2}(i), and observing $h * \eta = \omega$ (since $he=h$),  we obtain
\[
\kappa( \pi(b[\lambda,1])h, \eta)
\equiv
\kappa(\pi(b[\lambda,a]), h*\eta)
\kappa(h, \eta)
\equiv
\kappa(\pi(b[\lambda,a]), \omega)
\kappa(h, \eta).
\]
By Lemma~\ref{Ruskuc_Lemma5.4}(ii) we have:
\[
1 \cdot \kappa(\pi(b[\lambda,a]), \omega) = 1,
\]
and thus applying Lemma~\ref{Ruskuc_Lemma5.1}(i) we have
\begin{align*}
      			& \;\phantom{\equiv}\; \phi(1, \kappa(\pi(b[\lambda,a]), \omega) \kappa(h, \eta)) & \\
 			& \equiv\phi(1, \kappa(\pi(b[\lambda,a]), \omega) \phi(1, \kappa(h, \eta)) & \\
	 		& =\phi(1, \kappa(\pi(b[\lambda,a]), \omega) &  \mbox{(by (R4))} \\
			& =b[\lambda,a], & \mbox{(by (R3))}
\end{align*}
as required. For the induction step, assume $|w| > 1$ and write $w \equiv w_1 w_2$ with $|w_1|, |w_2| > 0$. Then, recalling that $\pi$ is a homomorphism and that $h * \eta = \omega$, we have:
\begin{align*}
      			& \;\phantom{\equiv}\; \phi(1, \kappa(\pi(w) h, \eta))
			& & \\
			& \equiv \phi(1, \kappa(\pi(w_1), \omega) \kappa(\pi(w_2), \omega) \kappa(h,\eta))
			& & \mbox{(by Lemmas~\ref{Ruskuc_Lemma5.2}(i) and \ref{Ruskuc_Lemma5.4})} \\
			& \equiv \phi(1, \kappa(\pi(w_1), \omega)) \phi(1, \kappa(\pi(w_2), \omega)) \phi(1, \kappa(h,\eta))
			& & \mbox{(by Lemmas~\ref{Ruskuc_Lemma5.1}(i) and \ref{Ruskuc_Lemma5.4})} \\			
			& = \phi(1, \kappa(\pi(w_1), \omega)) \phi(1, \kappa(h,\eta)) \phi(1, \kappa(\pi(w_2), \omega)) \phi(1, \kappa(h,\eta))
			& & \mbox{(by (R4)} \\
			& \equiv \phi(1, \kappa(\pi(w_1) h, \eta)) \phi(1, \kappa(\pi(w_2) h, \eta))
			& & \mbox{(by Lemmas \ref{Ruskuc_Lemma5.1}(i), \ref{Ruskuc_Lemma5.2}(i) and \ref{Ruskuc_Lemma5.4})} \\			
			& = w_1w_2 \equiv w,
			& & \mbox{(by induction)}
\end{align*}
as required.
\end{proof}

The following two lemmas, and their proofs, will be of fundamental importance when we turn our attention to homotopy in the next section.
\begin{lem}[Generalising {\cite[Lemma~5.10]{Ruskuc2000}}]
\label{lem_5.10'}
Let $\alpha, \beta \in A^*$ and $(u=v) \in \mathfrak{R}$ such that $\alpha u \beta$ represents an element of the $\gh$-class $H$.
Then:
\begin{enumerate}
\item[\emph{(i)}]
$\alpha u \beta * \eta = \omega \neq 0$;
\item[\emph{(ii)}]
$1 \cdot \kappa(\alpha u \beta, \eta) = 1$; and
\item[\emph{(iii)}]
the relation $\phi(1, \kappa(\alpha u \beta, \eta)) = \phi(1, \kappa(\alpha v \beta, \eta))$ is a consequence of the relations \emph{(R1)-(R4)}.
\end{enumerate}
\end{lem}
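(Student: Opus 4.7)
For part (i), I would unwind the definition of the action $* $. Since $e \in \mathrm{Stab}(H)$ with $e/\sigma = 1$, the key fact is $he = h$ in $S$ for every $h \in H$ (this is exactly how we chose $e$). Because $\alpha u \beta$ represents some element $h' \in H$, we get $(\alpha u \beta) r_\eta = h'e = h' \in H \subseteq R = R_\omega$. Since $\gr$ is a left congruence (Proposition~\ref{prop_basic}(iii)), the whole coset $(\alpha u\beta) R_\eta$ lies in $R_\omega$, giving $\alpha u\beta * \eta = \omega$.

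For part (ii), Lemma~\ref{Ruskuc_Lemma5.2}(ii) combined with (i) gives $\alpha u\beta\, e = r_\omega \kappa(\alpha u\beta,\eta) = h\,\kappa(\alpha u\beta,\eta)$ in $S$, so $h\,\kappa(\alpha u\beta,\eta) \in H$. Writing $\kappa$ for $\kappa(\alpha u\beta,\eta)$, this means $h\kappa \gh h$. Using Green's Lemma (Proposition~\ref{prop_basic}(i)) with the connecting elements $\kappa$ and any $\kappa'$ satisfying $h\kappa\kappa' = h$, right multiplication by $\kappa$ is a bijection from $H_h = H$ to $H_{h\kappa} = H$, so $H\kappa = H$, i.e.\ $1 \cdot \kappa = 1$.

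Part (iii) is the main work. The plan is to split $\kappa$ using Lemma~\ref{Ruskuc_Lemma5.2}(i) and then isolate the $u$ versus $v$ contribution. Setting $j_1 = \beta * \eta$ and $j_2 = u * j_1 = v * j_1$, we get
\[
\kappa(\alpha u\beta,\eta) \equiv \kappa(\alpha,j_2)\,\kappa(u,j_1)\,\kappa(\beta,\eta),
\qquad
\kappa(\alpha v\beta,\eta) \equiv \kappa(\alpha,j_2)\,\kappa(v,j_1)\,\kappa(\beta,\eta).
\]
Because $\alpha u\beta * \eta = \omega$ by (i), all intermediate actions are defined, and because $R_{j_1} = \beta*R_\eta \in K$ and lies in $I$, we have $j_1,j_2 \in J$, so (R2) is applicable at these points. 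Then by Lemma~\ref{Ruskuc_Lemma5.1}(i),
\[
\phi(1,\kappa(\alpha u\beta,\eta)) \equiv \phi(1,\kappa(\alpha,j_2))\,\phi(\lambda_1,\kappa(u,j_1))\,\phi(\lambda_2,\kappa(\beta,\eta)),
\]
with $\lambda_1 = 1\cdot\kappa(\alpha,j_2)$ and $\lambda_2 = \lambda_1\cdot\kappa(u,j_1)$, and an analogous decomposition for the $v$-version.

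The obstacle, and the step I expect to require the most care, is verifying the side condition $H_{\lambda_1} \subseteq S r_{u*j_1} = S r_{j_2}$ needed to invoke (R2) on $\phi(\lambda_1,\kappa(u,j_1)) = \phi(\lambda_1,\kappa(v,j_1))$. For this I would use Lemma~\ref{Ruskuc_Lemma5.2}(ii) to write $h\kappa(\alpha,j_2) = \alpha r_{j_2}$, from which $H\kappa(\alpha,j_2) \subseteq SH\kappa(\alpha,j_2) \subseteq S\cdot \alpha r_{j_2} \subseteq S r_{j_2}$ (using $H\subseteq Sh$ since $\gl$-classes contain $\gh$-classes). The same computation $h_{\lambda_1}\kappa(u,j_1) = \alpha u r_{j_1} = \alpha v r_{j_1} = h_{\lambda_1}\kappa(v,j_1)$ shows $\lambda_1 \cdot \kappa(u,j_1) = \lambda_1 \cdot \kappa(v,j_1)$, so the final $\phi$-factors match. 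Applying (R2) and reassembling via Lemma~\ref{Ruskuc_Lemma5.1}(i) in reverse yields the desired equation modulo (R1)--(R4).
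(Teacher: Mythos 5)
Your proposal follows the paper's own proof essentially step for step: part (i) via $he=h$ and left congruence of $\gr$, part (ii) via Lemma~\ref{Ruskuc_Lemma5.2}(ii) plus Green's lemma, and part (iii) via the same three-factor decomposition using Lemmas~\ref{Ruskuc_Lemma5.1}(i) and \ref{Ruskuc_Lemma5.2}(i), reducing the middle factor to a single (R2) relation and matching the outer factors by the computation $h\kappa(\alpha u,\beta*\eta)=\alpha u r_{\beta*\eta}=\alpha v r_{\beta*\eta}$. Your verification of the side condition $H_{\lambda_1}\subseteq Sr_{j_2}$ via $H\subseteq Sh$ is a mild streamlining of the paper's ``nonempty intersection with a left ideal implies containment'' step, but it is the same argument in substance.
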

\begin{proof}
(i) Since $\alpha u \beta \in H$, from the definition of $e$ we have $\alpha u \beta e = \alpha u \beta$ in $S$. It follows that $\alpha u \beta * \eta = \omega$. (ii) Applying the previous statement (i) and Lemma~\ref{Ruskuc_Lemma5.2}(ii) we obtain
\[
H \ni \alpha u \beta = \alpha u \beta e \equiv \alpha u \beta r_\eta = r_{\omega} \kappa(\alpha u \beta, \eta) \equiv h \kappa(\alpha u \beta, \eta).
\]
It follows that:
\[
1 \cdot \kappa(\alpha u \beta, \eta) = 1.
\]
(iii) The argument follows very similar lines to the proof of \cite[Lemma~5.10]{Ruskuc2000}. Applying Lemmas~\ref{Ruskuc_Lemma5.1}(i) and \ref{Ruskuc_Lemma5.2}(i) to the left hand side gives:
\begin{align*}
			\phi(1, \kappa(\alpha u \beta, \eta)) 
			 \equiv [\phi(1, \kappa(\alpha, u \beta * \eta))] \;
			[\phi(1 \cdot \kappa(\alpha, u\beta * \eta), \kappa(u, \beta * \eta))] \;
			[\phi(1 \cdot \kappa(\alpha u, \beta * \eta), \kappa(\beta, \eta))],
\end{align*}
and similarly from the right hand side we obtain:
\begin{align*}
			\phi(1, \kappa(\alpha v \beta, \eta)) 
			 \equiv [\phi(1, \kappa(\alpha, v \beta * \eta))] \;
			[\phi(1 \cdot \kappa(\alpha, v\beta * \eta), \kappa(v, \beta * \eta))] \;
			[\phi(1 \cdot \kappa(\alpha v, \beta * \eta), \kappa(\beta, \eta))].
\end{align*}
Let us consider each of the three pairs of terms in turn.
Since $u=v$ in $S$ it follows that $u \beta * \eta = v \beta * \eta$, and hence
\[
\phi(1, \kappa(\alpha, u \beta * \eta)) \equiv \phi(1, \kappa(\alpha, v \beta * \eta)).
\]
This deals with the first pair of terms.

For the second (middle) pair of terms, we claim that
\begin{equation}\label{eq_newstar}
\phi(1 \cdot \kappa(\alpha, u\beta * \eta), \kappa(u, \beta * \eta))
=
\phi(1 \cdot \kappa(\alpha, v\beta * \eta), \kappa(v, \beta * \eta))
\end{equation}
is one of the relations (R2). This relies on the assumption that $\alpha u \beta = \alpha v \beta$ represents an element of $H$.
Indeed, by part (i), $\alpha u \beta * \eta = \omega$, which implies $u \beta * \eta \in J$ and $\beta * \eta \in J$. Then since:
\[
\alpha * (u \beta * \eta) = \alpha u \beta * \eta = \omega \neq 0,
\]
by part (i) again, we may apply Lemma~\ref{Ruskuc_Lemma5.2}(ii) to deduce:
\[
\alpha r_{u \beta * \eta} = r_{\alpha u \beta * \eta} \kappa(\alpha, u\beta * \eta) = r_{\omega} \kappa(\alpha, u \beta * \eta) = h \kappa(\alpha, u \beta * \eta).
\]
Now by definition since $e \equiv r_\eta \in R_\eta$ we have
\[
r_{u \beta * \eta}   \; \gr \; u \beta e ,
\]
which, since $\gr$ is a left congruence, implies
\[
\alpha r_{u \beta * \eta} \; \gr \;  \alpha u \beta e =  \alpha u \beta  \in H.
\]
It follows that $h \kappa(\alpha, u \beta * \eta) = \alpha r_{u \beta * \eta} \in R$ and hence $1 \cdot \kappa(\alpha, u \beta * \eta) \neq 0$.
Then since $u=v$ in $S$ we have that $u\beta * \eta = v\beta * \eta$ and thus:
\[
0 \neq 1 \cdot \kappa(\alpha, u\beta * \eta) = 1 \cdot \kappa(\alpha, v\beta * \eta) \in \Lambda.
\]
We have shown
\[
H_{1 \cdot  \kappa(\alpha, u \beta * \eta)} \ni h \kappa(\alpha, u \beta * \eta) = \alpha r_{u \beta * \eta} \in Sr_{u \beta * \eta}.
\]
Therefore $H_{1 \cdot  \kappa(\alpha, u \beta * \eta)} \cap Sr_{u \beta * \eta} \neq \varnothing$ and so:
\[
H_{1 \cdot  \kappa(\alpha, u \beta * \eta)} \subseteq  Sr_{u \beta * \eta} = Sr_{u * (\beta * \eta)}.
\]
It follows that, as claimed, \eqref{eq_newstar} is one of the relations (R2).

Finally consider the third pair of terms. By part (i) above and Lemma~\ref{Ruskuc_Lemma5.2}(ii), since $u=v$ in $S$ we have:
\[
h \kappa(\alpha u, \beta * \eta)
\equiv
r_{\alpha u \beta*\eta} \kappa(\alpha u, \beta * \eta)
=
\alpha u r_{\beta * \eta}
=
\alpha v r_{\beta * \eta}
\equiv
r_{\alpha v \beta*\eta} \kappa(\alpha u, \beta * \eta)
\equiv
h \kappa(\alpha v, \beta * \eta),
\]
so
\[
1 \cdot \kappa(\alpha u, \beta * \eta) = 1 \cdot \kappa(\alpha v, \beta * \eta),
\]
and hence
\[
\phi(1 \cdot \kappa(\alpha u, \beta * \eta), \kappa(\beta, \eta))
\equiv
\phi(1 \cdot \kappa(\alpha v, \beta * \eta), \kappa(\beta, \eta)).
\]
We conclude from these three observations that $\phi(1, \kappa(\alpha u \beta, \eta)) = \phi(1, \kappa(\alpha v \beta, \eta))$ may be deduced by a single application of a relation of the form (R2), and so this completes the proof of the theorem. \end{proof}
The proof of Theorem~\ref{thm_thepresentation} is then finished off using the following lemma.
\begin{lem}[Generalising {\cite[Lemma~5.11]{Ruskuc2000}}]
\label{lem_5.11'}
If $w_1, w_2 \in B^*$ are any two words such that $w_1 = w_2$ holds in $\gG$ then the relation $w_1 = w_2$ is a consequence of the relations \emph{(R1)-(R4)}.
\end{lem}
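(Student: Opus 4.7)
The plan is to reduce the claim, via Lemma~\ref{lem_5.9'}, to showing that $\phi(1,\kappa(\pi(w_1)h,\eta))$ and $\phi(1,\kappa(\pi(w_2)h,\eta))$ are provably equal modulo (R1)--(R4), and to establish the latter by tracking an elementary rewriting sequence in $A^*$ through Lemma~\ref{lem_5.10'}.

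First I would translate the hypothesis into $S$. Since each generator $b[\lambda,a]$ represents $\psi(b[\lambda,a])/\sigma$ in $\gG$, the relation $w_1 = w_2$ in $\gG$ is equivalent to $h\psi(w_1) = h\psi(w_2)$ in $S$, which by Lemma~\ref{Ruskuc_Lemma5.3} rewrites as $\pi(w_1)h = \pi(w_2)h$. I would then observe that $\psi(w_i)$ lies in the stabilizer $T = \mathrm{Stab}(H)$ (since each of its letters' images does), so both $\pi(w_i)h$ represent the same element of $H$, in particular an element of $H$.

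Next, I would invoke the fact that two words representing the same element of $S$ are connected by a finite chain of single-step rewrites using $\mathfrak{R}$,
\[
\pi(w_1)h \equiv \gamma_0 \to \gamma_1 \to \cdots \to \gamma_n \equiv \pi(w_2)h,
\]
where each step has the form $\gamma_k \equiv \alpha_k u_k \beta_k$, $\gamma_{k+1} \equiv \alpha_k v_k \beta_k$ with $(u_k = v_k)$ (or its reverse) in $\mathfrak{R}$. Every intermediate $\gamma_k$ represents the common element of $H$, so Lemma~\ref{lem_5.10'}(iii) applies at each step and yields
\[
\phi(1,\kappa(\gamma_k,\eta)) = \phi(1,\kappa(\gamma_{k+1},\eta))
\]
as a consequence of (R1)--(R4). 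Concatenating these $n$ equalities produces the desired $\phi(1,\kappa(\pi(w_1)h,\eta)) = \phi(1,\kappa(\pi(w_2)h,\eta))$ modulo (R1)--(R4).

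Finally, combining this with Lemma~\ref{lem_5.9'}, which asserts $\phi(1,\kappa(\pi(w_i)h,\eta)) = w_i$ modulo (R1)--(R4) for each $i$, I would conclude $w_1 = w_2$ via (R1)--(R4). The only real subtlety is to ensure each intermediate $\gamma_k$ represents an element of $H$ so that Lemma~\ref{lem_5.10'} is indeed applicable; this is immediate from the fact that every $\gamma_k$ represents the same element of $S$ as $\pi(w_1)h$, which was shown in the first step to lie in $H$. I expect no further obstacles beyond this bookkeeping, as the heavy lifting has already been done in Lemmas~\ref{lem_5.9'} and \ref{lem_5.10'}.
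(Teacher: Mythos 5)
Your proposal is correct and follows essentially the same route as the paper's own proof: reduce to $\pi(w_1)h = \pi(w_2)h$ via Lemma~\ref{Ruskuc_Lemma5.3}, connect the two words by a chain of single $\mathfrak{R}$-rewrites, apply Lemma~\ref{lem_5.10'}(iii) at each step (noting every intermediate word represents the same element of $H$), and close with Lemma~\ref{lem_5.9'}. The only cosmetic difference is that you justify $\pi(w_i)h \in H$ directly from $\psi(w_i)\in\mathrm{Stab}(H)$, whereas the paper cites Lemma~\ref{Ruskuc_Lemma5.4}(i); both are fine.
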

\begin{proof}
Since $w_1 = w_2$ it follows from Lemma~\ref{Ruskuc_Lemma5.3} that $\pi(w_1) h = \pi(w_2) h$ in $S$. Therefore, there is a sequence of words from $A^*$
\[
\pi(w_1) h \equiv  \gamma_1, \gamma_2, \ldots, \gamma_n \equiv  \pi(w_2) h
\]
such that each $\gamma_{i+1}$ is obtained from $\gamma_i$ by a single application of a relation from $\mathfrak{R}$. Since $\pi(w_1) h$ represents an element of $H$ (by Lemma~\ref{Ruskuc_Lemma5.4}(i)) it follows that every word in this sequence represents this same fixed element of $H$. Therefore we may apply Lemma~\ref{lem_5.10'} which tells us  that each relation:
\[
\phi(1, \kappa(\gamma_i, \eta)) =
\phi(1, \kappa(\gamma_{i+1}, \eta))
\]
is a consequence of the relations (R1)-(R4). Therefore the relation:
\[
\phi(1, \kappa(\pi(w_1) h, \eta)) = \phi(1, \kappa(\pi(w_2) h, \eta))
\]
is a consequence of the relations (R1)-(R4). By Lemma~\ref{lem_5.9'} the relations:
\[
w_1 = \phi(1, \kappa(\pi(w_1) h, \eta))
\quad \& \quad
\phi(1, \kappa(\pi(w_2) h, \eta)) = w_2,
\]
are each also consequences of the relations (R1)-(R4). Combining these observations we conclude that $w_1 = w_2$ is a consequence of the relations (R1)-(R4), completing the proof of the lemma.
\end{proof}

This completes the proof of Theorem~\ref{thm_thepresentation}, and concludes this section.

\section{A homotopy base for the Sch\"{u}tzenberger group}
\label{sec_hbase}

In the previous section we saw how to rewrite a presentation for a monoid to obtain a presentation for 
an arbitrary Sch\"{u}tzenbegrer group of that monoid, which is given in the statement of Theorem~\ref{thm_thepresentation}. The aim of this section is to find a homotopy base for the Sch\"{u}tzenbegrer group with respect to the presentation of Theorem~\ref{thm_thepresentation}. Then as a corollary we shall deduce Theorem~\ref{thm_main_result}.

We continue to use the same notation as in the previous section. In particular, $\gp = \lb A | \mathfrak{R} \rb$ is a presentation defining the monoid $S$, $\gq = \lb B | \mathfrak{U} \rb$ is the presentation for the Sch\"{u}tzenberger group $\gG = \gG(H)$ given in Theorem~\ref{thm_thepresentation}, and we have the index sets $J$ (indexing a specific collection of $\gr$-classes, defined above) and $\Lambda$ (indexing the $\gh$-classes in the $\gr$-class containing $H$), and the mappings $\phi$, $\psi$, $\kappa$ and $\pi$.

In the previous section we defined several mappings which, in various ways, rewrote words written in the alphabet $A$ into words written in the alphabet $B$, and vice versa. Since $A^*$ and $B^*$ are the sets of vertices of the derivations graphs $\Gamma(\gp
)$ and $\Gamma(\gq)$, respectively, these maps are actually mappings between the vertices of $\Gamma(\gp)$ and those of $\Gamma(\gq)$.
To study homotopy we need to move from working with vertices to working with paths. So, our first task here will be to extend some of the definitions from the previous section to obtain mappings between paths of $\Gamma(\gp)$ and paths of $\Gamma(\gq)$. 

Let us now outline the main steps of the proof that is to follow. 

\begin{enumerate}[(I)]
\item
We being by extending the mapping $\phi$ in such a way that it maps edges from $\Gamma(\mathcal{P})$ to edges in $\Gamma(\mathcal{Q})$, and then extend this mapping in a natural way so that is maps paths to paths. In fact, in its most general form, the function $\phi$ will be defined on triples $(\lambda, \bbe, j)$ where $\lambda \in \Lambda$, $j \in J$, $\bbe$ is an edge from $\Gamma(\gp)$ and conditions (i)--(iii) of \eqref{eqn_necessary} (given below) hold. 
\item 
In the second step we define a mapping $\theta$ which maps each edge from $\Gamma(\gq)$ to a path in $\Gamma(\mathcal{P})$.  
\item 
Next, for each $w \in B^*$ we shall define a certain path $\Lambda_w$ in 
$\Gamma(\gq)$. 
\item 
The definitions of $\phi$, $\theta$ and $\Lambda$ given in steps (I)--(III) will be made in such a way that for every edge $\bbe$ of $\Gamma(\gq)$:
\[
\bbe \circ \; \Lambda_{\tau \bbe} \circ \phi (\theta (\bbe))^{-1} \circ \Lambda_{\iota \bbe}^{-1}
\] 
will be a well-defined closed path in the graph $\Gamma(\gq)$. We use $Z$ to denote the set of all such paths, as $\bbe$ ranges over all edges of $\Gamma(\gq)$.
\item Given a homotopy base $X$ for $S$, which is a set of closed paths from $\Gamma(\gp)$, applying the mapping $\phi$ we obtain a set $Y_1$ of closed paths in $\Gamma(\gq)$. It is then straightforward to verify (see Lemma~\ref{lem_infinitetriv}) that taking $Z$ and $Y_1$ together we obtain a (necessarily infinite) homotopy base for $\Gamma(\gq)$.  
\item
In the final part of the proof we show how one may replace the set $Z$ by a certain subset $Y_2 \cup Y_3$, with the property that when the presentation $\lb A | \mathfrak{R} \rb$ is finite, and $X$ is finite, the set $Y_1 \cup Y_2 \cup Y_3$ is a finite homotopy base for $\lb B | \mathfrak{U} \rb$ (see Theorem~\ref{thm_baseforG}), thus establishing our main result Theorem~\ref{thm_main_result}(ii).  
\end{enumerate}

Before starting with step (I), we shall make a general observation that will be useful for us.

\begin{lem}
\label{lem_generalfact}
Let $u_1, u_2 \in A^*$ such that $u_1 = u_2$ in $S$, and let $j \in J$ with $u_1 * j \neq 0$ (equivalently $u_2 * j \neq 0$).
For all $\lambda \in \Lambda$, if
$
S r_{u_1 * j} \supseteq H_\lambda
$
then
\[
\lambda \cdot \kappa(u_1,j) =
\lambda \cdot \kappa(u_2,j) \neq 0.
\]
\end{lem}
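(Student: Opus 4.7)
The plan is to unpack the definitions and reduce the claim to a direct calculation based on the identity $ur_j = r_{u * j}\kappa(u,j)$ supplied by Lemma~\ref{Ruskuc_Lemma5.2}(ii). Concretely, I would fix an element $h_\lambda \in H_\lambda$ and use the hypothesis $H_\lambda \subseteq Sr_{u_1 * j}$ to write $h_\lambda = tr_{u_1 * j}$ for some $t \in S$; since the action of $S$ on $I \cup \{0\}$ depends only on the element represented, $u_1 * j = u_2 * j$, so also $h_\lambda = tr_{u_2 * j}$.

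The first step is the chain of equalities in $S$
\[
h_\lambda \kappa(u_1,j) = tr_{u_1 * j}\kappa(u_1,j) = tu_1 r_j = tu_2 r_j = tr_{u_2 * j}\kappa(u_2,j) = h_\lambda \kappa(u_2,j),
\]
which applies Lemma~\ref{Ruskuc_Lemma5.2}(ii) at the outer equalities and uses $u_1 = u_2$ in the middle. This collapses the two images of $h_\lambda$ into a single element of $S$; once I have verified that $\lambda \cdot \kappa(u_1,j)$ and $\lambda \cdot \kappa(u_2,j)$ are both nonzero, they must automatically coincide, since a single element lies in a unique $\gh$-class.

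To establish non-triviality of the action, I would argue that $r_{u_1 * j}\kappa(u_1,j) = u_1 r_j \in u_1 R_j \subseteq R_{u_1 * j}$, whence $r_{u_1 * j}\kappa(u_1,j) \;\gr\; r_{u_1 * j}$. Since $\gr$ is a left congruence by Proposition~\ref{prop_basic}(iii), left-multiplying by $t$ gives $h_\lambda \kappa(u_1,j) \;\gr\; h_\lambda$, so $h_\lambda \kappa(u_1,j) \in R$. Green's lemma (Proposition~\ref{prop_basic}(i)) then shows that right multiplication by $\kappa(u_1,j)$ restricts to a bijection from $H_\lambda$ onto the $\gh$-class of $h_\lambda \kappa(u_1,j)$, and this $\gh$-class lies inside $R$; hence $\lambda \cdot \kappa(u_1,j) \neq 0$. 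The identical argument applied to $u_2$ gives $\lambda \cdot \kappa(u_2,j) \neq 0$, and combining with the previous step forces the two values to index the same $\gh$-class.

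I do not anticipate a genuine obstacle: the only delicate point is ensuring that right multiplication by $\kappa(u_1,j)$ does not send $H_\lambda$ outside of $R$, and this is precisely what the hypothesis $H_\lambda \subseteq Sr_{u_1 * j}$ buys us, via the left-congruence of $\gr$. Everything else is routine manipulation of the defining relations for $\kappa$ and the action $*$.
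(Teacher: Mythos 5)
Your proof is correct and follows essentially the same route as the paper: the chain of equalities $h_\lambda\kappa(u_1,j)=t r_{u_1*j}\kappa(u_1,j)=tu_1 r_j=tu_2 r_j=h_\lambda\kappa(u_2,j)$ using Lemma~\ref{Ruskuc_Lemma5.2}(ii) and $u_1=u_2$, followed by the observation that $u_1 r_j \gr r_{u_1*j}$ and left congruence of $\gr$ place $h_\lambda\kappa(u_1,j)$ back in $R$, giving nonzero-ness. The only cosmetic difference is that the paper works with explicit word representatives $hp_\lambda$ and $q\in A^*$ where you work with abstract elements $h_\lambda\in H_\lambda$ and $t\in S$; the argument is the same.
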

\begin{proof}
Since $u_1 * j \neq 0$, by Lemma~\ref{Ruskuc_Lemma5.2}(ii), in $S$ we have
\[
u_1 r_j = r_{u_1 * j} \kappa(u_1, j)
\quad
\&
\quad
u_2 r_j = r_{u_2 * j} \kappa(u_2, j).
\]
Since $hp_\lambda$ represents an element from $H_\lambda$, it follows from the assumptions of the lemma that there exists $q \in A^*$ such that
$
q r_{u_1 * j} \equiv q r_{u_2 * j} = h p_{\lambda}
$
in $S$. Hence since $u_1 = u_2$ in $S$, we have:
\[
(hp_\lambda) \kappa(u_1,j) =
q r_{u_1 * j} \kappa(u_1,j) =
qu_1 r_j =
qu_2 r_j =
q r_{u_2 * j} \kappa(u_2,j) =
(h p_\lambda) \kappa(u_2, j)
\]
in $S$, and thus
\[
\lambda \cdot \kappa(u_1, j) = \lambda \cdot \kappa(u_2,j).
\]
As $u_1 r_j \gr r_{u_1 * j}$ and $\gr$ is a left congruence it follows that $q u_1 r_j \gr q r_{u_1 * j}$ which gives
\[
hp_\lambda \kappa(u_1, j) =
q u_1 r_j \gr q r_{u_1 * j}
= hp_\lambda \in H_\lambda,
\]
and therefore $\lambda \cdot \kappa(u_1, j) \neq 0$.
\end{proof}

Let $X$ be a fixed homotopy base for $\gp$. Our aim is to write down a homotopy base for the Sch\"{u}tzenberger group $\gG$. This will ultimately be done in Theorem~\ref{thm_baseforG} below, which has Theorem~\ref{thm_main_result} as an immediate corollary. We prove the result by working through steps (I)-(VI) outlined above. 

\subsection*{\boldmath (I) The function $\phi(\lambda, \kappa(\bbe,j))$ (where $\bbe \in \Gamma(\gp)$)}

\noindent The function $\phi$ will map edges from $\Gamma(\mathcal{P})$ to edges in $\Gamma(\mathcal{Q})$, and will then be extended in the obvious way to a mapping on paths. The definition of $\phi$ arises naturally from the proof of Lemma~\ref{lem_5.10'}. For every edge $\bbf$ of $\Gamma(\gp)$, $\lambda \in \Lambda$ and $j \in J$ such that
\begin{equation}
\label{eqn_necessary}
\mbox{(i)} \; \iota \bbf * j \neq 0,
\quad \quad
\mbox{(ii)} \; \lambda \cdot \kappa(\iota \bbf, j) \neq 0
\quad \quad \&
\quad \quad
\mbox{(iii)} \; H_\lambda \subseteq S r_{\iota \bbf * j},
\end{equation}
writing $\bbf = \alpha \cdot \bba \cdot \beta$ where $\alpha, \beta \in A^*$ and $\bba = \bba_r^\epsilon$ is elementary, we define 
\begin{equation}
\label{eqn_phipath0}
\phi(\lambda, \kappa(\alpha \cdot \bba \cdot \beta, j) )
=
\bba_{
\phi(\lambda, \kappa(\iota \bba, j)) =
\phi(\lambda, \kappa(\tau \bba, j))
}^\epsilon
\end{equation}
if $\alpha \equiv \beta \equiv 1$, and
\begin{align}
\label{eqn_phipath}
&  \phi(\lambda, \kappa(\alpha \cdot \bba \cdot \beta, j) )  \\
= &  \phi(\lambda, \kappa(\alpha, \iota \bba \beta * j ))  \cdot 
	\phi(\lambda \cdot \kappa(\alpha, \iota \bba \beta * j), \kappa(\bba, \beta * j))  \cdot 
		\phi(\lambda \cdot \kappa(\alpha \iota \bba, \beta * j), \kappa(\beta, j)) 
		\label{eqn_phipath2}
\end{align}
otherwise.

In this definition, the right hand term in \eqref{eqn_phipath0} is an elementary edge from $\Gamma(\gq)$ corresponding to a relation of the form (R2) (this will be verified in Lemma~\ref{lem_propertiesofphi} below), and the middle term in \eqref{eqn_phipath2} is defined since it is of the form \eqref{eqn_phipath0}. Note that $\phi$ maps edges of $\Gamma(\gp)$ to edges of $\Gamma(\gq)$.

\begin{remark}
\label{remark_implies}
Note that in \eqref{eqn_necessary} (and similarly in \eqref{eqn_necessary2} below), by Lemma~\ref{lem_generalfact}, condition (iii) actually implies condition (ii). 
\end{remark}
\noindent Note the slight abuse of notation here, now the function $\phi$ may be applied to edges and to words, in the latter case the definition comes from Section~\ref{sec_presentation}, while in the former it is given by the definition immediately above. This choice of notation is justified by \eqref{eqn_initialandterminal} below.

\begin{lem}
\label{lem_propertiesofphi}
Let $\bbf$ be an edge of $\Gamma(\gp)$ and let $\lambda \in \Lambda$ and $j \in J$ be such that {\rm(i)}-{\rm(iii)} of \eqref{eqn_necessary} are satisfied.
Then $\phi(\lambda, \kappa(\bbf, i) )$ is an edge in the derivation graph $\Gamma(\gq)$ satisfying:
\begin{equation}
\label{eqn_inverse}
\phi(\lambda, \kappa(\bbf, j))^{-1}
=
\phi(\lambda, \kappa(\bbf^{-1}, j)),
\end{equation}
\begin{equation}
\label{eqn_initialandterminal}
\iota \phi(\lambda, \kappa(\bbf, j) )
\equiv
\phi(\lambda, \kappa(\iota\bbf, j) )
\quad \& \quad
\tau \phi(\lambda, \kappa(\bbf, j) )
\equiv
\phi(\lambda, \kappa(\tau\bbf , j) ).
\end{equation}
\end{lem}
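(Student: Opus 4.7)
My plan is to argue directly from the recursive definition of $\phi$ on edges, proceeding by cases on whether $\bbf$ is elementary. First I would handle the elementary case $\bbf = \bba = \bba_r^\epsilon$ with $r : (u = v)$, in which \eqref{eqn_phipath0} declares the image to be the elementary edge whose associated relation is $\phi(\lambda, \kappa(u,j)) = \phi(\lambda, \kappa(v,j))$. To check that this really is an edge of $\Gamma(\gq)$, I must identify the putative relation among (R1)--(R4). It is precisely of the form (R2): hypothesis (iii) of \eqref{eqn_necessary} furnishes exactly the side condition $H_\lambda \subseteq Sr_{\iota\bba * j}$ demanded by (R2), while Lemma~\ref{lem_generalfact} guarantees $\lambda \cdot \kappa(u,j) = \lambda \cdot \kappa(v,j)$ so the case $\epsilon = -1$ is absorbed into the $\epsilon = +1$ shape of (R2). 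The inverse and initial/terminal properties in this base case are then immediate from the very definition of $\bba^\epsilon$ as an edge of $\Gamma(\gq)$.

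For the general case, formula \eqref{eqn_phipath2} already presents $\phi(\lambda, \kappa(\alpha \cdot \bba \cdot \beta, j))$ in the form $w_1 \cdot \bbe \cdot w_2$, where $w_1, w_2 \in B^*$ are the two outer $\phi$-terms (applied to vertices, as in Section~\ref{sec_presentation}) and $\bbe$ is the middle elementary edge. To conclude that the middle term is a bona fide edge I need to check that conditions (i)--(iii) of \eqref{eqn_necessary} are satisfied for the inner triple $(\lambda', \bba, j')$, where $\lambda' = \lambda \cdot \kappa(\alpha, \iota\bba\beta * j)$ and $j' = \beta * j$. Condition (i), $\iota\bba * j' \neq 0$, is immediate from $\iota\bba * j' = \iota\bba\beta * j$, which in turn is forced by $\iota\bbf * j \neq 0$. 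Condition (iii) is the only genuinely substantive point: using Lemma~\ref{Ruskuc_Lemma5.2}(ii) to write $hp_{\lambda'}$ in the form $\alpha r_{\iota\bba\beta * j}$, and using that $\gr$ is a left congruence (Proposition~\ref{prop_basic}(iii)), one concludes $h p_{\lambda'} \in S r_{\iota\bba * j'}$, and then condition (ii) follows from Remark~\ref{remark_implies}. With this, the middle term lies in $\Gamma(\gq)$ by the elementary case, and the full expression $w_1 \cdot \bbe \cdot w_2$ is an edge via the action of $B^*$ on $\Gamma(\gq)$.

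Once well-definedness is established, identities \eqref{eqn_inverse} and \eqref{eqn_initialandterminal} drop out by straight computation. For the inverse identity, flipping $\epsilon$ in the elementary middle term swaps $\iota\bba$ with $\tau\bba$ (hence swaps $\iota$ and $\tau$ of the resulting edge) but leaves the two outer words $w_1, w_2$ untouched, which is exactly \eqref{eqn_inverse}. For the initial/terminal identities, apply Lemma~\ref{Ruskuc_Lemma5.1}(i) and Lemma~\ref{Ruskuc_Lemma5.2}(i) to decompose the word $\phi(\lambda, \kappa(\alpha(\iota\bba)\beta, j))$ along the factorization $\alpha \cdot \iota\bba \cdot \beta$; the three factors produced are precisely $w_1$, $\iota\bbe$, and $w_2$, matching $\iota(w_1 \cdot \bbe \cdot w_2)$, and the terminal identity is symmetric.

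I expect the main obstacle to be the bookkeeping in the general case: specifically, verifying that the substantive condition (iii) of \eqref{eqn_necessary} propagates to the inner triple $(\lambda', \bba, j')$. This is where the left-congruence property of $\gr$ has to be combined with the $\kappa$-decomposition in just the right way, and it is also what forces the subtle asymmetry in the definition of (R2). The remaining verifications are essentially unpacking the definitions of $\phi$ and $\kappa$ and invoking Lemmas~\ref{Ruskuc_Lemma5.1} and \ref{Ruskuc_Lemma5.2} which were designed precisely to make these decompositions work functorially.
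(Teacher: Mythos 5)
Your overall plan is correct and follows the paper's own proof quite closely: split into the elementary case (where the image is an (R2)-edge) and the composite case (where you must check \eqref{eqn_necessary}(i)--(iii) for the inner triple $(\lambda', \bba, j')$), and you correctly isolate the verification of (iii) at the inner level as the substantive step, handled by Lemma~\ref{Ruskuc_Lemma5.2}(ii) together with the left-congruence of $\gr$ and then Remark~\ref{remark_implies} for (ii). Your description of the (iii) argument is a little loose (one does not literally ``write $hp_{\lambda'}$ in the form $\alpha r_{\iota\bba\beta*j}$''; rather one exhibits \emph{some} element of $H_{\lambda'}$, namely $s\alpha r_{\iota\bba*j'}$ for a suitable $s$, lying in $Sr_{\iota\bba*j'}$), but the intended argument is the correct one.

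The one genuine flaw is the last clause ``and the terminal identity is symmetric.'' It is not symmetric, because the defining formula \eqref{eqn_phipath2} is written asymmetrically in $\iota\bba$: both outer factors $\phi(\lambda,\kappa(\alpha,\iota\bba\beta*j))$ and $\phi(\lambda\cdot\kappa(\alpha\iota\bba,\beta*j),\kappa(\beta,j))$ carry $\iota\bba$, not $\tau\bba$. The initial vertex therefore falls out by just applying Lemmas~\ref{Ruskuc_Lemma5.1}(i) and \ref{Ruskuc_Lemma5.2}(i), but to compute the terminal vertex you must additionally justify that you may replace every $\iota\bba$ by $\tau\bba$ in those outer factors. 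For the index this requires showing $\lambda\cdot\kappa(\alpha\iota\bba,\beta*j)=\lambda\cdot\kappa(\alpha\tau\bba,\beta*j)\neq 0$, which is a further application of Lemma~\ref{lem_generalfact} together with hypothesis \eqref{eqn_necessary}(iii); the equality $\iota\bba\beta*j=\tau\bba\beta*j$ then handles the left outer factor. The paper spells out precisely this step, and it does require the hypotheses rather than being a formal symmetry.
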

\begin{proof}
Write $\bbf = \alpha \cdot \bba \cdot \beta$, where $\bba$ is elementary and $\alpha, \beta \in A^*$. We must show that $\phi(\lambda, \kappa(\alpha \cdot \bba \cdot \beta, i) )$ is a well-defined edge in the derivation graph $\Gamma(\gq)$, and that it satisfies:
\[
\iota \phi(\lambda, \kappa(\alpha \cdot \bba \cdot \beta, j) )
\equiv
\phi(\lambda, \kappa(\alpha \cdot \iota \bba \cdot \beta, j) )
\;\; \& \;\;
\tau \phi(\lambda, \kappa(\alpha \cdot \bba \cdot \beta, j) )
\equiv
\phi(\lambda, \kappa(\alpha \cdot \tau \bba \cdot \beta, j) ).
\]
First suppose that $\alpha \equiv \beta \equiv 1$. Then  since \eqref{eqn_necessary} holds it follows that
\[
\phi(\lambda, \kappa(\iota \bba, j))
=
\phi(\lambda, \kappa(\tau \bba, j))
\]
is a relation in $\gq$ of the form (R2), and hence \eqref{eqn_phipath0} defines an elementary edge of $\Gamma(\gq)$ satisfying \eqref{eqn_initialandterminal}.

Now suppose that one of $\alpha$ or $\beta$ is not the empty word. We need to check that each of the terms in equation \eqref{eqn_phipath2} is defined, and that \eqref{eqn_initialandterminal} holds.

The word acting on the left in \eqref{eqn_phipath2} is
$\phi(\lambda, \kappa(\alpha, \iota \bba \beta * j ))$ which is defined, since $\iota \bba \beta * j \neq 0$ by assumption \eqref{eqn_necessary}(i), then
$
\lambda \cdot \kappa(\alpha, \iota \bba \beta * j ) \neq 0
$
by Lemma~\ref{Ruskuc_Lemma5.2}(i) and assumption \eqref{eqn_necessary}(ii). On the other hand, the word acting on the right in \eqref{eqn_phipath2} is
$		\phi(\lambda \cdot \kappa(\alpha \iota \bba, \beta * j), \kappa(\beta, j))$ which is defined, since $\beta * j \neq 0$ by \eqref{eqn_necessary}(i), 
and then by Lemma~\ref{Ruskuc_Lemma5.2}(i) and assumption \eqref{eqn_necessary}(ii) we have:
\[
\lambda \cdot \kappa(\alpha \iota \bba, \beta * j) \cdot \kappa(\beta, j)
=
\lambda \cdot \kappa(\alpha \iota \bba \beta, j)
\neq 0.
\]
Finally, the middle term in \eqref{eqn_phipath2} is
\[
\phi(\lambda \cdot \kappa(\alpha, \iota \bba \beta * j), \kappa(\bba, \beta * j))
=
\bba_{
\phi(\lambda', \kappa(\iota \bba, j')) =
\phi(\lambda', \kappa(\tau \bba, j'))
}^\epsilon
\]
where
\[
\lambda' = \lambda \cdot \kappa(\alpha, \iota \bba \beta * j)
\quad \& \quad
j' =  \beta * j.
\]
With these parameters we much check the (i)-(iii) of \eqref{eqn_necessary} all hold.
Clearly (i) holds since $\iota \bba * j' = \iota \bba * \beta * j \neq 0$, so by Remark~\ref{remark_implies} it just remains to show that
$
H_{\lambda'} \subseteq
S r_{\iota \bba * j'}.
$
Let $u \equiv \iota \bba$. By Lemma~\ref{Ruskuc_Lemma5.2}(ii) we have:
\begin{equation}
\label{eqn_diamond}
\alpha r_{u * j'} = r_{\alpha * u * j'} \kappa(\alpha, u * j').
\end{equation}
From assumption \eqref{eqn_necessary}(iii)
\[
H_\lambda \subseteq S r_{\alpha u \beta * j} = S r_{\alpha u * j'}.
\]
So there exists $s \in A^*$ such that $sr_{\alpha u * j'} \in H_\lambda$. Premultiplying \eqref{eqn_diamond} by $s$ gives
\[
s \alpha r_{u * j'} = s r_{\alpha * u * j'} \kappa(\alpha, u * j')
\]
where $sr_{\alpha u * j'} \in H_\lambda$. Since by definition $\lambda \cdot \kappa(\alpha, u*j') = \lambda'$, we deduce that $s \alpha r_{u * j'} \in H_{\lambda'}$, which implies
$H_{\lambda'} \subseteq S r_{\iota \bba * j'}$. This completes the proof that the middle term in the expression \eqref{eqn_phipath2} is an elementary edge of $\Gamma(\gq)$.

To complete the proof of the lemma we must verify \eqref{eqn_inverse} and \eqref{eqn_initialandterminal}.
Straightforward applications of Lemmas~\ref{Ruskuc_Lemma5.1} and \ref{Ruskuc_Lemma5.2} and equations \eqref{eqn_phipath}--\eqref{eqn_phipath2} give
\[
\iota \phi(\lambda, \kappa(\alpha \cdot \bba \cdot \beta))
\equiv
\phi(\lambda, \kappa(\alpha \cdot \iota \bba \cdot \beta)).
\]
For the terminal vertex, we apply assumption \eqref{eqn_necessary}(iii) and Lemma~\ref{lem_generalfact} which together imply:
\[
\lambda \cdot \kappa(\alpha \iota \bba, \beta * j)
=
\lambda \cdot \kappa(\alpha \tau \bba, \beta * j)
\neq 0.
\]
In addition, $\iota \bba = \tau \bba$ implies $\iota \bba \beta * j = \tau \bba \beta * j$. This allows us to replace $\iota$'s by $\tau$'s, concluding
\begin{align*}
& \; \phantom{=} \;
\tau \phi(\lambda, \kappa(\alpha \cdot \bba \cdot \beta, j) )  \\
& \equiv \tau(
\phi(\lambda, \kappa(\alpha, \iota \bba \beta * j )) \; \cdot \;
	\phi(\lambda \cdot \kappa(\alpha, \iota \bba \beta * j), \kappa(\bba, \beta * j)) \; \cdot \;
		\phi(\lambda \cdot \kappa(\alpha \iota \bba, \beta * j), \kappa(\beta, j))
) \\
& \equiv \phi(\lambda, \kappa(\alpha, \iota \bba \beta * j )) \; \cdot \;
	\phi(\lambda \cdot \kappa(\alpha, \iota \bba \beta * j), \kappa(\tau \bba, \beta * j)) \; \cdot \;
		\phi(\lambda \cdot \kappa(\alpha \iota \bba, \beta * j), \kappa(\beta, j)) \\
& \equiv \phi(\lambda, \kappa(\alpha, \tau \bba \beta * j )) \; \cdot \;
	\phi(\lambda \cdot \kappa(\alpha, \tau \bba \beta * j), \kappa(\tau \bba, \beta * j)) \; \cdot \;
		\phi(\lambda \cdot \kappa(\alpha \tau \bba, \beta * j), \kappa(\beta, j)) \\
& \equiv	\phi(\lambda, \kappa(\alpha \cdot \tau \bba \cdot \beta)).
\end{align*}
Finally, it follows easily from the definition that 
\[
\phi(\lambda, \kappa(\bbf, j))^{-1}
=
\phi(\lambda, \kappa(\bbf^{-1}, j)).
\]
This establishes \eqref{eqn_inverse} and \eqref{eqn_initialandterminal}, and so completes the proof.
\end{proof}
The above definition of $\phi$ on edges extends naturally to paths in the following way.
For every path $\bbp = \bbf_1 \circ \bbf_2 \circ \cdots\circ \bbf_m$ of $\Gamma(\gp)$, $\lambda \in \Lambda$ and $j \in J$ such that
\begin{equation}
\label{eqn_necessary2}
\mbox{(i)} \; \iota \bbp * j \neq 0,
\quad \quad
\mbox{(ii)} \; \lambda \cdot \kappa(\iota \bbp, j) \neq 0
\quad \quad \&
\quad \quad
\mbox{(iii)} \; H_\lambda \subseteq S r_{\iota \bbp * j}
\end{equation}
we define
\begin{equation}
\label{eqn_paths}
\phi(\lambda, \kappa(\bbp,  j)) =
\phi(\lambda, \kappa(\bbf_1,j))\circ
\phi(\lambda, \kappa(\bbf_2,j))\circ \cdots\circ
\phi(\lambda, \kappa(\bbf_m,j)).
\end{equation}
Note that since in $S$ we have
$
\iota \bbp \equiv \iota \bbf_1 = \cdots = \iota \bbf_m,
$
applying Lemma~\ref{lem_generalfact} it follows from \eqref{eqn_necessary2} that the corresponding conditions hold for each of the edges $\bbf_q$ ($1 \leq q \leq m$), and thus each term on the right hand side of \eqref{eqn_paths} is defined. The fact that the edges in the right hand side of \eqref{eqn_paths} can be composed in $\Gamma(\gq)$ follows from Lemma~\ref{lem_propertiesofphi}, which also implies:
\begin{equation}
\label{eqn_inversepaths}
\phi(\lambda, \kappa(\bbp, j))^{-1}
=
\phi(\lambda, \kappa(\bbp^{-1}, j)),
\end{equation}
\begin{equation}
\label{eqn_initialandterminalpaths}
\iota \phi(\lambda, \kappa(\bbp, j) )
\equiv
\phi(\lambda, \kappa(\iota\bbp, j) )
\quad \& \quad
\tau \phi(\lambda, \kappa(\bbp, j) )
\equiv
\phi(\lambda, \kappa(\tau\bbp , j) ).
\end{equation}
From now on we shall use $\phi(\bbp)$ as a shorthand for $\phi(1, \kappa(\bbp,\eta))$; 
the introduction of this notation is justified by  
Lemma~\ref{lem_5.10'}. 
The function $\phi(\lambda, \kappa(\bbp, j))$ is only defined for 
certain combinations of $\bbp$, $\lambda$ and $j$.  
The next lemma identifies one important family of such instances.
\begin{lem}
\label{lem_phidefined}
Let $\bbp$ be a path in $\Gamma(\gp)$. If $\iota \bbp $ represents an element of $H$
then
$
\phi(\bbp)
$
is a well-defined path in $\Gamma(\gq)$ from $\phi(\iota \bbp)$ to $\phi(\tau \bbp)$, and $\phi(\bbp)^{-1} = \phi(\bbp^{-1})$.
\end{lem}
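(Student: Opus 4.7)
The plan is to verify, with $\lambda = 1$ and $j = \eta$, that the three hypotheses of \eqref{eqn_necessary2} all hold for the path $\bbp$; by \eqref{eqn_paths} this will yield that $\phi(\bbp) = \phi(1,\kappa(\bbp,\eta))$ is a well-defined path in $\Gamma(\gq)$. The statements about endpoints and about compatibility with inversion will then be immediate consequences of \eqref{eqn_initialandterminalpaths} and \eqref{eqn_inversepaths}.

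For condition \eqref{eqn_necessary2}(i), I would argue exactly as in Lemma~\ref{lem_5.10'}(i): since $\iota\bbp$ represents an element of $H$ and $e$ was chosen to stabilise $H$ pointwise on the right, we have $\iota\bbp\cdot e = \iota\bbp$ in $S$; together with $r_\eta \equiv e$ this forces $\iota\bbp * \eta = \omega \neq 0$. For \eqref{eqn_necessary2}(ii), I would mimic Lemma~\ref{lem_5.10'}(ii), using Lemma~\ref{Ruskuc_Lemma5.2}(ii) to write
\[
\iota\bbp \;=\; \iota\bbp\cdot e \;\equiv\; \iota\bbp\cdot r_\eta \;=\; r_\omega\cdot\kappa(\iota\bbp,\eta) \;\equiv\; h\cdot\kappa(\iota\bbp,\eta);
\]
since the left side lies in $H$, this forces $h\cdot\kappa(\iota\bbp,\eta)\in H$, so $1\cdot\kappa(\iota\bbp,\eta)=1\neq 0$. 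For \eqref{eqn_necessary2}(iii), one needs $H_1 = H \subseteq S r_{\iota\bbp*\eta} = Sh$; but $h\in H$ puts $H$ inside the $\gl$-class of $h$, and in a monoid every element $\gl$-related to $h$ lies in $Sh$, giving the required inclusion.

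Once these three conditions are in hand, \eqref{eqn_paths} immediately gives that $\phi(\bbp)$ is a well-defined path in $\Gamma(\gq)$. Specialising \eqref{eqn_initialandterminalpaths} to $\lambda = 1$, $j = \eta$ identifies its endpoints as $\phi(1,\kappa(\iota\bbp,\eta))$ and $\phi(1,\kappa(\tau\bbp,\eta))$, which under the shorthand $\phi(w) := \phi(1,\kappa(w,\eta))$ (legitimate because $\iota\bbp = \tau\bbp$ in $S$, so $\tau\bbp$ also represents an element of $H$) are precisely $\phi(\iota\bbp)$ and $\phi(\tau\bbp)$. Finally, $\phi(\bbp)^{-1} = \phi(\bbp^{-1})$ is simply \eqref{eqn_inversepaths} specialised to $\lambda = 1$, $j = \eta$. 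The only step carrying any real content is the $\gl$-class observation used for \eqref{eqn_necessary2}(iii); everything else is bookkeeping in the definitions already set up, so I do not foresee a genuine obstacle.
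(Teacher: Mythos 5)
Your proof is correct and follows essentially the same route as the paper's: verify conditions (i)--(iii) of \eqref{eqn_necessary2} at $\lambda = 1$, $j = \eta$ (using the same argument as Lemma~\ref{lem_5.10'}(i) for (i) and the observation $r_{\iota\bbp*\eta} \equiv r_\omega \equiv h$ for (iii)), then read off the remaining claims from \eqref{eqn_paths}, \eqref{eqn_inversepaths} and \eqref{eqn_initialandterminalpaths}. The only cosmetic difference is that the paper invokes Remark~\ref{remark_implies} to dispense with (ii), whereas you check it directly; your verification of (ii) is correct but redundant.
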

\begin{proof}
Recall that $\phi(\bbp) = \phi(1, \kappa(\bbp, \eta))$.
We must check that (i)-(iii) of \eqref{eqn_necessary2} hold. By Remark~\ref{remark_implies} it will suffice just to check (i) and (iii). 

For (i) we use the same argument as in part (i) of Lemma~\ref{lem_5.10'}.
Indeed, since  $e$ represents an element of the right stabilizer of $H$ and $\iota \bbp$ represents an element of $H$ it follows that $\iota \bbp * \eta = \omega$, establishing (i).
For part (iii), $r_{\iota \bbp * j} \equiv r_\omega \equiv h$ and so $H_1 \subseteq Sh$, as required.

The last two claims are consequences of \eqref{eqn_inversepaths} and \eqref{eqn_initialandterminalpaths}.
\end{proof}
The following lemma  tells us that acting on a path $\bbp$ from $\Gamma(\gp)$ and then applying $\phi$ is equivalent to first applying $\phi$ and then acting with appropriate words in $\Gamma(\gq)$.

\begin{lem}\label{lem_expansingphi}
Let $\bbp$ be a path from $\Gamma(\gp)$, $\alpha, \beta \in A^*$, and let $j \in J$, $\lambda \in \Lambda$ be such that
$\alpha \iota \bbp \beta * j \neq 0$ and
$H_{\lambda} \subseteq Sr_{{\alpha \iota \bbp \beta} * j}$.
Then
\begin{align}
& \phi(\lambda, \kappa(\alpha \cdot \bbp \cdot \beta, j) )  \label{eqn_expansingphi1} \\
= & \phi(\lambda, \kappa(\alpha, \iota \bbp \beta * j ))  \cdot 
	\phi(\lambda \cdot \kappa(\alpha, \iota \bbp \beta * j), \kappa(\bbp, \beta * j))  \cdot 
		\phi(\lambda \cdot \kappa(\alpha \iota \bbp, \beta * j), \kappa(\beta, j)).  \label{eqn_expansingphi2}
\end{align}
\end{lem}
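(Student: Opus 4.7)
The plan is to reduce to the single-edge case already handled by \eqref{eqn_phipath}--\eqref{eqn_phipath2} by writing $\bbp = \bbf_1 \circ \cdots \circ \bbf_m$ as a composition of edges, expanding $\phi$ term by term, and then pulling out of the composition the ``outer'' word factors, which turn out to be independent of the edge index.

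First I would verify that for each $q$ the triple $(\lambda, \alpha \cdot \bbf_q \cdot \beta, j)$ satisfies conditions (i)--(iii) of \eqref{eqn_necessary}, so that each $\phi(\lambda, \kappa(\alpha \cdot \bbf_q \cdot \beta, j))$ is a well-defined edge of $\Gamma(\gq)$. This is immediate: since every $\bbf_q$ is a single relation application, all of $\iota\bbf_1, \tau\bbf_1, \iota\bbf_2, \ldots, \tau\bbf_m$ represent the same element of $S$ (namely the element represented by $\iota\bbp$), so $\alpha \iota\bbf_q \beta * j = \alpha \iota\bbp \beta * j \neq 0$, condition (iii) transfers directly from the hypothesis, and (ii) is then free by Remark~\ref{remark_implies}. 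Writing $\alpha \cdot \bbp \cdot \beta = (\alpha \cdot \bbf_1 \cdot \beta) \circ \cdots \circ (\alpha \cdot \bbf_m \cdot \beta)$ and applying definition \eqref{eqn_paths} gives
\[
\phi(\lambda, \kappa(\alpha \cdot \bbp \cdot \beta, j)) = \phi(\lambda, \kappa(\alpha \cdot \bbf_1 \cdot \beta, j)) \circ \cdots \circ \phi(\lambda, \kappa(\alpha \cdot \bbf_m \cdot \beta, j)),
\]
and each factor expands via \eqref{eqn_phipath}--\eqref{eqn_phipath2} as $A_q \cdot B_q \cdot C_q$, where $A_q, C_q \in B^*$ act on the sides and $B_q$ is the central edge.

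The decisive observation is that both $A_q$ and $C_q$ are independent of $q$. Since $\iota\bbf_q = \iota\bbp$ in $S$ we have $\iota\bbf_q \beta * j = \iota\bbp\beta * j$, so
\[
A_q = \phi(\lambda, \kappa(\alpha, \iota\bbf_q \beta * j)) = \phi(\lambda, \kappa(\alpha, \iota\bbp \beta * j)) =: A'.
\]
For $C_q = \phi(\lambda \cdot \kappa(\alpha \iota\bbf_q, \beta * j), \kappa(\beta, j))$, the words $\alpha \iota\bbf_q$ and $\alpha \iota\bbp$ represent the same element of $S$, and the hypothesis $H_\lambda \subseteq Sr_{\alpha \iota\bbp \beta * j}$ is precisely what is required to invoke Lemma~\ref{lem_generalfact} and conclude $\lambda \cdot \kappa(\alpha \iota\bbf_q, \beta * j) = \lambda \cdot \kappa(\alpha \iota\bbp, \beta * j)$, whence $C_q = C'$. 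Since the left/right actions by words distribute over path composition,
\[
\phi(\lambda, \kappa(\alpha \cdot \bbp \cdot \beta, j)) = A' \cdot (B_1 \circ B_2 \circ \cdots \circ B_m) \cdot C'.
\]
Finally, setting $\lambda' := \lambda \cdot \kappa(\alpha, \iota\bbp\beta * j)$ and $j' := \beta * j$, each $B_q$ equals $\phi(\lambda', \kappa(\bbf_q, j'))$, and \eqref{eqn_paths} applied to $\bbp$ with parameters $(\lambda', j')$ identifies $B_1 \circ \cdots \circ B_m$ with $\phi(\lambda', \kappa(\bbp, j'))$ -- precisely the middle factor of the claimed right-hand side.

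The only real obstacle is the bookkeeping around conditions \eqref{eqn_necessary}/\eqref{eqn_necessary2}; in particular, to invoke \eqref{eqn_paths} for the middle term one needs \eqref{eqn_necessary2}(iii), namely $H_{\lambda'} \subseteq Sr_{\iota\bbp * j'}$. This follows by taking any element $sr_{\alpha\iota\bbp\beta * j} \in H_\lambda$, multiplying Lemma~\ref{Ruskuc_Lemma5.2}(ii)'s identity $\alpha r_{\iota\bbp\beta * j} = r_{\alpha\iota\bbp\beta * j}\kappa(\alpha, \iota\bbp\beta * j)$ on the left by $s$, and noting that the resulting element lies in $H_{\lambda'}$ as well as in $Sr_{\iota\bbp * j'}$; since $Sr_{\iota\bbp * j'}$ is a union of $\gh$-classes inside the $\gr$-class $R$ (as $\gh \subseteq \gl$, left ideals being unions of $\gl$-classes by the dual of Proposition~\ref{prop_basic}(iv)), the whole $\gh$-class $H_{\lambda'}$ is contained in it. Conceptually the argument is simply ``the outer factors are constant in $q$, pull them out, and the middle collapses to $\phi$ of $\bbp$.''
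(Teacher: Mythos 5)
Your overall plan matches the paper's: pass from paths to edges via \eqref{eqn_paths}, handle the single-edge case, and then recompose. The well-definedness check for the middle factor (verifying $H_{\lambda'} \subseteq Sr_{\iota\bbp * j'}$ by premultiplying the identity from Lemma~\ref{Ruskuc_Lemma5.2}(ii) by $s$ and using that $Sr_{\iota\bbp*j'}$ is a union of $\gl$-classes) is essentially the same calculation the paper records in the proof of Lemma~\ref{lem_propertiesofphi}, so that part is fine.

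The imprecision is in the claim that each $\phi(\lambda, \kappa(\alpha\cdot\bbf_q\cdot\beta, j))$ ``expands via \eqref{eqn_phipath}--\eqref{eqn_phipath2} as $A_q\cdot B_q\cdot C_q$'' with $A_q = \phi(\lambda, \kappa(\alpha, \iota\bbf_q\beta*j))$, $C_q = \phi(\lambda\cdot\kappa(\alpha\iota\bbf_q, \beta*j), \kappa(\beta, j))$ and $B_q = \phi(\lambda',\kappa(\bbf_q,j'))$. That is exactly the \emph{conclusion of the lemma} specialised to the single edge $\bbf_q$, not the direct application of \eqref{eqn_phipath}--\eqref{eqn_phipath2}. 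Writing $\bbf_q = \alpha_q\cdot\bba_q\cdot\beta_q$ with $\bba_q$ elementary, the direct application of \eqref{eqn_phipath2} to $\alpha\cdot\bbf_q\cdot\beta = (\alpha\alpha_q)\cdot\bba_q\cdot(\beta_q\beta)$ gives outer factors $\phi(\lambda, \kappa(\alpha\alpha_q, \iota\bba_q\beta_q\beta*j))$ and $\phi(\lambda\cdot\kappa(\alpha\alpha_q\iota\bba_q,\beta_q\beta*j), \kappa(\beta_q\beta, j))$, with the elementary edge from $\bba_q$ in the middle --- not the decomposition you wrote down. Going from one to the other is precisely the single-edge case of the lemma, and it needs the multiplicativity of $\kappa$ (Lemma~\ref{Ruskuc_Lemma5.2}(i)) to split $\kappa(\alpha\alpha_q,\cdot)$ and $\kappa(\beta_q\beta, j)$, and the multiplicativity of $\phi$ (Lemma~\ref{Ruskuc_Lemma5.1}(i)) to regroup, exactly as the paper's one-line proof signals. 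So you should first establish the lemma for an edge $\bbf$ using those two lemmas, and only then invoke it for each $\bbf_q$; as written, the step is circular. Once that is added, the rest of your argument --- the edges all sharing the same $\iota\bbp$ up to equality in $S$, so that the outer words $A'$, $C'$ are $q$-independent and pull out by distributivity of the $A^*$-actions over $\circ$ --- is correct and is what the paper intends.
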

\begin{proof}
The proof is straightforward: one first establishes the result for edges using \eqref{eqn_phipath}--\eqref{eqn_phipath2} and Lemmas~\ref{Ruskuc_Lemma5.1}(i) and \ref{Ruskuc_Lemma5.2}(i), and then the result for paths follows straight from \eqref{eqn_paths}.
\end{proof}

\begin{cor}
\label{corol_useful} 
Let $\bbp$ be a path from $\Gamma(\gp)$ and let $\alpha, \beta \in A^*$.
\begin{enumerate}
\item[\emph{(i)}]
If $\alpha \iota \bbp$ represents an element of $H$ then
\begin{equation*}
\phi(\alpha \cdot \bbp) =
\phi(1, \kappa(\alpha, \iota \bbp * \eta)) \cdot
\phi(1 \cdot \kappa(\alpha, \iota \bbp * \eta), \kappa(\bbp, \eta)).
\end{equation*}
\item[\emph{(ii)}]
If $\iota \bbp \beta$ represents an element of $H$ then
\begin{equation*}
\phi(\bbp \cdot \beta) =
\phi(1, \kappa(\bbp, \beta * \eta)) \cdot
\phi(1 \cdot \kappa(\iota \bbp, \beta * \eta), \kappa(\beta, \eta) ).
\end{equation*}
\end{enumerate}
\end{cor}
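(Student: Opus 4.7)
The plan is to deduce both parts directly as specialisations of Lemma~\ref{lem_expansingphi}, after checking that its hypotheses are automatically satisfied and then collapsing the trivial factors that appear.

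For part (i), I would apply Lemma~\ref{lem_expansingphi} with $\beta \equiv 1$, $\lambda = 1$ and $j = \eta$. The two hypotheses required are $\alpha \iota \bbp \cdot 1 \ast \eta \neq 0$ and $H_1 \subseteq S r_{\alpha \iota \bbp \ast \eta}$. Since $\alpha \iota \bbp$ represents an element of $H$ and $e$ lies in the right pointwise stabilizer of $H$, the same argument as in Lemma~\ref{lem_phidefined}(i) gives $\alpha \iota \bbp \ast \eta = \omega$; thus $r_{\alpha \iota \bbp \ast \eta} \equiv r_\omega \equiv h \in H = H_1$, and $H_1 \subseteq Sh$ holds trivially. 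Substituting $\beta \equiv 1$ into \eqref{eqn_expansingphi1}--\eqref{eqn_expansingphi2} makes the third factor equal to $\phi(\,\cdot\,, \kappa(1,\eta)) = \phi(\,\cdot\,, 1) = 1$, the empty word in $B^*$, and the remaining two factors are precisely the right-hand side of (i), once one recalls the shorthand $\phi(\bbp) = \phi(1, \kappa(\bbp, \eta))$.

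For part (ii), I would instead specialise Lemma~\ref{lem_expansingphi} with $\alpha \equiv 1$, $\lambda = 1$ and $j = \eta$. The hypothesis $\iota \bbp \beta \in H$ similarly forces $\iota \bbp \beta \ast \eta = \omega$ and $r_\omega \equiv h$, so $H_1 \subseteq Sh$ is immediate. Substituting $\alpha \equiv 1$ into \eqref{eqn_expansingphi1}--\eqref{eqn_expansingphi2}, the first factor becomes $\phi(1, \kappa(1, \iota \bbp \beta \ast \eta)) = \phi(1,1) = 1$, while $1 \cdot \kappa(1, \iota \bbp \beta \ast \eta) = 1$ causes the middle factor to simplify to $\phi(1, \kappa(\bbp, \beta \ast \eta))$, yielding exactly the stated expression.

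There is no real obstacle here: both assertions reduce to routine book-keeping with Lemma~\ref{lem_expansingphi} together with the normalising identities $\kappa(1, j) = 1$ and $\phi(\lambda, 1) = 1$. The only mildly non-trivial point is verifying condition (iii) of Lemma~\ref{lem_expansingphi}, and in both cases this is immediate from the hypothesis that the specified word represents an element of $H$, via the fact that $e$ stabilises $H$ pointwise on the right (so that the $\ast \eta$ image is $\omega$ and $r_\omega \equiv h$).
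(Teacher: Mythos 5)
Your proof is correct and follows essentially the same route as the paper: specialise Lemma~\ref{lem_expansingphi} at $\beta\equiv 1$ (resp.\ $\alpha\equiv 1$), verify its hypotheses via the argument of Lemma~\ref{lem_phidefined}, and discard the trivial factor $\phi(\cdot,\kappa(1,\eta))\equiv 1$ (resp.\ $\phi(1,\kappa(1,\cdot))\equiv 1$). The paper's own proof is just a terser statement of the same deduction.
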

\begin{proof}
This follows from Lemma~\ref{lem_phidefined} and equation \eqref{eqn_expansingphi1}--\eqref{eqn_expansingphi2} along with the facts:
\[
\phi(1 \cdot \kappa(\alpha \iota \bbp, \eta), \kappa(1, \eta))\equiv 1
\quad
\&
\quad
\phi(1, \kappa(1,\iota \bbp \beta * \eta)) \equiv 1,
\]
which are immediate from the definitions of $\phi$ and $\kappa$.
\end{proof}

The next result shows how the mapping $\phi$ is well behaved with respect to non-overlapping relations.

\begin{lem}
\label{lem_diamonds}
Let $\bbe_1$, $\bbe_2$ be edges from $\Gamma(\gp)$ such that $\iota \bbe_1 \iota \bbe_2$ represents an element of $H$.
Then $\phi([\bbe_1, \bbe_2])$ is null-homotopic in the Squier complex $\mathcal{D}(\gq)$.
\end{lem}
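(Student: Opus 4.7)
The plan is to apply the expansion formula from Corollary~\ref{corol_useful} to each of the four sides of $[\bbe_1,\bbe_2]$ and observe that the resulting concatenation is literally a defining closed path of the Squier complex $\mathcal{D}(\gq)$, hence null-homotopic by construction.

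I would first set up the common data. Let $j = \iota \bbe_2 * \eta$; since $\iota \bbe_2 = \tau \bbe_2$ in $S$, also $\tau \bbe_2 * \eta = j$. From $\iota \bbe_1 \iota \bbe_2 \in H$ together with the defining relation $he = h$ in $S$ one obtains $\iota \bbe_1 * j = \omega$ (compare Lemma~\ref{lem_5.10'}(i)), whence $H_1 = H \subseteq Sh = S r_{\iota \bbe_1 * j}$, so condition~\eqref{eqn_necessary} is satisfied and $\bbf_1 := \phi(1, \kappa(\bbe_1, j))$ is a well-defined edge of $\Gamma(\gq)$. Setting $\mu = 1 \cdot \kappa(\iota \bbe_1, j)$, Lemma~\ref{lem_generalfact} also gives $\mu = 1 \cdot \kappa(\tau \bbe_1, j)$; a short check using $\iota \bbe_1 r_j \in H_\mu \cap S r_j$ together with Proposition~\ref{prop_basic}(iv) confirms that $\bbf_2 := \phi(\mu, \kappa(\bbe_2, \eta))$ is likewise a well-defined edge of $\Gamma(\gq)$.

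Next, since all four ``corners'' of the square $[\bbe_1,\bbe_2]$ represent the same element of $H$ in $S$, Corollary~\ref{corol_useful} applies to each of the four sides. Using \eqref{eqn_inverse} to treat the sides involving $\bbe_i^{-1}$, direct computation yields
\begin{align*}
\phi(\bbe_1 \cdot \iota \bbe_2)      &= \bbf_1 \cdot \phi(\mu, \kappa(\iota \bbe_2, \eta)) \equiv \bbf_1 \cdot \iota \bbf_2, \\
\phi(\tau \bbe_1 \cdot \bbe_2)       &= \phi(1, \kappa(\tau \bbe_1, j)) \cdot \bbf_2 \equiv \tau \bbf_1 \cdot \bbf_2, \\
\phi(\bbe_1^{-1} \cdot \tau \bbe_2)  &= \bbf_1^{-1} \cdot \phi(\mu, \kappa(\tau \bbe_2, \eta)) \equiv \bbf_1^{-1} \cdot \tau \bbf_2, \\
\phi(\iota \bbe_1 \cdot \bbe_2^{-1}) &= \phi(1, \kappa(\iota \bbe_1, j)) \cdot \bbf_2^{-1} \equiv \iota \bbf_1 \cdot \bbf_2^{-1},
\end{align*}
where the vertex identifications come from~\eqref{eqn_initialandterminal}. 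Concatenating these four expressions in the order prescribed by \eqref{eqn_pullup_pushdown} shows that $\phi([\bbe_1, \bbe_2])$ is identically the defining closed path $[\bbf_1, \bbf_2]$ of $\mathcal{D}(\gq)$, and so it is null-homotopic by the definition of the Squier complex.

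The only real subtlety is ensuring that the index $j$ and the vertex index $\mu$ produced by Corollary~\ref{corol_useful} really coincide on all four sides, so that the local edges assemble into the single pair $(\bbf_1, \bbf_2)$ rather than four unrelated edges. The coincidence of the $j$-values comes from $\iota \bbe_2 = \tau \bbe_2$ in $S$, and the coincidence of the $\mu$-values from Lemma~\ref{lem_generalfact} applied to $\iota \bbe_1 = \tau \bbe_1$. Once these bookkeeping identities are in hand the argument closes immediately, without any further homotopies inside $\mathcal{D}(\gq)$.
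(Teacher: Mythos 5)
Your proof is correct and takes essentially the same approach as the paper: define $\bbf_1 = \phi(1,\kappa(\bbe_1, \iota\bbe_2*\eta))$ and $\bbf_2 = \phi(\mu,\kappa(\bbe_2,\eta))$, use Corollary~\ref{corol_useful}, \eqref{eqn_inverse}, \eqref{eqn_initialandterminal}, and Lemma~\ref{lem_generalfact} to identify $\phi$ of the four sides, and conclude that the image is the defining path $[\bbf_1,\bbf_2]$ of $\mathcal{D}(\gq)$. The paper phrases the conclusion slightly differently, computing only the two ``halves'' and applying the $\sim_0$-move $(\bbf_1\cdot\iota\bbf_2)\circ(\tau\bbf_1\cdot\bbf_2) \sim_0 (\iota\bbf_1\cdot\bbf_2)\circ(\bbf_1\cdot\tau\bbf_2)$, but this is equivalent to your observation that $\phi([\bbe_1,\bbe_2])$ equals $[\bbf_1,\bbf_2]$ on the nose.
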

\begin{proof}
Recall that:
\[
[\bbe_1,\bbe_2] =
(\bbe_1 \cdot \iota \bbe_2)\circ
(\tau \bbe_1 \cdot \bbe_2)\circ
(\bbe_1^{-1} \cdot \tau \bbe_2)\circ
(\iota \bbe_1 \cdot \bbe_2^{-1}).
\]
Since $\iota \bbe_2 = \tau \bbe_2$ in $S$ we may define
\[
\bbf_1
 =   \phi(1, \kappa(\bbe_1, \iota \bbe_2 * \eta))
 =   \phi(1, \kappa(\bbe_1, \tau \bbe_2 * \eta)).
\]
Since $\iota \bbe_1 \iota \bbe_2$ represents an element of $H$ it follows that $S r_{\iota \bbe_1 \iota \bbe_2 * \eta} = S r_\omega = Sh \supseteq H_1$ and applying Lemma~\ref{lem_generalfact} we define:
\begin{eqnarray*}
\bbf_2
& = &   \phi(1 \cdot \kappa(\iota\bbe_1, \iota \bbe_2 * \eta), \kappa(\bbe_2, \eta))
 = \phi(1 \cdot \kappa(\tau\bbe_1, \iota \bbe_2 * \eta), \kappa(\bbe_2, \eta)) \\
& = &  \phi(1 \cdot \kappa(\iota\bbe_1, \tau \bbe_2 * \eta), \kappa(\bbe_2, \eta))
 =  \phi(1 \cdot \kappa(\tau\bbe_1, \tau \bbe_2 * \eta), \kappa(\bbe_2, \eta)).
\end{eqnarray*}
So $\bbf_1$ and $\bbf_2$ are edges in the derivation graph $\Gamma(\gq)$.
Then applying Corollary~\ref{corol_useful} and \eqref{eqn_initialandterminal} gives
\begin{align*}
& \; \phantom{=} \;
\; \;
\phi(\bbe_1 \cdot \iota \bbe_2)
\circ
\phi(\tau \bbe_1 \cdot \bbe_2)
\\
& = \; \;
\phi(1, \kappa(\bbe_1, \iota\bbe_2 * \eta)) \cdot
\phi(1 \cdot \kappa(\iota \bbe_1, \iota\bbe_2 * \eta), \kappa(\iota\bbe_2, \eta))
\\
& \hspace{6cm}
\circ
\phi(1, \kappa(\tau\bbe_1, \iota \bbe_2 * \eta)) \cdot
\phi(1 \cdot \kappa(\tau\bbe_1, \iota \bbe_2 * \eta), \kappa(\bbe_2, \eta))
\\
& = \; \;
(\bbf_1 \cdot \iota \bbf_2) \circ (\tau \bbf_1 \cdot \bbf_2)
\\
& \sim_0
(\iota \bbf_1 \cdot \bbf_2) \circ (\bbf_1 \cdot \tau \bbf_2)
\\
& = \; \;
\phi(1, \kappa(\iota\bbe_1, \iota \bbe_2 * \eta)) \cdot
\phi(1 \cdot \kappa(\iota\bbe_1, \iota \bbe_2 * \eta), \kappa(\bbe_2, \eta))
\\
& \hspace{6cm}
\circ
\phi(1, \kappa(\bbe_1, \tau\bbe_2 * \eta)) \cdot
\phi(1 \cdot \kappa(\iota \bbe_1, \tau\bbe_2 * \eta), \kappa(\tau\bbe_2, \eta))
\\
& = \; \;
\phi(\iota \bbe_1 \cdot \bbe_2) \circ \phi(\bbe_1 \cdot \tau \bbe_2).
\end{align*}
Thus $\phi([\bbe_1, \bbe_2])$ is null-homotopic in the Squier complex $\mathcal{D}(\gq)$.
\end{proof}

\begin{cor}
\label{corol_phihomotopy}
Let $\bbp$  and $\bbq$ be paths in $\Gamma(\gp)$ such that $\iota \bbp$ and $\iota \bbq$ both represent elements of $H$. If
$\bbp \sim_0 \bbq$ in $\Gamma(\gp)$ then $\phi(\bbp) \sim_0 \phi(\bbq)$ in $\Gamma(\gq)$.
\end{cor}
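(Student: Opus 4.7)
My plan is to decompose the witness $\bbp \sim_0 \bbq$ into elementary homotopy moves and show that each move survives application of $\phi$. The key technical input, Lemma~\ref{lem_diamonds}, has already been established; this corollary is essentially a packaging result.

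First I would record the \emph{compositional property} of $\phi$: if $\bbp = \bbp_1 \circ \bbp_2$ is any factorisation, then every vertex appearing in $\bbp$ represents the same element of $S$ as $\iota \bbp$ (because adjacent vertices along any edge of $\Gamma(\gp)$ represent equal elements of $S$), and this element lies in $H$. Hence Lemma~\ref{lem_phidefined} applies to both $\bbp_1$ and $\bbp_2$, and directly from definition~\eqref{eqn_paths} (which applies $\phi$ edge-by-edge with the same parameters $(1,\eta)$) we obtain
\[
\phi(\bbp_1 \circ \bbp_2) = \phi(\bbp_1) \circ \phi(\bbp_2).
\]
Using this, a straightforward induction on the number of elementary moves in a sequence witnessing $\bbp \sim_0 \bbq$ reduces the problem to the case where $\bbp = \bbs_1 \circ \bbr_1 \circ \bbs_2$ and $\bbq = \bbs_1 \circ \bbr_2 \circ \bbs_2$ differ by a single elementary move; the goal becomes $\phi(\bbr_1) \sim_0 \phi(\bbr_2)$ in $\Gamma(\gq)$.

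For a type-(i) move ($\bbr_1 = \bbe \circ \bbe^{-1}$ and $\bbr_2$ empty) the conclusion is immediate from \eqref{eqn_inverse}, as $\phi(\bbe \circ \bbe^{-1}) = \phi(\bbe) \circ \phi(\bbe)^{-1} \sim_0 1$. For a type-(ii) move, $\bbr_1 \circ \bbr_2^{-1}$ is a cyclic permutation of $[\bbe_1,\bbe_2]^{\pm 1}$ for some edges $\bbe_1,\bbe_2$ of $\Gamma(\gp)$. The base vertex of this closed subpath is also a vertex of $\bbp$, so $\iota \bbe_1 \iota \bbe_2$ represents an element of $H$, and Lemma~\ref{lem_diamonds} yields $\phi([\bbe_1,\bbe_2]) \sim_0 1$. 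Because $\phi$ respects composition and inverses, $\phi$ sends the given cyclic permutation of $[\bbe_1,\bbe_2]^{\pm 1}$ to the corresponding cyclic permutation of $\phi([\bbe_1,\bbe_2])^{\pm 1}$; and cyclic conjugation of a null-homotopic closed path preserves null-homotopy (via the standard trick of inserting and deleting pairs $\phi(\bbf) \circ \phi(\bbf)^{-1}$). Hence $\phi(\bbr_1) \circ \phi(\bbr_2)^{-1} \sim_0 1$, whence $\phi(\bbr_1) \sim_0 \phi(\bbr_2)$, and concatenating with $\phi(\bbs_1)$, $\phi(\bbs_2)$ gives $\phi(\bbp) \sim_0 \phi(\bbq)$.

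The main (mild) obstacle is the bookkeeping involved in checking the hypotheses of Lemma~\ref{lem_phidefined} for each subpath that appears during the induction. This is however uniform: every intermediate vertex of any path in the sequence represents the same element of $S$ as $\iota \bbp \in H$, so every $\phi$-image under discussion is well-defined. No new insight beyond Lemma~\ref{lem_diamonds} is required.
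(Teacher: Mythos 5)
Your proof is correct, and it is the detailed expansion of exactly the argument the paper intends: the paper's proof of Corollary~\ref{corol_phihomotopy} is the one-liner ``This follows immediately from Lemmas~\ref{lem_phidefined} and \ref{lem_diamonds}.'' Your write-up makes explicit the three ingredients that one-liner is silently invoking: (1) compositionality of $\phi$, which is automatic from the edge-by-edge definition~\eqref{eqn_paths} once one observes that every vertex appearing anywhere in the homotopy sequence represents the same element of $S$ as $\iota\bbp\in H$ (so Lemma~\ref{lem_phidefined} applies uniformly); (2) compatibility with inverses, supplied by \eqref{eqn_inverse} and \eqref{eqn_inversepaths}, which handles type-(i) moves; and (3) Lemma~\ref{lem_diamonds}, which handles type-(ii) moves, together with the observation that in the Squier complex $\mathcal{D}(\gp)$ the defining paths are precisely the $[\bbe_1,\bbe_2]$ (the set being closed under the $A^*$-action since $\alpha\cdot[\bbe_1,\bbe_2]\cdot\beta=[\alpha\cdot\bbe_1,\bbe_2\cdot\beta]$), so that the cyclic-conjugation bookkeeping you describe is all that remains. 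This is the same route as the paper, spelled out at the level of individual homotopy moves; there is no gap.
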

\begin{proof}
This follows immediately from Lemmas~\ref{lem_phidefined} and \ref{lem_diamonds}.
\end{proof}

Recall that $X$ is a fixed homotopy base for the monoid $S$ with respect to the presentation $\gp$.
Our aim is to define a homotopy base $Y$ for $\gq$. The set $Y$ will ultimately be given as a union of sets $Y = Y_1 \cup Y_2 \cup Y_3$. We now define the first of these sets which is given my taking images under $\phi$ of closed paths from $X$:
\begin{align*}
Y_1 = & \{ \phi(\lambda \cdot \kappa(\alpha, \iota \bbp \beta * j), \kappa(\bbp, \beta * j))
: \\
& \bbp \in X, \;
\alpha, \beta \in A^*, \;
j \in J, \;
\lambda \in \Lambda, \;
{\alpha \iota \bbp \beta} * j \neq 0, \;
H_{\lambda} \subseteq Sr_{{\alpha \iota \bbp \beta} * j}
\}.
\end{align*}
It follows from Lemma~\ref{lem_expansingphi} that all the elements in this set are well-defined. 
Observe that if $X$, $J$ and $\Lambda$ are all finite, then $Y_1$ is finite. 
\begin{lem}
\label{lem_homotopy1}
Let $\bbp \in X$, $\alpha, \beta \in A^*$, and let $j \in J$, $\lambda \in \Lambda$ such that ${\alpha \iota \bbp \beta} * j \neq 0$ and $H_{\lambda} \subseteq Sr_{{\alpha \iota \bbp \beta} * j}$.
Then in $\Gamma(\gq)$
\[
\phi(\lambda, \kappa(\alpha \cdot \bbp \cdot \beta, j) )
\sim_{Y_1}
1_v
\]
where $v$ is the initial vertex of the path $\phi(\lambda, \kappa(\alpha \cdot \bbp \cdot \beta, j) )$. In particular, if $\bbp \in X$ and $\alpha \iota \bbp \beta$ represents an element of $H$ then
\[
\phi(\alpha \cdot \bbp \cdot \beta) \sim_{Y_1} 1_v.
\]
\end{lem}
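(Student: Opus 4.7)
The plan is to apply Lemma~\ref{lem_expansingphi} to decompose $\phi(\lambda, \kappa(\alpha \cdot \bbp \cdot \beta, j))$ as a product of a word from $B^*$, a path in $\Gamma(\gq)$, and another word from $B^*$, then to observe that the middle factor is a closed path that lies in $Y_1$. Once this is established, one application of the defining path and the trivial observation that the whole product then collapses to an empty path will complete the argument.

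Concretely, by Lemma~\ref{lem_expansingphi} one has
\[
\phi(\lambda, \kappa(\alpha \cdot \bbp \cdot \beta, j) ) = u \cdot P \cdot w,
\]
where $u = \phi(\lambda, \kappa(\alpha, \iota \bbp \beta * j))$ and $w = \phi(\lambda \cdot \kappa(\alpha \iota \bbp, \beta * j), \kappa(\beta, j))$ are words in $B^*$, and $P = \phi(\lambda \cdot \kappa(\alpha, \iota \bbp \beta * j), \kappa(\bbp, \beta * j))$ is a path in $\Gamma(\gq)$. Comparing the description of $P$ with the definition of $Y_1$ and invoking the hypotheses $\alpha \iota \bbp \beta * j \neq 0$ and $H_\lambda \subseteq S r_{\alpha \iota \bbp \beta * j}$, we see that $P$ is exactly one of the defining paths of $Y_1$.

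Since $\bbp \in X$ is a closed path, $\iota \bbp \equiv \tau \bbp$; applying \eqref{eqn_initialandterminalpaths} with parameters $\lambda' = \lambda \cdot \kappa(\alpha, \iota \bbp \beta * j)$ and $j' = \beta * j$ yields $\iota P \equiv \tau P$, so $P$ is a closed path in $\Gamma(\gq)$. Therefore $P \sim_{Y_1} 1_{\iota P}$ by the very definition of $\sim_{Y_1}$, and consequently
\[
\phi(\lambda, \kappa(\alpha \cdot \bbp \cdot \beta, j) ) = u \cdot P \cdot w \sim_{Y_1} u \cdot 1_{\iota P} \cdot w = 1_{u \, (\iota P) \, w} = 1_v,
\]
where $v$ denotes the initial vertex of $\phi(\lambda, \kappa(\alpha \cdot \bbp \cdot \beta, j))$.

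For the ``in particular'' clause, recall that $\phi(\alpha \cdot \bbp \cdot \beta)$ is shorthand for $\phi(1, \kappa(\alpha \cdot \bbp \cdot \beta, \eta))$, so we apply the main part with $\lambda = 1$ and $j = \eta$. To verify the two hypotheses it suffices to argue as in the proof of Lemma~\ref{lem_phidefined}: since $\alpha \iota \bbp \beta$ represents an element of $H$ and $e$ lies in the right pointwise stabilizer of $H$, we have $(\alpha \iota \bbp \beta) e = \alpha \iota \bbp \beta$ in $S$, whence $\alpha \iota \bbp \beta * \eta = \omega$; then $r_\omega \equiv h$ and every element of $H_1 = H$ is $\gl$-related to $h$, giving $H_1 \subseteq Sh = S r_{\alpha \iota \bbp \beta * \eta}$. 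There is no substantive obstacle in this lemma — it is a direct consequence of Lemma~\ref{lem_expansingphi} together with the observation that $\bbp$ being closed forces $P$ to be closed; the only care needed is in matching the parameters of $P$ against the definition of $Y_1$ and avoiding a variable clash between the left/right acting words and the vertex $v$ named in the statement.
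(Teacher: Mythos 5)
Your proof is correct and takes essentially the same route as the paper, which simply says this is "an immediate consequence of \eqref{eqn_expansingphi1}--\eqref{eqn_expansingphi2} and the definition of $Y_1$." You have spelled out the decomposition, the verification that the middle factor $P$ is closed and lies in $Y_1$, and the identification of $u\,(\iota P)\,w$ with the initial vertex $v$ — all correct and consistent with what the paper leaves implicit.
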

\begin{proof}
This is an immediate consequence of \eqref{eqn_expansingphi1}--\eqref{eqn_expansingphi2} and the definition of $Y_1$.
\end{proof}

\subsection*{\boldmath (II) The function $\theta(\bbe)$ (where $\bbe \in \Gamma(\gq)$)}

\noindent We now define a mapping $\theta$ which is defined on the full edge set of $\Gamma(\gq)$, and maps each edge
$\bbe$ of $\Gamma(\gq)$ to a particular path $\theta(\bbe)$ in $\Gamma(\gp)$ from $\pi(\iota \bbe) h$ to $\pi(\tau \bbe) h$.

Begin by defining 
\[
\theta : B^* \rightarrow A^*, \quad w \mapsto \pi (w) h \in A^*.
\]
Then for each relation $(u = v) \in \mathfrak{U}$ fix a path $\bbp [u,v]$ in $\Gamma (\gp)$ from $\pi(u) h$ to $\pi(v) h$. This is possible since $u=v$ holds in the Sch\"{u}tzenberger group $\gG(H)$ if and only if $h \psi (u) = h \psi (v)$ in $S$, which by Lemma~\ref{Ruskuc_Lemma5.3} is true if and only if $\pi(u) h = \pi(v) h$ in $S$.
Also, for every letter $b \in B$ choose and fix a path $\bbp[b]$ in $\Gamma(\gp)$ from $\pi(b)h $ to $h \psi(b)$; this is possible by Lemma~\ref{Ruskuc_Lemma5.3}. For the empty word we set $\bbp[1]$ to be the empty path at $h$. Then for every word $w \in B^+$, with $|w|>1$,  we define $\bbp[w]$ inductively by
\[
\bbp[w] = (\pi(w') \cdot \bbp[b]) \circ (\bbp[w'] \cdot \psi(b) )
\]
where $w \equiv w' b$ and $b\in B$. From the definition we see that $\bbp[w]$ is a path in $\Gamma(\gp)$ with
\begin{equation*}
\iota \bbp[w] = \pi(w) h,
\quad \quad
\tau \bbp[w] = h \psi(w).
\end{equation*}
Then for any positive edge $\bbe = (w_1, u=v, +1, w_2)$ of $\Gamma(\gq)$ we define
\[
\theta (\bbe) =
\pi (w_1) \cdot
[ (\pi(u) \cdot \bbp[w_2]) \circ (\bbp[u,v] \cdot \psi(w_2)) \circ (\pi(v) \cdot \bbp[w_2]^{-1})  ]
\]
and also define  $\theta (\bbe^{-1}) = \theta(\bbe)^{-1}$.
The path $\theta(\bbe)$ is illustrated in Figure~\ref{fig_path_thetaE}, from which the following is clear, so we omit the proof.
\begin{figure}
\begin{center}
\scalebox{0.7}
{
\begin{tikzpicture}
[blackbox/.style={draw, fill=blue!20, rectangle, minimum height=2cm, minimum width=5cm},
dottedbox/.style={draw, rectangle, loosely dashed, minimum height=3.5cm, minimum width=4cm},
clearbox/.style={draw, rectangle, minimum height=2cm, minimum width=4cm}]
\draw[dotted] (2,5) -- (2,7);
\draw[dotted] (3,5) -- (3,7);
\draw[dotted] (4,5) -- (4,7);
\draw[dotted] (2,1) -- (2,3);
\draw[dotted] (3,1) -- (3,3);
\draw[dotted] (4,1) -- (4,3);
\draw[dotted] (7,3) -- (7,5);
\draw[dotted] (8,3) -- (8,5);
\draw[dotted] (9,3) -- (9,5);
\node[clearbox]  (topleft) at (3,6) [label=above:$\pi(w_1 u)$] {};
\node[blackbox] (topright) at (7.5,6) [label=above:$\pi(w_2)h$] {$\mathbb{P}[w_2]$};
\node[blackbox] (midleft) at (3.5,4)  {$\pi(w_1) \cdot \mathbb{P}[u,v]$};
\node[clearbox] (midright) at (8,4)  {};
\node [fill=white] at (8,4) {$\psi(w_2)$};
\node[clearbox] (bottomleft) at (3,2) [label=below:$\pi(w_1 v)$] {};
\node[blackbox] (bottomright) at (7.5,2) [label=below:$\pi(w_2)h$] {$\mathbb{P}[w_2]^{-1}$};
\draw[very thick] (5,5) -- node[above]  {$h$} (6,5);
\draw[very thick] (5,3) -- node[below]  {$h$} (6,3);
\end{tikzpicture}
}
\end{center}
\caption{The path $\theta(\mathbb{E})$ where $\mathbb{E} = (w_1, u=v, +1, w_2)$.}
\label{fig_path_thetaE}
\end{figure}
\begin{lem}
\label{lem_thetaimage}
For every edge $\bbe$ of $\Gamma(\gq)$, $\theta(\bbe)$ is a path in $\Gamma(\gp)$ from $\pi(\iota \bbe) h$ to $\pi(\tau \bbe) h$.
\end{lem}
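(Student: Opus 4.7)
The plan is straightforward verification that the three sub-paths composing $\theta(\bbe)$ can in fact be concatenated and that the total composed path has the claimed endpoints. The check reduces to keeping track of $\iota$ and $\tau$ of each piece, using the two basic facts $\iota \bbp[w] = \pi(w) h$ and $\tau \bbp[w] = h \psi(w)$ (which follow by a trivial induction on $|w|$ from the defining recursion $\bbp[w] = (\pi(w') \cdot \bbp[b]) \circ (\bbp[w'] \cdot \psi(b))$) together with the defining properties $\iota \bbp[u,v] = \pi(u) h$ and $\tau \bbp[u,v] = \pi(v) h$.

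First I would remark that the choice of $\bbp[u,v]$ is legitimate: by hypothesis $(u=v)$ is a relation of $\gq$, hence holds in $\gG(H)$, so by Lemma~\ref{Ruskuc_Lemma5.3} the words $\pi(u)h$ and $\pi(v)h$ represent the same element of $S$, i.e.\ they lie in the same connected component of $\Gamma(\gp)$; similarly a path $\bbp[b]$ from $\pi(b) h$ to $h\psi(b)$ exists by Lemma~\ref{Ruskuc_Lemma5.3}.

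Next, for a positive edge $\bbe = (w_1, u=v, +1, w_2)$, I would compute the endpoints of each of the three sub-paths after the left action by $\pi(w_1)$ and the right action by $\psi(w_2)$. Writing out the calculation:
\begin{align*}
\iota(\pi(w_1) \cdot \pi(u) \cdot \bbp[w_2]) & = \pi(w_1 u)\cdot \pi(w_2) h = \pi(\iota \bbe)h, \\
\tau(\pi(w_1) \cdot \pi(u) \cdot \bbp[w_2]) & = \pi(w_1 u) \cdot h\psi(w_2) \\
& = \iota(\pi(w_1) \cdot \bbp[u,v] \cdot \psi(w_2)), \\
\tau(\pi(w_1) \cdot \bbp[u,v] \cdot \psi(w_2)) & = \pi(w_1) \pi(v) h \psi(w_2) \\
& = \iota(\pi(w_1) \cdot \pi(v) \cdot \bbp[w_2]^{-1}), \\
\tau(\pi(w_1) \cdot \pi(v) \cdot \bbp[w_2]^{-1}) & = \pi(w_1 v) \pi(w_2) h = \pi(\tau \bbe) h.
\end{align*}
This shows the concatenation is well defined and that $\theta(\bbe)$ runs from $\pi(\iota \bbe)h$ to $\pi(\tau \bbe)h$.

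Finally, for a negative edge $\bbe^{-1}$, the claim reduces to the positive case since $\theta(\bbe^{-1}) = \theta(\bbe)^{-1}$ by definition, $\iota(\theta(\bbe)^{-1}) = \tau \theta(\bbe) = \pi(\tau \bbe) h = \pi(\iota \bbe^{-1})h$, and similarly $\tau(\theta(\bbe)^{-1}) = \pi(\tau \bbe^{-1})h$. There is no serious obstacle here; the only mild nuisance is bookkeeping of the left and right actions of $A^*$ on paths, which is handled by the identity $\iota(\alpha \cdot \bbp \cdot \beta) = \alpha \cdot \iota\bbp \cdot \beta$ and its dual for $\tau$.
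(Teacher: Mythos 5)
Your proof is correct and is essentially the argument the paper implicitly relies on (the paper omits the written proof, appealing instead to Figure~\ref{fig_path_thetaE}): it reduces to checking endpoints of the three sub-paths using $\iota\bbp[w] = \pi(w)h$, $\tau\bbp[w] = h\psi(w)$, the endpoints of $\bbp[u,v]$, and the homomorphism property of $\pi$, and then handling negative edges via $\theta(\bbe^{-1}) = \theta(\bbe)^{-1}$.
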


\subsection*{\boldmath (III) The paths $\Lambda_w$ (where $w \in B^*$).}

\noindent For every $w \in B^*$, we shall now define a path $\Lambda_w$ in $\Gamma (\gq)$ which will satisfy:
\[
\iota \Lambda_w \equiv w, \quad
\tau \Lambda_w \equiv \phi(1, \kappa(\pi(w)h, \eta) ) \equiv \phi(\pi(w)h).
\]
The definition is essentially extracted from the proof of Lemma~\ref{lem_5.9'}, where it was shown that
\[
\phi(1, \kappa( \pi(w)h, \eta)) = w
\]
is a consequence of the presentation $\gq$. To facilitate the definition of $\Lambda_w$ we introduce a little more notation.

\begin{defn}[Arrow notation]
Let $\gp$ be a monoid presentation and let $\Gamma(\gp)$ denote its derivation graph.
For any two paths $\bbp$, $\bbq$ in $\Gamma(\gp)$ we define:
\begin{enumerate}[(i)]
\item $\bbp \downarrow \bbq = (\bbp \cdot \iota \bbq) \circ (\tau \bbp \cdot \bbq) $;
\item $\bbp \uparrow \bbq = (\iota \bbp \cdot \bbq) \circ (\bbp \cdot \tau \bbq) $.
\end{enumerate}
\end{defn}

The following observations are easily verified, and so the proofs are omitted.

\begin{lem}
\label{lem_behaviour}
For paths $\bbp, \bbq, \bba, \bbb$ in $\Gamma(\gp)$ we have:
\begin{enumerate}
\item[\emph{(i)}] $\bbp \da (\bba \circ \bbb) = (\bbp \da \bba) \circ (\tau \bbp \cdot \bbb)$;
\item[\emph{(ii)}] $(\bba \circ \bbb) \da \bbp = (\bba \cdot \iota \bbp) \circ (\bbb \da \bbp)$;
\item[\emph{(iii)}] $\bbp \da (\bba \circ \bbb) \da \bbq =
[(\bbp \da \bba) \cdot \iota \bbq] \circ
[\tau \bbp \cdot (\bbb \da \bbq)]
$;
\item[\emph{(iv)}] $(\bbp \da \bbq )^{-1}= \bbp^{-1} \ua \bbq^{-1}$;
\item[\emph{(v)}] $\bbp \da \bbq \sim_0 \bbp \ua \bbq$.
\end{enumerate}
\end{lem}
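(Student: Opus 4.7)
The plan is to verify identities (i)-(iv) by direct expansion of definitions together with basic facts about how the left/right $A^*$-action distributes over path composition and interacts with inversion, and to prove (v) by induction on the number of edges in $\bbp$ and $\bbq$, reducing to the elementary case where it follows immediately from the defining paths of the Squier complex.

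For (i), I would start by recalling that $\iota(\bba \circ \bbb) \equiv \iota \bba$ and that the right action satisfies $s \cdot (\bba \circ \bbb) = (s \cdot \bba) \circ (s \cdot \bbb)$ for any word $s \in A^*$. Expanding the left-hand side gives $(\bbp \cdot \iota \bba) \circ (\tau \bbp \cdot \bba) \circ (\tau \bbp \cdot \bbb)$, and the right-hand side unfolds to the same expression after one application of the definition of $\da$. Identity (ii) is symmetric and follows by the dual calculation, and (iii) is obtained by first applying (i) to $\bbp \da [(\bba \circ \bbb) \da \bbq]$ and then (ii) to $(\bba \circ \bbb) \da \bbq$ (or the other way round; both associative simplifications agree).

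Identity (iv) reduces to the elementary calculation $(\bbe_1 \circ \bbe_2)^{-1} = \bbe_2^{-1} \circ \bbe_1^{-1}$ together with $(s \cdot \bba)^{-1} = s \cdot \bba^{-1}$ and the identities $\iota \bbp^{-1} \equiv \tau \bbp$, $\tau \bbp^{-1} \equiv \iota \bbp$. Writing out both sides letter by letter from the definitions of $\da$ and $\ua$ gives the equality.

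Identity (v) is the only part requiring a genuine argument. The base case is when both $\bbp$ and $\bbq$ are single edges, say $\bbe_1$ and $\bbe_2$. Then the closed path $(\bbe_1 \da \bbe_2) \circ (\bbe_1 \ua \bbe_2)^{-1}$ unfolds, using (iv) and the definitions, to
\[
(\bbe_1 \cdot \iota \bbe_2) \circ (\tau \bbe_1 \cdot \bbe_2) \circ (\bbe_1^{-1} \cdot \tau \bbe_2) \circ (\iota \bbe_1 \cdot \bbe_2^{-1}),
\]
which is precisely the defining path $[\bbe_1, \bbe_2]$ adjoined to form the Squier complex $\mathcal{D}(\gp)$, hence null-homotopic. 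It follows that $\bbe_1 \da \bbe_2 \sim_0 \bbe_1 \ua \bbe_2$. For the inductive step, suppose $\bbp = \bbp' \circ \bbe$ for some edge $\bbe$ (the case where $\bbq$ is decomposed is handled symmetrically using (ii)). By (i) we have $\bbp \da \bbq = (\bbp' \da \bbq) \circ (\bbe \cdot \tau \bbq) \circ \cdots$ after further splitting via (ii), while $\bbp \ua \bbq$ admits a parallel decomposition. Applying the inductive hypothesis to the subpaths $\bbp' \da \bbq$ and $\bbe \da \bbq$ (each involving strictly fewer edges) and reassembling yields $\bbp \da \bbq \sim_0 \bbp \ua \bbq$. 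The only subtle point, which is the main potential obstacle, is to arrange the bookkeeping so that the intermediate homotopies line up correctly along a common vertex, but this is taken care of by (iii), which tells us exactly how the composite behaves.
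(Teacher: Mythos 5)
The paper states this lemma without proof (``easily verified, and so the proofs are omitted''), so there is no internal argument to compare against; your direct verification is the natural one. Parts (i)--(iv) are handled correctly by unfolding the definition of $\da$, using distributivity of the $A^*$-actions over $\circ$, and the identities for $\iota$, $\tau$, and ${}^{-1}$. Your key observation for the base case of (v) --- that $(\bbe_1 \da \bbe_2) \circ (\bbe_1 \ua \bbe_2)^{-1}$ is literally the defining path $[\bbe_1,\bbe_2]$ of the Squier complex, hence null-homotopic --- is exactly right.

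One concrete slip in the inductive step of (v): having chosen to decompose $\bbp = \bbp' \circ \bbe$, the relevant identity is (ii), not (i), and the correct factorization is
\[
\bbp \da \bbq = (\bbp' \cdot \iota\bbq) \circ (\bbe \da \bbq),
\]
not $(\bbp' \da \bbq) \circ (\bbe \cdot \tau\bbq)$ as you wrote. The latter is not a subdivision of $\bbp \da \bbq$ at all; a short computation shows $(\bbp' \da \bbq) \circ (\bbe \cdot \tau\bbq) = (\bbp' \cdot \iota\bbq) \circ (\bbe \ua \bbq)$, which is only \emph{homotopic} to $\bbp \da \bbq$, so writing `` $=$ '' there begs the very thing being proved. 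The argument is easily repaired: first prove $\bbe \da \bbq \sim_0 \bbe \ua \bbq$ for a single edge $\bbe$ by induction on $|\bbq|$, then prove the general case by induction on $|\bbp|$, using (ii) to peel off one edge and the single-edge case to swap it past $\bbq$. Your overall plan is sound; the displayed equality in the inductive step just needs to be corrected.
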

Next we define
\[
\bbh
= (1, \; 1 = \phi(1, \kappa(h,\eta)), \; +1, 1 )
= \bba_{1 = \phi(1, \kappa(h,\eta))},
\]
an elementary edge in $\Gamma(\gq)$ corresponding to a relation of the form (R4), and
for each $b \in B$ define
\[
\bbb_b = (1, \; b = \phi(1, \kappa(\pi(b),\omega)), \; +1, 1 )
=
\bba_{b = \phi(1, \kappa(\pi(b),\omega))}
,
\]
an elementary edge in $\Gamma(\gq)$ corresponding to a relation of the form (R3).  
Set  $\Lambda_1 = \bbh$, and then for each $w \equiv b_1 \cdots b_k \in B^+$ define:
\[
\Lambda_w = \bbb_{b_1} \da \bbb_{b_2} \da \cdots \da \bbb_{b_k} \da \bbh,
\]
and
\[
\Lambda_w' = \bbb_{b_1} \da \bbb_{b_2} \da \cdots \da \bbb_{b_k}.
\]
\begin{lem}
\label{lem_Lambda}
For all $w \in B^*$, $\Lambda_w$ is a path in $\Gamma(\gq)$ from $w$ to $\phi(1, \kappa(\pi(w)h, \eta) )$.
\end{lem}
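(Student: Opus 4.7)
The plan is to proceed by induction on the length of $w$. The base case $|w|=0$ is immediate: $\Lambda_1 = \bbh$ is an edge from $1$ to $\phi(1,\kappa(h,\eta))$, and since $\pi(1) \equiv 1$, the target is exactly $\phi(1,\kappa(\pi(1)h,\eta))$ as required.

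For the inductive step, write $w \equiv b_1 \cdots b_k$ and set $\bbe_i := \bbb_{b_i}$, so $\iota \bbe_i \equiv b_i$ and $\tau \bbe_i \equiv \phi(1,\kappa(\pi(b_i),\omega))$. Using the definition $\bbp \da \bbq = (\bbp\cdot \iota\bbq)\circ(\tau\bbp\cdot\bbq)$, a short induction on $k$ shows that $\bbe_1 \da \bbe_2 \da \cdots \da \bbe_k$ (associated in the natural way) is a well-formed path from $b_1 b_2 \cdots b_k$ to
\[
\phi(1,\kappa(\pi(b_1),\omega))\,\phi(1,\kappa(\pi(b_2),\omega))\,\cdots\,\phi(1,\kappa(\pi(b_k),\omega)).
\]
Appending $\da\,\bbh$ then extends this to a path whose initial vertex is $w\cdot \iota\bbh \equiv w$ and whose terminal vertex is
\[
\phi(1,\kappa(\pi(b_1),\omega))\,\cdots\,\phi(1,\kappa(\pi(b_k),\omega))\,\phi(1,\kappa(h,\eta)).
\]

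It remains to identify this terminal word with $\phi(1,\kappa(\pi(w)h,\eta))$. This is the main computation, but it is essentially done already in the proof of Lemma~\ref{lem_5.9'}. The idea is to iterate Lemma~\ref{Ruskuc_Lemma5.2}(i) together with Lemma~\ref{Ruskuc_Lemma5.4}(i) (which gives $\pi(u)*\omega = \omega$ for every suffix $u$ of $\pi(w)$, and $h * \eta = \omega$) to obtain
\[
\kappa(\pi(w)h,\eta) \equiv \kappa(\pi(b_1),\omega)\,\kappa(\pi(b_2),\omega)\,\cdots\,\kappa(\pi(b_k),\omega)\,\kappa(h,\eta),
\]
and then apply Lemma~\ref{Ruskuc_Lemma5.1}(i) repeatedly, combined with Lemma~\ref{Ruskuc_Lemma5.4}(ii) (which gives $1\cdot \kappa(\pi(b_i),\omega) = 1$), to split this off as the product $\phi(1,\kappa(\pi(b_1),\omega))\cdots\phi(1,\kappa(\pi(b_k),\omega))\,\phi(1,\kappa(h,\eta))$, all evaluated at basepoint $1 \in \Lambda$.

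The only mild obstacle is bookkeeping: one must verify that at each step of the induction the edges composing the $\da$-product can legitimately be concatenated (i.e.\ the middle vertices match) and that each $\phi(1,\kappa(\pi(b_i),\omega))$ is a well-defined word in $B^*$, which is already guaranteed by Lemma~\ref{Ruskuc_Lemma5.4}. There is no new geometric or combinatorial content beyond unwinding the definition of $\da$, so the lemma follows by a direct calculation.
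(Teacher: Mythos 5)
Your argument is correct and follows essentially the same route as the paper: unwind the $\da$-product to read off the initial vertex as $w$ and the terminal vertex as $\phi(1,\kappa(\pi(b_1),\omega))\cdots\phi(1,\kappa(\pi(b_k),\omega))\,\phi(1,\kappa(h,\eta))$, then identify this with $\phi(1,\kappa(\pi(w)h,\eta))$ by reusing the computation from Lemma~\ref{lem_5.9'}. The paper's proof is just this direct calculation, without the framing as an induction, but the content is identical.
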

\begin{proof}
When $w$ is empty, this is immediate from the definition of $\Lambda_1 = \bbh$, since $\pi(1)\equiv 1$. Now suppose  $w \equiv b_1 b_2 \cdots b_k \in B^+$. Then
from the definition, we have
\[
\iota \Lambda_w = b_1 b_2 \cdots b_k 1 \equiv w,
\]
and, arguing as in the proof of Lemma~\ref{lem_5.9'}:
\begin{align*}
\tau \Lambda_w
& \equiv 
\phi(1, \kappa(\pi(b_1),\omega))
\phi(1, \kappa(\pi(b_2),\omega))
\cdots
\phi(1, \kappa(\pi(b_k),\omega))
\phi(1, \kappa(h,\eta))
\\
& \equiv
\phi(1, \kappa(\pi(b_1 b_2 b_3 \cdots b_k)h,\eta))
\\
& \equiv
\phi(1, \kappa(\pi(w)h,\eta)). \qedhere
\end{align*}
\end{proof}

\subsection*{(IV)-(V) An infinite trivialiser}

Let $Z$ denote the following (infinite) set of closed paths from the graph $\Gamma(\gq)$
\[
Z = \{
\bbe \circ \; \Lambda_{\tau \bbe} \circ \phi (\theta (\bbe))^{-1} \circ \Lambda_{\iota \bbe}^{-1}
\ \mbox{for $\bbe$ in $\Gamma(\gq)$}
 \}.
\]
Note that by Lemma~\ref{lem_thetaimage}, $\theta(\bbe)$ is a path from $\pi(\iota \bbe)h$ to $\pi(\tau \bbe)h$, which are both words representing an element of $H$, and so by Lemma~\ref{lem_phidefined}, $\phi(\theta(\bbe))$ is a path in $\Gamma(\bbq)$ from $\phi(\pi(\iota \bbe)h)$ to  $\phi(\pi(\tau \bbe)h)$. From this it is then easy to see that $Z$ is a set of closed paths in $\Gamma(\mathcal{Q})$. 

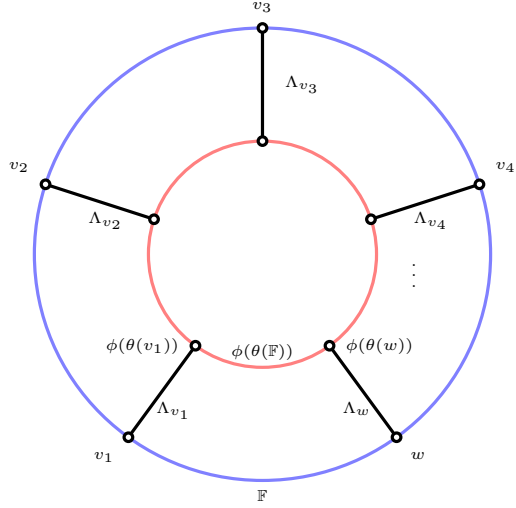
\begin{figure}
\begin{center}
\begin{tikzpicture}
[
very thick,
vertex/.style={circle,draw=black, fill=white, inner sep = 0.4mm}
]
\draw [blue!50] (0,0) circle (3cm);
\draw [red!50] (0,0) circle (1.5cm);
\draw (90:1.5cm) -- node[right] {\tiny \; $\Lambda_{v_3}$}  (90:3cm);
\draw (162:1.5cm) -- node[below] {\tiny \; $\Lambda_{v_2}$} (162:3cm);
\draw (234:1.5cm) -- node[below] {\tiny \; \; $\Lambda_{v_1}$} (234:3cm);
\draw (306:1.5cm) -- node[below] {\tiny $\Lambda_w$ \;} (306:3cm);
\draw (18:1.5cm) -- node[below] {\tiny \; $\Lambda_{v_4}$}  (18:3cm);
\node (v1) [vertex,label={90:{\tiny $v_3$}}] at (90:3cm) {};
\node (v2) [vertex,label={162:{\tiny $v_2$}}] at (162:3cm) {};
\node (v3) [vertex,label={234:{\tiny $v_1$}}] at (234:3cm) {};
\node (v4) [vertex,label={306:{\tiny $w$}}] at (306:3cm) {};
\node (v5) [vertex,label={18:{\tiny $v_4$}}] at (18:3cm) {};
\node (w1) [vertex,label={90:{}}] at (90:1.5cm) {};
\node (w2) [vertex,label={162:{}}] at (162:1.5cm) {};
\node (w3) [vertex,label={180:{\tiny $\phi(\theta(v_1))$}}] at (234:1.5cm) {};
\node (w4) [vertex,label={0:{\tiny $\phi(\theta(w))$}}] at (306:1.5cm) {};
\node (w5) [vertex,label={18:{}}] at (18:1.5cm) {};
\node at (355:2.0cm) {\tiny $\vdots$};
\node at (270:1.3cm) {\tiny $\phi(\theta(\bbf))$};
\node at (270:3.2cm) {\tiny $\bbf$};
\end{tikzpicture}
\end{center}
\caption{Trivialising paths in $\Gamma(\mathcal{Q})$.}
\label{fig_infinite_triv}
\end{figure}

\begin{lem}
\label{lem_infinitetriv}
The set $Z \cup Y_1$ is a homotopy base for $\Gamma(\gq)$.
\end{lem}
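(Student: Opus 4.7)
I will verify the homotopy base criterion from Lemma~\ref{lem_homgen}: namely, show that every closed path $\bbp$ in $\Gamma(\gq)$ is $\sim_{Z\cup Y_1}$-homotopic to the empty path. The argument has two stages. Stage~1 uses $Z$ to ``pull $\bbp$ back into $\Gamma(\gp)$'' via $\phi\circ\theta$; Stage~2 uses the fact that $X$ is a homotopy base for $\gp$, together with $Y_1$, to trivialise the resulting image in $\Gamma(\gq)$.

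\textbf{Stage 1.} Each defining path $\bbe \circ \Lambda_{\tau\bbe} \circ \phi(\theta(\bbe))^{-1} \circ \Lambda_{\iota\bbe}^{-1}\in Z$ yields at once the elementary equivalence $\bbe \sim_Z \Lambda_{\iota\bbe}\circ\phi(\theta(\bbe))\circ\Lambda_{\tau\bbe}^{-1}$. Writing $\bbp = \bbe_1\circ\cdots\circ\bbe_n$ and setting $w := \iota\bbp$, a telescoping argument (using $\tau\bbe_i=\iota\bbe_{i+1}$ so that each $\Lambda_{\tau\bbe_i}^{-1}\circ\Lambda_{\iota\bbe_{i+1}}$ cancels up to $\sim_0$) yields
\[
\bbp \;\sim_Z\; \Lambda_w \circ \phi(\theta(\bbp)) \circ \Lambda_w^{-1},
\]
where $\theta(\bbp) := \theta(\bbe_1)\circ\cdots\circ\theta(\bbe_n)$ is a closed path in $\Gamma(\gp)$ based at $\pi(w)h$. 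By Lemma~\ref{Ruskuc_Lemma5.3} this basepoint equals $h\psi(w)$ in $S$; since $\psi(w)\in T$ stabilises $H$ it lies in $H$, so $\phi(\theta(\bbp))$ is well-defined by Lemma~\ref{lem_phidefined}.

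\textbf{Stage 2.} Applying Lemma~\ref{lem_homgen} in $\Gamma(\gp)$ to the closed path $\theta(\bbp)$ produces words $v_i,w_i\in A^*$, paths $\bbp_i$ in $\Gamma(\gp)$ and elements $\bbq_i\in X\cup X^{-1}$ with
\[
\theta(\bbp)\;\sim_0\;\bbp_1^{-1}\circ(v_1\!\cdot\!\bbq_1\!\cdot\!w_1)\circ\bbp_1\circ\cdots\circ\bbp_N^{-1}\circ(v_N\!\cdot\!\bbq_N\!\cdot\!w_N)\circ\bbp_N.
\]
Every intermediate basepoint on the right-hand side represents the common element $\pi(w)h\in H$ of $S$, so $\phi$ is defined on each subpath; Corollary~\ref{corol_phihomotopy} then lets us apply $\phi$ to both sides and obtain an analogous decomposition of $\phi(\theta(\bbp))$ up to $\sim_0$. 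Lemma~\ref{lem_homotopy1} trivialises each middle factor $\phi(v_i\!\cdot\!\bbq_i\!\cdot\!w_i)$ modulo $Y_1$, after which each surviving adjacent pair $\phi(\bbp_i^{-1})\circ\phi(\bbp_i) = \phi(\bbp_i^{-1}\circ\bbp_i)$ collapses to an empty path by a further application of Corollary~\ref{corol_phihomotopy}. Thus $\phi(\theta(\bbp))\sim_{Y_1}1$; substituting into the output of Stage~1 gives $\bbp \sim_{Z\cup Y_1} \Lambda_w\circ\Lambda_w^{-1}\sim_0 1$, as required.

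\textbf{Main obstacle.} The delicate technical point throughout is that $\phi$ is only defined on paths whose initial vertex represents an element of $H$, so at every step one must verify that every sub-path to which $\phi$ is being applied satisfies this condition. This is precisely why the decomposition in Stage~2 is taken with respect to the closed path $\theta(\bbp)$ based at $\pi(w)h\in H$ rather than at an arbitrary vertex of $\Gamma(\gp)$; and it is also why Stage~1 must be carried out first, so that by the time $Y_1$ is invoked all paths in sight have their initial vertex in the connected component of an $H$-representative.
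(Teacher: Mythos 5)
Your proposal is correct and follows essentially the same approach as the paper: pull the closed path back to $\Gamma(\gp)$ via $Z$ and the $\Lambda$-conjugation, apply Lemma~\ref{lem_homgen} to $\theta$ of the path, then push forward with $\phi$ and trivialise each $X$-conjugate using $Y_1$ and cancel the conjugating paths. The only minor difference is that to collapse the pairs $\phi(\bbp_i^{-1})\circ\phi(\bbp_i)$ the paper cites \eqref{eqn_inversepaths} directly rather than invoking Corollary~\ref{corol_phihomotopy} again, but these routes are equivalent.
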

\begin{proof}
Let $\bbf$ be a closed path in $\Gamma(\gq)$ based at a vertex $w \in B^*$. We must show that $\bbf \sim_{Z \cup Y_1} 1_w$, the empty path at $w$.
For each edge $\bbe$ in the closed path $\bbf$ we have:
\[
\bbe \sim_Z \Lambda_{\iota \bbe} \circ \phi(\theta(\bbe)) \circ \Lambda_{\tau \bbe}^{-1},
\]
from which it easily follows that
\[
\bbf  \sim_Z \Lambda_w \circ \phi(\theta(\bbf)) \circ \Lambda_w^{-1}.
\]
(See Figure~\ref{fig_infinite_triv} for an illustration of this: each of the regions that lie between the inner circuit $\phi(\theta(\bbf))$ and the outer circuit $\bbf$, belongs to the set $Z$.)

Denote $\theta(\bbf)$ by $\bbp$. Now $\bbp$ is a closed path in $\Gamma(\gp)$ and $\iota \bbp$ is a word representing an element of $H$
(since $\theta(w) = \pi(w) h$ represents an element of $H$). Since $X$ is a homotopy base for $\gp$ it follows from Lemma~\ref{lem_homgen} that  there exist $v_i, w_i \in A^*$, paths $\bbp_i$ in $\Gamma$ and $\bbq_i \in X \cup X^{-1}$, for $i=1,\ldots,n$, \ $n \geq 0$, such that:
\[
\bbp
\sim_0
\bbp_1^{-1} \circ (v_1 \bbq_1 w_1) \circ \bbp_1 \circ \cdots \circ \bbp_n^{-1} \circ (v_n \bbq_n w_n) \circ \bbp_n
\]
in $\Gamma(\gp)$. Applying Corollary~\ref{corol_phihomotopy} then gives
\begin{eqnarray*}
\phi(\bbp)
& \sim_0 &
\phi(\bbp_1^{-1} \circ (v_1 \bbq_1 w_1) \circ \bbp_1 \circ \cdots \circ \bbp_n^{-1} \circ (v_n \bbq_n w_n) \circ \bbp_n)
\\
& = &
\phi(\bbp_1^{-1}) \circ \phi(v_1 \bbq_1 w_1) \circ \phi(\bbp_1) \circ \cdots \circ \phi(\bbp_n^{-1}) \circ \phi(v_n \bbq_n w_n) \circ \phi(\bbp_n).
\end{eqnarray*}
By Lemma~\ref{lem_homotopy1} each term $\phi(v_j \bbq_j w_j)$ is $\sim_{Y_1}$-homotopic to the empty path $1_{\iota \phi(\bbp_j)}$, which along with \eqref{eqn_inversepaths} shows that $\phi(\theta(\bbf)) = \phi(\bbp) \sim_{Y_1} 1_{\phi(\theta(w))}$ in $\Gamma(\gq)$. In conclusion
\[
\bbf
\sim_Z \Lambda_w \circ \phi(\theta(\bbf)) \circ \Lambda_w^{-1}
\sim_{Y_1} \Lambda_w \circ \Lambda_w^{-1} \sim_0 1_w
\]
completing the proof of the lemma.
\end{proof}

As already observed above, $Y_1$ is finite provided $X$, $J$ and $\Lambda$ are all finite. Our aim now is to replace $Z$ by a set $Y_2 \cup Y_3$ with the property that $Y_2 \cup Y_3$ is finite whenever
$X$, $J$ and $\Lambda$ are all finite.

\subsection*{\boldmath (VI) Replacing $Z$ by $Y_2 \cup Y_3$}

\noindent Recall that $h$ is a fixed word in $A^*$ representing an element of $H$. Define
\[
W = \{
\phi (1, \kappa(h,j)): j \in J, \; h * j \neq 0
\}
\subseteq B^*
\]
noting that this set of words is finite provided the index set $J$ is finite. Now, each word from $W$ represents an element of the Sch\"{u}tzenberger group $\mathcal{G}(H)$. So, for every $y \in W$ we may choose and fix a word $\hat{y} \in B^*$ representing the inverse of $y$ in the group $\mathcal{G}(H)$. Next choose and fix paths:
\begin{itemize}
\item $\bbd_y$ in $\Gamma(\mathcal{Q})$ from $y \hat{y}$ to the empty word, and
\item $\bbd_y^*$ in $\Gamma(\mathcal{Q})$ from $\hat{y} y$ to the empty word.
\end{itemize}
For each $y \in W$, define 
\[
\bby_y = (\bbd_y^{-1} \cdot y) \circ (y \cdot \bbd_y^*)
\]
which is a closed path in $\Gamma(\mathcal{Q})$ with $\iota \bby_y \equiv \tau \bby_y \equiv y$. The path $\bby_y$ is illustrated in Figure~\ref{fig_Yy}.
Set $Y_2 = \{ \bby_y: y \in W  \}$ noting that $Y_2$ is finite if $J$ is finite. 
\begin{figure}
\begin{center}
\scalebox{0.7}
{
\begin{tikzpicture}
[blackbox/.style={draw, fill=blue!20, rectangle, minimum height=1cm, minimum width=2cm},
dottedbox/.style={draw, rectangle, loosely dashed, minimum height=3.5cm, minimum width=4cm}]
\node at (1.5,1.25) [dottedbox] {};
\node at (1,2) [blackbox] {$\mathbb{D}_y^{-1}$};
\node at (2,0.5) [blackbox] {$\mathbb{D}_y^*$};
\draw (.333,-.5) -- (.333,1.5);
\draw (.5,1.5)  -- (.5,-.5) node[anchor=north]  {$y$};
\draw (.666,-.5) -- (.666,1.5);
\draw (1.333,1) -- (1.333,1.5);
\draw (1.5,1) -- (1.5,1.5);
\draw (1.666,1) -- (1.666,1.5);
\draw (2.333,1) -- (2.333,3);
\draw (2.5,1) -- (2.5,3) node[anchor=south]  {$y$};
\draw (2.666,1) -- (2.666,3);
\node at (1.8,1.25) {$\hat{y}$};
\end{tikzpicture}
}
\end{center}
\caption{The path $\mathbb{Y}_y$.}
\label{fig_Yy}
\end{figure}
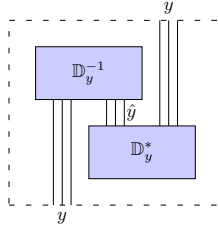
The following lemma demonstrates how we intend to use the paths $Y_2$.
\begin{lem}
\label{lem_form}
Let $\bbp$ and $\bbf$ be paths in $\Gamma (\gq)$ such that there exist $y \in W$ and $\alpha, \beta, \gamma \in B^*$ with $\tau \bbp \equiv y \gamma$, $\iota \bbf \equiv \alpha y$, and $\tau \bbf \equiv \beta y$.
Then
\[
(\alpha \cdot \bbp) \circ (\bbf \cdot \gamma) \circ (\beta \cdot \bbp^{-1}) \sim_{Y_2}
[
(\alpha \cdot {\bbd_y}^{-1}) \circ
(\bbf \cdot \hat{y}) \circ
(\beta \cdot \bbd_y)
] \cdot \iota \bbp.
\]
\end{lem}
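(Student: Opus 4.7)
The plan is to use the closed path $\bby_y = (\bbd_y^{-1} \cdot y) \circ (y \cdot \bbd_y^*) \in Y_2$, which is $\sim_{Y_2}$-trivial, together with repeated applications of the diamond homotopies from the defining $2$-cells of the Squier complex $\mathcal{D}(\gq)$. The whole manipulation is driven by the observation that whenever two (sub)paths act on disjoint subwords of a common word, they may be swapped past one another up to $\sim_0$.

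First I would establish the intermediate identity
\[
\alpha \cdot \bbp \sim_{Y_2} (\alpha \cdot \bbd_y^{-1} \cdot \iota\bbp) \circ (\alpha y\hat{y} \cdot \bbp) \circ (\alpha y \cdot \bbd_y^* \cdot \gamma).
\]
To see this, post-compose $\alpha \cdot \bbp$ at the vertex $\alpha y \gamma$ with the closed path $\alpha \cdot \bby_y \cdot \gamma = (\alpha \cdot \bbd_y^{-1} \cdot y\gamma) \circ (\alpha y \cdot \bbd_y^* \cdot \gamma)$, which is $\sim_{Y_2}$-trivial by definition of $Y_2$; then apply a single diamond swap interchanging $\bbp$ (which rewrites $\iota\bbp$ to $y\gamma$ on the right of $\alpha$) with $\bbd_y^{-1}$ (which rewrites the empty word between $\alpha$ and $y\gamma$ to $y\hat{y}$). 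Since these two paths act on disjoint subwords the swap is a $\sim_0$-move. Taking the formal inverse yields the dual identity
\[
\beta \cdot \bbp^{-1} \sim_{Y_2} (\beta y \cdot \bbd_y^{*-1} \cdot \gamma) \circ (\beta y\hat{y} \cdot \bbp^{-1}) \circ (\beta \cdot \bbd_y \cdot \iota\bbp).
\]

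Next I would substitute these two identities into the left-hand side of the lemma, obtaining
\[
(\alpha \cdot \bbd_y^{-1} \cdot \iota\bbp) \circ (\alpha y\hat{y} \cdot \bbp) \circ (\alpha y \cdot \bbd_y^* \cdot \gamma) \circ (\bbf \cdot \gamma) \circ (\beta y \cdot \bbd_y^{*-1} \cdot \gamma) \circ (\beta y\hat{y} \cdot \bbp^{-1}) \circ (\beta \cdot \bbd_y \cdot \iota\bbp).
\]
Two further diamond swaps now collapse the middle. First swap $\bbf$ past $\bbd_y^*$ (they act on the ``outer'' subword $\alpha y \leftrightarrow \beta y$ and the ``middle'' subword $\hat{y}y \leftrightarrow 1$ respectively, so they are disjoint); this rewrites the middle as $(\bbf \cdot \hat{y}y\gamma) \circ (\beta y \cdot \bbd_y^* \cdot \gamma) \circ (\beta y \cdot \bbd_y^{*-1} \cdot \gamma)$, in which the adjacent $\bbd_y^* \circ \bbd_y^{*-1}$ is $\sim_0$-trivial and disappears. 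Then the same style of diamond swaps $\bbf$ past $\bbp$, producing $(\bbf \cdot \hat{y}\iota\bbp) \circ (\beta y\hat{y} \cdot \bbp) \circ (\beta y\hat{y} \cdot \bbp^{-1})$, and the adjacent $\bbp \circ \bbp^{-1}$ again vanishes. What remains is
\[
(\alpha \cdot \bbd_y^{-1} \cdot \iota\bbp) \circ (\bbf \cdot \hat{y} \cdot \iota\bbp) \circ (\beta \cdot \bbd_y \cdot \iota\bbp),
\]
which is precisely $[(\alpha \cdot \bbd_y^{-1}) \circ (\bbf \cdot \hat{y}) \circ (\beta \cdot \bbd_y)] \cdot \iota\bbp$.

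The whole argument is really just careful bookkeeping: at each step one must verify that the two paths being interchanged act on truly disjoint positions of the ambient word, but this is automatic because $\bbf$ only modifies the $\alpha y \leftrightarrow \beta y$ piece on the left while $\bbp$, $\bbd_y$ and $\bbd_y^*$ only modify pieces to its right. The only mildly delicate point is keeping the sources and targets of the intermediate paths consistent so that all compositions are defined; beyond that there is no real obstacle.
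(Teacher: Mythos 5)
Your proof is correct and takes essentially the same approach as the paper: insert the $Y_2$ closed loops $\bby_y$ and $\bby_y^{-1}$ at the vertex $\alpha y\gamma$ (resp.\ $\beta y\gamma$), expand them in terms of $\bbd_y$ and $\bbd_y^*$, and then use the disjointness $\sim_0$-swaps (the $\da/\ua$ moves of Lemma~\ref{lem_behaviour}(v)) to bring $\bbd_y^*$ adjacent to $\bbd_y^{*-1}$ and $\bbp$ adjacent to $\bbp^{-1}$, cancelling both. The only difference is bookkeeping: you first repackage $\alpha\cdot\bbp$ and $\beta\cdot\bbp^{-1}$ separately via intermediate identities and then swap $\bbf$ twice, whereas the paper does the insertion in one pass and finishes with a single swap of $\bbp$ past the entire middle block $(\alpha\cdot\bbd_y^{-1})\circ(\bbf\cdot\hat{y})\circ(\beta\cdot\bbd_y)$.
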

\begin{proof}
The key steps in the proof are laid out in Figures~\ref{fig_step0}--\ref{fig_step3}. 
First we introduce the closed paths $\bby_y$ and $\bby_y^{-1}$ (Step 1). 
Then we cancel $\bbd_y^*$ with its inverse (Step 2). This then allows $\bbp$ to cancel with $\bbp^{-1}$ (Step 3).
Symbolically the proof takes the following form:
\begin{align*}
& \;\phantom{\sim_{Y_2}}\; & & (\alpha \cdot \bbp) \circ (\bbf \cdot \gamma) \circ (\beta \cdot \bbp^{-1}) & \\
& \sim_{Y_2} & &
(\alpha \cdot \bbp) \circ (\alpha \cdot \bby_y \cdot \gamma) \circ (\bbf \cdot \gamma) \circ (\beta \cdot \bby_y^{-1} \cdot \gamma) \circ (\beta \cdot \bbp^{-1})  \quad \quad \quad \quad \quad  \mbox{(Step 1)}
&  \\
& = & &
(\alpha \cdot \bbp) \circ
(\alpha \cdot \bbd_y^{-1} \cdot y \gamma) \circ
(\alpha y \cdot \bbd_y^* \cdot \gamma)
\circ
\underline{
(\bbf \cdot \gamma) \circ
(\beta y \cdot {\bbd_y^*}^{-1} \cdot \gamma)
}
\circ
(\beta \cdot \bbd_y \cdot y\gamma)
\circ (\beta \cdot \bbp^{-1}) \\
& \sim_0 & &
(\alpha \cdot \bbp) \circ
(\alpha \cdot \bbd_y^{-1} \cdot y \gamma) \circ
\underline{
(\alpha y \cdot \bbd_y^* \cdot \gamma)
\circ
(\alpha y \cdot {\bbd_y^*}^{-1} \cdot \gamma)
}
\circ
(\bbf \cdot \hat{y} y \gamma)
\circ
(\beta \cdot \bbd_y \cdot y\gamma)
\circ (\beta \cdot \bbp^{-1}) \\
& \sim_0 & &
(\alpha \cdot \bbp) \circ
\underline{
(\alpha \cdot \bbd_y^{-1} \cdot y \gamma)
\circ
(\bbf \cdot \hat{y} y \gamma)
\circ
(\beta \cdot \bbd_y \cdot y\gamma)
}
\circ (\beta \cdot \bbp^{-1}) \quad \quad \quad \mbox{(Step 2)} &  \\
& = & & 
\underline{
(\alpha \cdot \bbp) \circ \;
([(\alpha \cdot \bbd_y^{-1}) \circ (\bbf \cdot \hat{y}) \circ (\beta \cdot \bbd_y)] \cdot y\gamma) \;
}
\circ
(\beta \cdot \bbp^{-1})
\\
& \sim_0 & &  
([(\alpha \cdot \bbd_y^{-1}) \circ (\bbf \cdot \hat{y}) \circ (\beta \cdot \bbd_y)] \cdot \iota \bbp ) \;
\circ
\underline{
(\beta \cdot \bbp)
\circ (\beta \cdot \bbp^{-1})
} \\
& = & & 
[
(\alpha \cdot {\bbd_y}^{-1}) \circ
(\bbf \cdot \hat{y}) \circ
(\beta \cdot \bbd_y)
] \cdot \iota \bbp \quad \quad \quad  \quad \quad \quad   \quad \quad \quad \quad   \quad \quad  \quad \; \;  \mbox{(Step 3)}
& 
\end{align*}
as required (we have underlined the places where the homotopy relations are being applied in this argument). 
\end{proof}
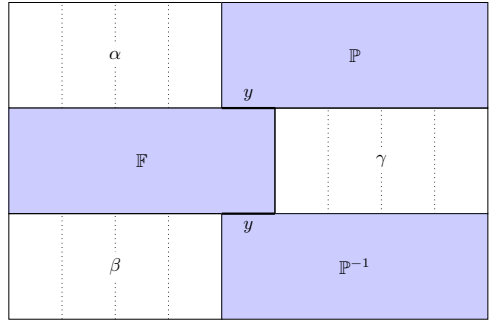
\begin{figure}
\begin{center}
\scalebox{0.7}
{
\begin{tikzpicture}
[blackbox/.style={draw, fill=blue!20, rectangle, minimum height=2cm, minimum width=5cm},
dottedbox/.style={draw, rectangle, loosely dashed, minimum height=3.5cm, minimum width=4cm},
clearbox/.style={draw, rectangle, minimum height=2cm, minimum width=4cm}]
\draw[dotted] (2,5) -- (2,7);
\draw[dotted] (3,5) -- (3,7);
\draw[dotted] (4,5) -- (4,7);
\draw[dotted] (2,1) -- (2,3);
\draw[dotted] (3,1) -- (3,3);
\draw[dotted] (4,1) -- (4,3);
\draw[dotted] (7,3) -- (7,5);
\draw[dotted] (8,3) -- (8,5);
\draw[dotted] (9,3) -- (9,5);
\node[clearbox]  (topleft) at (3,6) {};
\node[blackbox] (topright) at (7.5,6) {$\mathbb{P}$};
\node[blackbox] (midleft) at (3.5,4)  {$\mathbb{F}$};
\node[clearbox] (midright) at (8,4)  {};
\node [fill=white] at (8,4) {$\gamma$};
\node [fill=white] at (3,6) {$\alpha$};
\node [fill=white] at (3,2) {$\beta$};
\node[clearbox] (bottomleft) at (3,2) {};
\node[blackbox] (bottomright) at (7.5,2) {$\mathbb{P}^{-1}$};
\draw[very thick] (5,5) -- node[above]  {$y$} (6,5);
\draw[very thick] (5,3) -- node[below]  {$y$} (6,3);
\end{tikzpicture}
}
\end{center}
\caption{Proof of Lemma~\ref{lem_form}: Initial configuration.}
\label{fig_step0}
\end{figure}

\begin{figure}
\begin{center}
\begin{tabular}{cc}
\scalebox{0.7}
{
\begin{tikzpicture}
[blackbox/.style={draw, fill=blue!20, rectangle, minimum height=2cm, minimum width=5cm},
bigbox/.style={draw, rectangle, minimum height=11cm, minimum width=9cm},
clearbox/.style={draw, rectangle, minimum height=2cm, minimum width=4cm},
tinybox/.style={draw, fill=blue!20, rectangle, minimum height=.6cm, minimum width=1.1cm}
]
\draw[thick] (5.5,8.5) --  (5.5,9);
\node at (5.6,8.75) {\begin{tiny} $\hat{y}$ \end{tiny}};
\draw[thick, rounded corners=20pt] (5.5,10) -- (6,10)  --  (6,8.5);
\draw[thick, rounded corners=20pt] (5.5,7.5) -- (5,7.5) -- (5,9);
\draw[thick] (5.5,4) --  (5.5,4.5);
\node at (5.6,4.25) {\begin{tiny} $\hat{y}$ \end{tiny}};
\draw[thick, rounded corners=20pt] (5.5,5.5) -- (5,5.5)  --  (5,4);
\draw[thick, rounded corners=20pt] (5.5,3) -- (6,3) -- (6,4.5);
\node[bigbox] (border) at (5.5,6.5)  {};
\node[tinybox] (D1) at (5.25,9.25)  {\begin{tiny} $\mathbb{D}_y^{-1}$ \end{tiny}};
\node[tinybox] (D2) at (5.75,8.25)  {\begin{tiny} $\mathbb{D}_y^*$ \end{tiny}};
\node[tinybox] (D3) at (5.75,4.75)  {\begin{tiny} ${\mathbb{D}_y^*}^{-1}$ \end{tiny}};
\node[tinybox] (D4) at (5.25,3.75)  {\begin{tiny} $\mathbb{D}_y$ \end{tiny}};
\draw[dotted] (2,7.5) -- (2,12);
\draw[dotted] (3,7.5) -- (3,12);
\draw[dotted] (4,7.5) -- (4,12);
\draw[dotted] (2,1) -- (2,5.5);
\draw[dotted] (3,1) -- (3,5.5);
\draw[dotted] (4,1) -- (4,5.5);
\draw[dotted] (7,3) -- (7,10);
\draw[dotted] (8,3) -- (8,10);
\draw[dotted] (9,3) -- (9,10);
\node[clearbox]  (topleft) at (3,11) {};
\node[blackbox] (topright) at (7.5,11) {$\mathbb{P}$};
\node[blackbox] (midleft) at (3.5,6.5)  {$\mathbb{F}$};
\node[clearbox] (midright) at (8,6.5)  {};
\node [fill=white] at (8,6.5) {$\gamma$};
\node [fill=white] at (8,8.75) {$\gamma$};
\node [fill=white] at (8,4.25) {$\gamma$};
\node [fill=white] at (3,11) {$\alpha$};
\node [fill=white] at (3,8.75) {$\alpha$};
\node [fill=white] at (3,2) {$\beta$};
\node [fill=white] at (3,4.25) {$\beta$};
\node[clearbox] (bottomleft) at (3,2) {};
\node[blackbox] (bottomright) at (7.5,2) {$\mathbb{P}^{-1}$};
\draw[very thick] (5,7.5) -- node[below]  {$y$} (6,7.5);
\draw[very thick] (5,5.5) -- node[above]  {$y$} (6,5.5);
\draw[very thick] (5,10) -- node[above]  {$y$} (6,10);
\draw[very thick] (5,3) -- node[below]  {$y$} (6,3);
\end{tikzpicture}
}
&
\scalebox{0.7}
{
\begin{tikzpicture}
[blackbox/.style={draw, fill=blue!20, rectangle, minimum height=2cm, minimum width=5cm},
bigbox/.style={draw, rectangle, minimum height=11cm, minimum width=9cm},
clearbox/.style={draw, rectangle, minimum height=2cm, minimum width=4cm},
tinybox/.style={draw, fill=blue!20, rectangle, minimum height=.6cm, minimum width=1.1cm}
]
\draw[thick, rounded corners=25pt] (5.5,9) -- (6.2,9) -- (6.25,7.5);
\node at (5.6,8.75) {\begin{tiny} $\hat{y}$ \end{tiny}};
\draw[thick, rounded corners=20pt] (5.5,10) -- (6.5,9)  --  (6.5,7.5);
\draw[thick, rounded corners=20pt] (5.5,7.5) -- (5,7.5) -- (5,9);
\draw[thick] (6.25,7.5) -- (6.25,5.5);
\draw[thick] (6.5,7.5) -- (6.5,5.5);
\draw[thick, rounded corners=25pt] (5.5,4) -- (6.2,4) -- (6.25,5.5);
\node at (5.6,4.25) {\begin{tiny} $\hat{y}$ \end{tiny}};
\draw[thick, rounded corners=20pt] (5.5,5.5) -- (5,5.5)  --  (5,4);
\draw[thick, rounded corners=20pt] (5.5,3) -- (6.5,4) -- (6.5,5.5);
\node[bigbox] (border) at (5.5,6.5)  {};
\node[tinybox] (D1) at (5.25,9.25)  {\begin{tiny} $\mathbb{D}_y^{-1}$ \end{tiny}};
\node[tinybox] (D4) at (5.25,3.75)  {\begin{tiny} $\mathbb{D}_y$ \end{tiny}};
\draw[dotted] (2,7.5) -- (2,12);
\draw[dotted] (3,7.5) -- (3,12);
\draw[dotted] (4,7.5) -- (4,12);
\draw[dotted] (2,1) -- (2,5.5);
\draw[dotted] (3,1) -- (3,5.5);
\draw[dotted] (4,1) -- (4,5.5);
\draw[dotted] (7,3) -- (7,10);
\draw[dotted] (8,3) -- (8,10);
\draw[dotted] (9,3) -- (9,10);
\node[clearbox]  (topleft) at (3,11) {};
\node[blackbox] (topright) at (7.5,11) {$\mathbb{P}$};
\node[blackbox] (midleft) at (3.5,6.5)  {$\mathbb{F}$};
\node[clearbox] (midright) at (8,6.5)  {};
\node [fill=white] at (8,6.5) {$\gamma$};
\node [fill=white] at (8,8.75) {$\gamma$};
\node [fill=white] at (8,4.25) {$\gamma$};
\node [fill=white] at (3,11) {$\alpha$};
\node [fill=white] at (3,8.75) {$\alpha$};
\node [fill=white] at (3,2) {$\beta$};
\node [fill=white] at (3,4.25) {$\beta$};
\node[clearbox] (bottomleft) at (3,2) {};
\node[blackbox] (bottomright) at (7.5,2) {$\mathbb{P}^{-1}$};
\draw[very thick] (5,7.5) -- node[below]  {$y$} (6,7.5);
\draw[very thick] (5,5.5) -- node[above]  {$y$} (6,5.5);
\draw[very thick] (5,10) -- node[above]  {$y$} (6,10);
\draw[very thick] (5,3) -- node[below]  {$y$} (6,3);
\end{tikzpicture}
}
\end{tabular}
\end{center}
\caption{Proof of Lemma~\ref{lem_form}: Steps 1 and 2.}
\label{fig_step2}
\end{figure}
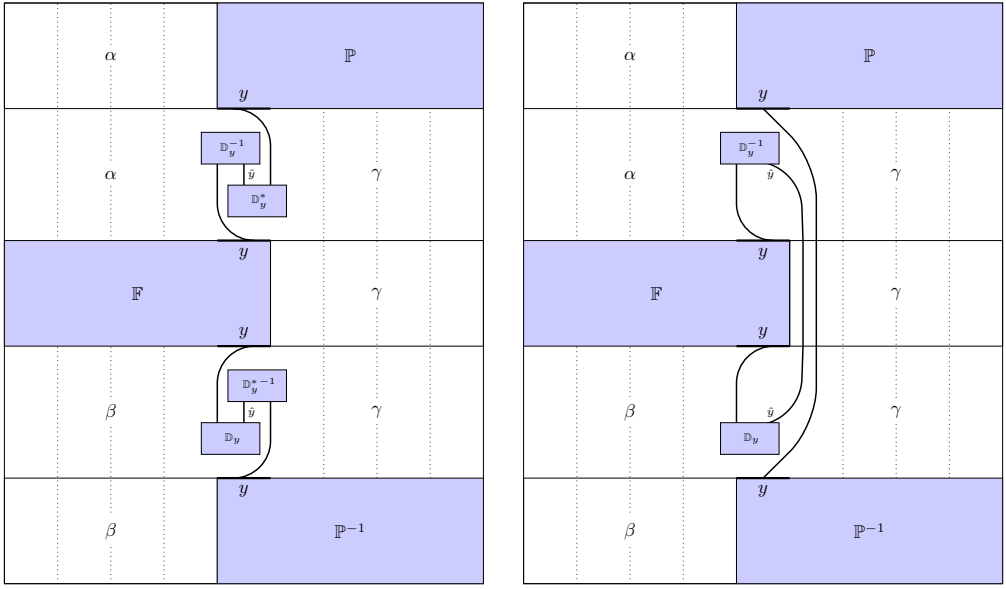

To simplify expressions appearing below we introduce the notation $\phi_\omega(\pi(u)) = \phi(1, \kappa(\pi(u), \omega))$.
Note that by Lemmas~\ref{Ruskuc_Lemma5.1}(i), ~\ref{Ruskuc_Lemma5.2}(i) and \ref{Ruskuc_Lemma5.4} we have
\begin{equation}
\label{omega_eq}
\phi_\omega(\pi(w_1w_2))=\phi_\omega(\pi(w_1))\phi_\omega(\pi(w_2)),
\end{equation}
for all $w_1,w_2\in B^*$.

The following lemma is crucial for the proof of our main result. It provides an important decomposition for $\phi(\theta(\bbe))$ where $\bbe$ is an arbitrary edge of $\Gamma(\mathcal{Q})$. 

\begin{lem}
\label{lem_reducing}
Let $\bbe = (w_1, (u=v), \epsilon, w_2) \in \Gamma(\gq)$.
Then $\phi (\theta (\bbe))$ is $\sim_{Y_2}$-homotopic to
\[
\phi_\omega(\pi(w_1))
\cdot
\;
[
(\phi_\omega(\pi(u)) \cdot \bbd_y^{-1}) \circ
(\phi(1,\kappa(\bbp[u,v]^\epsilon, \psi(w_2) * \eta )) \cdot \hat{y}) \circ
(\phi_\omega(\pi(v)) \cdot \bbd_y)
]
\;
\cdot
\phi(\pi(w_2)h)
\]
where $y \equiv \phi(1, \kappa(h, \psi(w_2) * \eta)$.
\end{lem}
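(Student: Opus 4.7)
The plan is to handle the case $\epsilon = +1$ directly; the case $\epsilon = -1$ is analogous (alternatively, it follows from the $+1$ case via the convention $\theta(\bbe^{-1}) = \theta(\bbe)^{-1}$ and \eqref{eqn_inversepaths}). So fix $\bbe = (w_1, u=v, +1, w_2)$ and write $\theta(\bbe) = \pi(w_1) \cdot M$, where
\[
M = (\pi(u)\cdot\bbp[w_2]) \circ (\bbp[u,v]\cdot\psi(w_2)) \circ (\pi(v)\cdot\bbp[w_2]^{-1}).
\]
The first step is to peel off $\phi_\omega(\pi(w_1))$ as a left factor. Since $\iota M \equiv \pi(uw_2)h$ and, using Lemmas~\ref{Ruskuc_Lemma5.3} and \ref{Ruskuc_Lemma5.4}(i), $\pi(w_1uw_2)h$ represents an element of $H$, Corollary~\ref{corol_useful}(i) applies; simplifying via $\pi(uw_2)h*\eta = \omega$ and Lemma~\ref{Ruskuc_Lemma5.4}(ii) (which collapses $1\cdot\kappa(\pi(w_1),\omega)$ to $1$) yields $\phi(\theta(\bbe)) = \phi_\omega(\pi(w_1))\cdot\phi(M)$.

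Next I expand $\phi(M)$ as the composition of $\phi$ applied to its three constituent subpaths. A further application of Corollary~\ref{corol_useful}(i) gives $\phi(\pi(u)\cdot\bbp[w_2]) = \phi_\omega(\pi(u))\cdot\phi(\bbp[w_2])$, and combined with \eqref{eqn_inversepaths} also $\phi(\pi(v)\cdot\bbp[w_2]^{-1}) = \phi_\omega(\pi(v))\cdot\phi(\bbp[w_2])^{-1}$. For the middle piece, since $\pi(u)h\psi(w_2) = \pi(uw_2)h \in H$, Corollary~\ref{corol_useful}(ii) yields
\[
\phi(\bbp[u,v]\cdot\psi(w_2)) = \phi(1,\kappa(\bbp[u,v],\psi(w_2)*\eta)) \cdot \phi(1\cdot\kappa(\pi(u)h,\psi(w_2)*\eta),\kappa(\psi(w_2),\eta)).
\]
Because $h\psi(w_2)*\eta = \omega$, Lemmas~\ref{Ruskuc_Lemma5.2}(i) and \ref{Ruskuc_Lemma5.4}(ii) give $1\cdot\kappa(\pi(u)h,\psi(w_2)*\eta) = 1\cdot\kappa(h,\psi(w_2)*\eta)$, so the right factor reduces to $\gamma := \phi(1\cdot\kappa(h,\psi(w_2)*\eta),\kappa(\psi(w_2),\eta))$.

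The concluding step is to recognise the resulting form of $\phi(M)$ as precisely the left-hand side of Lemma~\ref{lem_form} with $\alpha = \phi_\omega(\pi(u))$, $\beta = \phi_\omega(\pi(v))$, $\bbp = \phi(\bbp[w_2])$, $\bbf = \phi(1,\kappa(\bbp[u,v],\psi(w_2)*\eta))$ and $y = \phi(1,\kappa(h,\psi(w_2)*\eta))$. The three matching conditions $\tau\bbp \equiv y\gamma$, $\iota\bbf \equiv \alpha y$ and $\tau\bbf \equiv \beta y$ are routine telescopings using Lemmas~\ref{Ruskuc_Lemma5.1}(i), \ref{Ruskuc_Lemma5.2}(i) and \ref{Ruskuc_Lemma5.4}(ii) together with $h\psi(w_2)*\eta = \omega$; and $y \in W$ holds because $\psi(w_2)*\eta$ sits in the orbit $K$ of $R_\eta$ (as $R_{\psi(w_2)*\eta} = \psi(w_2)*R_\eta$) and in $I$ (as $h*(\psi(w_2)*\eta) = \omega \neq 0$), hence in $J$. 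Lemma~\ref{lem_form} then delivers $\phi(M) \sim_{Y_2} [(\alpha\cdot\bbd_y^{-1})\circ(\bbf\cdot\hat{y})\circ(\beta\cdot\bbd_y)]\cdot\iota\bbp$, and since $\iota\bbp = \phi(\pi(w_2)h)$, left-multiplication by $\phi_\omega(\pi(w_1))$ produces the stated expression. The main obstacle is the careful bookkeeping needed to match up the various $\kappa$- and $\phi$-simplifications with the hypotheses of Corollary~\ref{corol_useful}, Lemma~\ref{lem_expansingphi} and Lemma~\ref{lem_form}, and to verify membership $y \in W$; the underlying algebra is routine.
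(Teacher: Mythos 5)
Your proof is correct and takes essentially the same approach as the paper: decompose $\theta(\bbe)$ into the three constituent subpaths, apply Corollary~\ref{corol_useful} (with the $1\cdot\kappa(\cdot,\omega)=1$ simplification from Lemma~\ref{Ruskuc_Lemma5.4}(ii)) to bring $\phi(\theta(\bbe))$ into the form required by the hypotheses of Lemma~\ref{lem_form}, verify $y\in W$, and invoke Lemma~\ref{lem_form}. The only small difference is that you strip off $\phi_\omega(\pi(w_1))$ as a left multiplier at the outset and then apply Lemma~\ref{lem_form} to $\phi(M)$ with $\alpha=\phi_\omega(\pi(u))$, $\beta=\phi_\omega(\pi(v))$, whereas the paper carries $\pi(w_1)$ through the application of Lemma~\ref{lem_form} (using $\alpha=\phi_\omega(\pi(w_1u))$, $\beta=\phi_\omega(\pi(w_1v))$, $\bbf=\phi(1,\kappa(\pi(w_1)\cdot\bbp[u,v],\psi(w_2)*\eta))$) and only factors out $\phi_\omega(\pi(w_1))$ at the very end via \eqref{omega_eq}; this is a purely cosmetic reordering of the same computation.
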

%
%
%
\begin{figure}
\begin{center}
\scalebox{0.7}
{
\begin{tikzpicture}
[blackbox/.style={draw, fill=blue!20, rectangle, minimum height=2cm, minimum width=5cm},
bigboxleft/.style={draw, rectangle, minimum height=7cm, minimum width=5.5cm},
bigboxright/.style={draw, rectangle, minimum height=7cm, minimum width=2.5cm},
clearbox/.style={draw, rectangle, minimum height=2cm, minimum width=4cm},
tinybox/.style={draw, fill=blue!20, rectangle, minimum height=.6cm, minimum width=1.1cm}
]
\node[bigboxleft] at (3.75,6.5)  {};
\node[bigboxright] at (8.5,6.5)  {};
\draw[thick, rounded corners=25pt] (5.5,9) -- (6.2,9) -- (6.25,7.5);
\node at (5.6,8.75) {\begin{tiny} $\hat{y}$ \end{tiny}};
\draw[thick, rounded corners=20pt] (5.5,7.5) -- (5,7.5) -- (5,9);
\draw[thick] (6.25,7.5) -- (6.25,5.5);
\draw[thick, rounded corners=25pt] (5.5,4) -- (6.2,4) -- (6.25,5.5);
\node at (5.6,4.25) {\begin{tiny} $\hat{y}$ \end{tiny}};
\draw[thick, rounded corners=20pt] (5.5,5.5) -- (5,5.5)  --  (5,4);
\node[tinybox] (D1) at (5.25,9.25)  {\begin{tiny} $\mathbb{D}_y^{-1}$ \end{tiny}};
\node[tinybox] (D4) at (5.25,3.75)  {\begin{tiny} $\mathbb{D}_y$ \end{tiny}};
\draw[dotted] (2,7.5) -- (2,10);
\draw[dotted] (3,7.5) -- (3,10);
\draw[dotted] (4,7.5) -- (4,10);
\draw[dotted] (2,3) -- (2,5.5);
\draw[dotted] (3,3) -- (3,5.5);
\draw[dotted] (4,3) -- (4,5.5);
\draw[dotted] (7.5,3) -- (7.5,10);
\draw[dotted] (8.5,3) -- (8.5,10);
\draw[dotted] (9.5,3) -- (9.5,10);
\node[blackbox] (midleft) at (3.5,6.5)  {$\mathbb{F}$};
\node [fill=white] at (8.5,6.5) {$\iota \mathbb{P}$};
\node [fill=white] at (8.5,8.75) {$\iota \mathbb{P}$};
\node [fill=white] at (8.5,4.25) {$\iota \mathbb{P}$};
\node [fill=white] at (3,8.75) {$\alpha$};
\node [fill=white] at (3,4.25) {$\beta$};
\draw[very thick] (5,7.5) -- node[below]  {$y$} (6,7.5);
\draw[very thick] (5,5.5) -- node[above]  {$y$} (6,5.5);
\end{tikzpicture}
}
\end{center}
\caption{Proof of Lemma~\ref{lem_form}: Step 3.}
\label{fig_step3}
\end{figure}
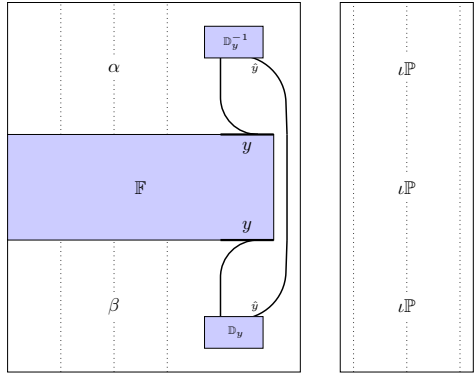
\begin{proof}
Let $\bbe = (w_1, (u=v), \epsilon, w_2)$ be an arbitrary edge of $\Gamma(\gq)$. Let us suppose throughout the argument that $\epsilon = +1$. The proof in the case $\epsilon=-1$ is analogous. 

The idea of the proof is as follows. We shall show that $\phi(\theta(\bbe))$ has the form illustrated in Figure~\ref{fig_phi_theta_E}. Comparing with Figure~\ref{fig_step0} we see that $\phi(\theta(\bbe))$ satisfies the hypotheses of Lemma~\ref{lem_form}, which we then apply to prove the lemma.  

Firstly, by definition $\theta(\bbe) = \bbq_1 \circ \bbq_2 \circ \bbq_3$ where:
$$
\bbq_1 = \pi(w_1) \pi(u) \cdot \bbp[w_2],
\; \quad \;
\bbq_2 = \pi(w_1) \cdot \bbp[u,v] \cdot \psi(w_2),
$$
$$
\bbq_3 = \pi(w_1)\pi(v) \cdot \bbp[w_2]^{-1}.
$$
Now consider $\phi(\theta(\bbe))$. By definition of $\theta$, all the vertices in each of the paths $\bbq_1$, $\bbq_2$ and $\bbq_3$ are words representing an element of $H$, and so by Lemma~\ref{lem_phidefined} each of $\phi(\bbq_1)$, $\phi(\bbq_2)$ and $\phi(\bbq_3)$ is defined. These paths then decompose as follows:
\begin{figure}
\begin{center}
\scalebox{0.8}
{
\begin{tikzpicture}
[blackbox/.style={draw, fill=blue!20, rectangle, minimum height=2cm, minimum width=11cm},
dottedbox/.style={draw, rectangle, loosely dashed, minimum height=3.5cm, minimum width=8cm},
clearbox/.style={draw, rectangle, minimum height=2cm, minimum width=7cm}]
\draw[dotted] (2,5) -- (2,7);
\draw[dotted] (3,5) -- (3,7);
\draw[dotted] (4,5) -- (4,7);
\draw[dotted] (5,5) -- (5,7);
\draw[dotted] (6,5) -- (6,7);
\draw[dotted] (7,5) -- (7,7);
\draw[dotted] (2,1) -- (2,3);
\draw[dotted] (3,1) -- (3,3);
\draw[dotted] (4,1) -- (4,3);
\draw[dotted] (5,1) -- (5,3);
\draw[dotted] (6,1) -- (6,3);
\draw[dotted] (7,1) -- (7,3);
\draw[dotted] (13,3) -- (13,5);
\draw[dotted] (14,3) -- (14,5);
\draw[dotted] (15,3) -- (15,5);
\draw[dotted] (16,3) -- (16,5);
\draw[dotted] (17,3) -- (17,5);
\draw[dotted] (18,3) -- (18,5);
\draw[dotted] (19,3) -- (19,5);
\node[clearbox]  (topleft) at (4.5,6) [label=above:$\phi_\omega(\pi(w_1 u))$] {};
\node[blackbox] (topright) at (13.5,6) [label=above:$\phi(\pi(w_2)h)$] {$\phi(\mathbb{P}[w_2])$};
\node[blackbox] (midleft) at (6.5,4)  {$\phi(1,\kappa(\pi(w_1)\cdot\mathbb{P}[u,v],\psi(w_2)*\eta))$};
\node[clearbox] (midright) at (15.5,4)  {};
\node [fill=white] at (15.5,4) { $\phi(1\cdot\kappa(\pi(w_1 u)h,\psi(w_2)*\eta),\kappa(\psi(w_2),\eta))$};
\node[clearbox] (bottomleft) at (4.5,2) [label=below:$\phi_\omega(\pi(w_1 v))$] {};
\node[blackbox] (bottomright) at (13.5,2) [label=below:$\phi(\pi(w_2)h)$] {$\phi(\mathbb{P}[w_2])^{-1}$};
\draw[very thick] (8,5) -- node[above]  {$\phi(1,\kappa(h,\psi(w_2)*\eta))$} (12,5);
\draw[very thick] (8,3) -- node[below]  {$\phi(1,\kappa(h,\psi(w_2)*\eta))$} (12,3);
\end{tikzpicture}
}
\end{center}
\caption{The path $\phi(\theta(\mathbb{E}))$ where $\mathbb{E} = (w_1, u=v, +1, w_2)$.}
\label{fig_phi_theta_E}
\end{figure}
\begin{align*}
& \phantom{=} \;\;
\phi(\bbq_1) & & \\
& =
\phi(
\pi(w_1 u) \cdot \bbp[w_2]
) & & \\
& =
\phi(1, \kappa(\pi(w_1 u), \iota \bbp[w_2] * \eta)) \cdot
\phi(1 \cdot \kappa(\pi(w_1 u), \iota \bbp[w_2] * \eta), \kappa(\bbp[w_2], \eta))
& & \mbox{(by Corollary~\ref{corol_useful})}
\\
& =
\phi(1, \kappa(\pi(w_1 u), \pi(w_2)h * \eta)) \cdot
\phi(1 \cdot \kappa(\pi(w_1 u), \pi(w_2)h * \eta), \kappa(\bbp[w_2], \eta))
 & & \\
& =
\phi_\omega(\pi(w_1 u)) \cdot
\phi(1 \cdot \kappa(\pi(w_1 u), \omega), \kappa(\bbp[w_2], \eta))
 & & \mbox{(by Lemma~\ref{lem_5.10'}(i))} \\
& =
\phi_\omega(\pi(w_1 u)) \cdot
\phi(1, \kappa(\bbp[w_2], \eta))
 & & \mbox{(by Lemma~\ref{Ruskuc_Lemma5.4}(ii))} \\
& =
\phi_\omega(\pi(w_1 u)) \cdot
\phi(\bbp[w_2]). & &
\end{align*}
Similarly one obtains:
\[
\phi(\bbq_3)
=
\phi(
\pi(w_1 v) \cdot \bbp[w_2]^{-1}
)
=
\phi_\omega(\pi(w_1 v)) \cdot
\phi(\bbp[w_2])^{-1},
\]
and also applying Corollary~\ref{corol_useful}:
\begin{eqnarray*}
& & \phi(\bbq_2) \\
& = &
\phi(
\pi(w_1) \cdot \bbp[u,v] \cdot \psi(w_2)
)
\\
& = &
\phi(1, \kappa(\pi(w_1) \cdot \bbp[u,v], \psi(w_2) * \eta)) \cdot
\phi(1 \cdot \kappa( \pi(w_1) \cdot \iota\bbp[u,v], \psi(w_2) * \eta), \kappa(\psi(w_2), \eta))
\\
& = &
\phi(1, \kappa(\pi(w_1) \cdot \bbp[u,v], \psi(w_2) * \eta))
\cdot
\phi(1 \cdot \kappa(\pi(w_1 u) h , \psi(w_2) * \eta), \kappa(\psi(w_2), \eta)).
\end{eqnarray*}
Now
\begin{align*}
& \;
\iota \phi(1, \kappa(\pi(w_1) \cdot \bbp[u,v], \psi(w_2) * \eta)) & &
\\
& \equiv
\phi(1, \kappa(\pi(w_1) \cdot \iota \bbp[u,v], \psi(w_2) * \eta)) & &
\\
& \equiv
\phi(1, \kappa(\pi(w_1 u) h, \psi(w_2) * \eta)) & &
\\
& \equiv
\phi(1,
\kappa(\pi(w_1 u), h\psi(w_2) * \eta)
\kappa(h, \psi(w_2) * \eta) & & \mbox{(by Lemma \ref{Ruskuc_Lemma5.2}(i))}
\\
& \equiv
\phi(1,
\kappa(\pi(w_1 u), \omega)
\kappa(h, \psi(w_2) * \eta)
) & & \mbox{(since $\psi(w_2) \in H$)}
\\
& \equiv
\phi(1,
\kappa(\pi(w_1 u), \omega))
\phi(1 \cdot \kappa(\pi(w_1 u), \omega),
\kappa(h, \psi(w_2) * \eta)
) & & \mbox{(by Lemma \ref{Ruskuc_Lemma5.1}(i))}
\\
& \equiv
\phi_\omega(\pi(w_1 u))
\phi(1,
\kappa(h, \psi(w_2) * \eta)
). & & \mbox{(by Lemma~\ref{Ruskuc_Lemma5.4}(ii))}
\end{align*}
Similarly:
\[
\tau \phi(1, \kappa(\pi(w_1) \cdot \bbp[u,v], \psi(w_2) * \eta))
 \equiv
\phi_\omega(\pi(w_1 v))
\phi(1,
\kappa(h, \psi(w_2) * \eta).
\]
From this we see that for $\phi(\theta(\bbe))$ the hypotheses of Lemma~\ref{lem_form} hold (see Figures~\ref{fig_step0} and ~\ref{fig_phi_theta_E}) with:
\[
\begin{array}{rclcrcl}
\bbp & = & \phi(\bbp[w_2])
& &
\bbf & = & \phi(1, \kappa(\pi(w_1) \cdot \bbp[u,v], \psi(w_2) * \eta))
\\
\alpha & \equiv & \phi_\omega(\pi(w_1 u))
& &
\beta & \equiv & \phi_\omega(\pi(w_1 v))
\\
\gamma & \equiv & \phi(1 \cdot \kappa(\pi(w_1 u) h , \psi(w_2) * \eta), \kappa(\psi(w_2), \eta)
& &
y & \equiv & \phi(1,
\kappa(h, \psi(w_2) * \eta)
) \in W.
\end{array}
\]
Applying Lemma~\ref{lem_form} we conclude that $\phi(\theta(\bbe))$ is $\sim_{Y_2}$-homotopic to:
\begin{align*}
& \; \; \phantom{=}
[
(\alpha \cdot {\bbd_y}^{-1}) \circ
(\bbf \cdot \hat{y}) \circ
(\beta \cdot \bbd_y)
] \cdot \iota \phi(\bbp[w_2]).
\\
& = 
[
(\phi_\omega(\pi(w_1 u)) \cdot {\bbd_y}^{-1}) \circ
(\phi(1, \kappa(\pi(w_1) \cdot \bbp[u,v], \psi(w_2) * \eta)) \cdot \hat{y}) \circ
(\phi_\omega(\pi(w_1 v)) \cdot \bbd_y)
] \cdot \phi(\iota \bbp[w_2])
\\
& = 
\phi_\omega(\pi(w_1))
\cdot
\;
[
(\phi_\omega(\pi(u)) \cdot \bbd_y^{-1}) \circ
(\phi(1,\kappa(\bbp[u,v], \psi(w_2) * \eta )) \cdot \hat{y}) \circ
(\phi_\omega(\pi(v)) \cdot \bbd_y)
]
\;
\cdot
\phi(\pi(w_2)h).
\end{align*}
This completes the proof of the lemma.
\end{proof}



\begin{lem}
\label{lem_lambdagood}
Let $\bbe = w_1 \cdot \bba_{u=v} \cdot {w_2}$ be an arbitrary edge of $\Gamma(\gq)$, where $w_1, {w_2} \in B^*$ and $\bba_{u=v}$ is elementary. Then
\begin{align*}
\Lambda_{\iota \bbe}^{-1} \circ
\bbe \circ
\Lambda_{\tau \bbe}
& \sim_0 
(\tau \Lambda_{w_1}') \cdot
[{\Lambda_u'}^{-1} \circ \bba_{u=v} \circ \Lambda_v']
\cdot
(\tau \Lambda_{w_2}) \\
& = 
\phi_\omega(\pi(w_1)) \cdot
[{\Lambda_u'}^{-1} \circ \bba_{u=v} \circ \Lambda_v']
\cdot
\phi(\pi({w_2})h). \\
\end{align*}
\end{lem}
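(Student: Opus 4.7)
I would start by unfolding $\Lambda_{\iota\bbe}$ and $\Lambda_{\tau\bbe}$ into three visible segments. A direct calculation shows that the $\downarrow$-operation is associative (both $(\bbp \da \bbq) \da \bbr$ and $\bbp \da (\bbq \da \bbr)$ expand to $(\bbp \cdot \iota\bbq \cdot \iota\bbr) \circ (\tau\bbp \cdot \bbq \cdot \iota\bbr) \circ (\tau\bbp \cdot \tau\bbq \cdot \bbr)$), so the definition of $\Lambda$ gives $\Lambda_{w_1 u w_2} = \Lambda_{w_1}' \da \Lambda_u' \da \Lambda_{w_2}$. Expanding once yields the decomposition
\[
\Lambda_{\iota\bbe} \;=\; A_u \circ B_u \circ C_u,
\]
where $A_u = \Lambda_{w_1}' \cdot uw_2$, $B_u = \phi_\omega(\pi(w_1)) \cdot \Lambda_u' \cdot w_2$ and $C_u = \phi_\omega(\pi(w_1)) \phi_\omega(\pi(u)) \cdot \Lambda_{w_2}$, together with the analogous decomposition $\Lambda_{\tau\bbe} = A_v \circ B_v \circ C_v$. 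The identifications $\tau\Lambda_{w_1}' = \phi_\omega(\pi(w_1))$ (by induction, using \eqref{omega_eq}) and $\tau\Lambda_{w_2} = \phi(\pi(w_2)h)$ (computed as in the proof of Lemma~\ref{lem_Lambda}) deliver the second-line equality of the statement.

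To prove the homotopy, observe that
\[
B_u^{-1} \circ \bigl(\phi_\omega(\pi(w_1)) \cdot \bba_{u=v} \cdot w_2\bigr) \circ B_v
\;=\;
\phi_\omega(\pi(w_1)) \cdot \bigl[{\Lambda_u'}^{-1} \circ \bba_{u=v} \circ \Lambda_v'\bigr] \cdot w_2
\]
holds by direct inspection. So everything reduces to two \emph{sliding} identities:
\[
A_u^{-1} \circ \bbe \circ A_v \;\sim_0\; \phi_\omega(\pi(w_1)) \cdot \bba_{u=v} \cdot w_2,
\]
\[
C_u^{-1} \circ \bigl(\phi_\omega(\pi(w_1)) \cdot [{\Lambda_u'}^{-1} \circ \bba_{u=v} \circ \Lambda_v'] \cdot w_2\bigr) \circ C_v
\;\sim_0\;
\phi_\omega(\pi(w_1)) \cdot [{\Lambda_u'}^{-1} \circ \bba_{u=v} \circ \Lambda_v'] \cdot \phi(\pi(w_2)h).
\]

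Both identities follow by iterated application of the disjoint defining paths \eqref{eqn_pullup_pushdown} of the Squier complex $\mathcal{D}(\gq)$. For the first, factor $\Lambda_{w_1}' = \bbf_1 \circ \cdots \circ \bbf_N$ as a product of single edges with $\bbf_i : x_{i-1} \to x_i$, $x_0 = w_1$ and $x_N = \phi_\omega(\pi(w_1))$. Each $\bbf_i$ rewrites inside the $w_1$-block while $\bba_{u=v}$ rewrites the middle $u$-block, so they are disjoint and the defining path $[\bbf_i, \bba_{u=v}] \cdot w_2$ is null-homotopic; rearranging gives
\[
(\bbf_i^{-1} \cdot uw_2) \circ (x_{i-1} \cdot \bba_{u=v} \cdot w_2) \circ (\bbf_i \cdot vw_2) \;\sim_0\; x_i \cdot \bba_{u=v} \cdot w_2.
\]
An induction on $i$ slides $\bbe = x_0 \cdot \bba_{u=v} \cdot w_2$ outward through all of $\Lambda_{w_1}'$, establishing the first identity. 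The second is strictly analogous, with $\Lambda_{w_2}$ playing the role of $\Lambda_{w_1}'$: every edge of $\Lambda_{w_2}$ rewrites inside the $w_2$-block and is therefore disjoint from every edge of ${\Lambda_u'}^{-1} \circ \bba_{u=v} \circ \Lambda_v'$, so the same iterated diamond argument slides those edges through the middle path, changing the right-acting word from $w_2$ to $\phi(\pi(w_2)h)$.

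The main obstacle, in my view, is purely bookkeeping: at each stage of the inductive sliding one must carefully track which passive word appears on the right ($u$ versus $v$, and later $w_2$ versus $\phi(\pi(w_2)h)$), and unwind the $\iota$ and $\tau$ of the various subpaths correctly. The underlying geometric content, however, is just the fact that rewrites at disjoint positions commute up to $\sim_0$.
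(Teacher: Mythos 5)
Your proposal is correct and captures exactly the geometric content of the paper's proof, so I would classify it as essentially the same approach carried out at a more explicit level. The paper first writes $\Lambda_{\iota\bbe}^{-1}\circ\bbe\circ\Lambda_{\tau\bbe}$ as $({\Lambda_{w_1}'}^{-1}\ua{\Lambda_u'}^{-1}\ua{\Lambda_{w_2}}^{-1})\circ(w_1\cdot\bba_{u=v}\cdot w_2)\circ(\Lambda_{w_1}'\da\Lambda_v'\da\Lambda_{w_2})$ and then finishes in two moves by invoking Lemma~\ref{lem_behaviour}: parts (iv), (i)--(iii), and above all (v) ($\bbp\da\bbq\sim_0\bbp\ua\bbq$) to regroup the chunks, and then the null-homotopy of ${\Lambda_{w_1}'}^{-1}\circ\Lambda_{w_1}'$ and ${\Lambda_{w_2}}^{-1}\circ\Lambda_{w_2}$ to collapse to $(\tau\Lambda_{w_1}')\cdot[\cdot]\cdot(\tau\Lambda_{w_2})$. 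What you have done is unwind Lemma~\ref{lem_behaviour}(v) by hand: your two ``sliding identities'' are precisely the iterated-diamond derivations of $\bbp\da\bbq\sim_0\bbp\ua\bbq$, once with $\bba_{u=v}$ sliding through the single-block path $\Lambda_{w_1}'$, and once with the three-piece middle path sliding through $\Lambda_{w_2}$, and your middle ``equality'' step is the interchange-law bookkeeping that the paper hides inside the $\da$-notation. Your decomposition $\Lambda_{\iota\bbe}=A_u\circ B_u\circ C_u$ and the endpoint computations $\tau\Lambda_{w_1}'\equiv\phi_\omega(\pi(w_1))$ (by \eqref{omega_eq}) and $\tau\Lambda_{w_2}\equiv\phi(\pi(w_2)h)$ (Lemma~\ref{lem_Lambda}) are exactly what the paper's third line asserts. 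The upshot is that your argument is a few lines longer but more self-contained, whereas the paper's is shorter because it has already isolated the $\da/\ua$ algebra in Lemma~\ref{lem_behaviour}; you are re-deriving a special case of that lemma inside this proof.
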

\begin{figure}
\begin{center}
\scalebox{0.7}
{
\begin{tikzpicture}
[blackbox/.style={draw, fill=blue!20, rectangle, minimum height=1.5cm, minimum width=1.5cm},
dottedbox/.style={draw, rectangle, loosely dashed, minimum height=3.5cm, minimum width=4cm},
clearbox/.style={draw, rectangle, minimum height=1.5cm, minimum width=1.5cm}]
\node at (1.75,10.75) [clearbox] {};
\node at (1.75+1.5,10.75) [clearbox] {};
\node at (1.75+1.5+1.5,10.75) [blackbox] {$\Lambda_\beta^{-1}$};
\node at (1.75,10.75-1.5) [clearbox] {};
\node at (1.75+1.5,10.75-1.5) [blackbox] {$\Lambda_u'^{-1}$};
\node at (1.75+1.5+1.5,10.75-1.5) [clearbox] {};
\node at (1.75,10.75-3) [blackbox] {$\Lambda_\alpha'^{-1}$};
\node at (1.75+1.5,10.75-3) [clearbox] {};
\node at (1.75+1.5+1.5,10.75-3) [clearbox] {};
\node at (1.75,10.75-4.5) [clearbox] {$\alpha \cdot$};
\node at (1.75+1.5,10.75-4.5) [blackbox] {$\mathbb{A}_{u=v}$};
\node at (1.75+1.5+1.5,10.75-4.5) [clearbox] {$\cdot \beta$};
\node at (1.75,10.75-6) [blackbox] {$\Lambda_\alpha'$};
\node at (1.75+1.5,10.75-6) [clearbox] {};
\node at (1.75+1.5+1.5,10.75-6) [clearbox] {};
\node at (1.75,10.75-7.5) [clearbox] {};
\node at (1.75+1.5,10.75-7.5) [blackbox] {$\Lambda_v'$};
\node at (1.75+1.5+1.5,10.75-7.5) [clearbox] {};
\node at (1.75,10.75-9) [clearbox] {};
\node at (1.75+1.5,10.75-9) [clearbox] {};
\node at (1.75+1.5+1.5,10.75-9) [blackbox] {$\Lambda_\beta$};
\node at (6.75,6.25) {$\sim_0$};
%
%
%
%
\node at (1.75+7,10.75) [blackbox] {$\Lambda_\alpha'^{-1}$};
\node at (1.75+1.5+7,10.75) [clearbox] {};
\node at (1.75+1.5+1.5+7,10.75) [clearbox] {};
\node at (1.75+7,10.75-1.5) [blackbox] {$\Lambda_\alpha'$};
\node at (1.75+1.5+7,10.75-1.5) [clearbox] {};
\node at (1.75+1.5+1.5+7,10.75-1.5) [clearbox] {};
\node at (1.75+7,10.75-3) [clearbox] {};
\node at (1.75+1.5+7,10.75-3) [blackbox] {$\Lambda_u'^{-1}$};
\node at (1.75+1.5+1.5+7,10.75-3) [clearbox] {};
\node at (1.75+7,10.75-4.5) [clearbox] {};
\node at (1.75+1.5+7,10.75-4.5) [blackbox] {$\mathbb{A}_{u=v}$};
\node at (1.75+1.5+1.5+7,10.75-4.5) [clearbox] {};
\node at (1.75+7,10.75-6) [clearbox] {};
\node at (1.75+1.5+7,10.75-6) [blackbox] {$\Lambda_v'$};
\node at (1.75+1.5+1.5+7,10.75-6) [clearbox] {};
\node at (1.75+7,10.75-7.5) [clearbox] {};
\node at (1.75+1.5+7,10.75-7.5) [clearbox] {};
\node at (1.75+1.5+1.5+7,10.75-7.5) [blackbox] {$\Lambda_\beta^{-1}$};
\node at (1.75+7,10.75-9) [clearbox] {};
\node at (1.75+1.5+7,10.75-9) [clearbox] {};
\node at (1.75+1.5+1.5+7,10.75-9) [blackbox] {$\Lambda_\beta$};
\end{tikzpicture}
}
\end{center}
\caption{Proof of Lemma~\ref{lem_lambdagood}.}
\label{fig_little_boxes}
\end{figure}
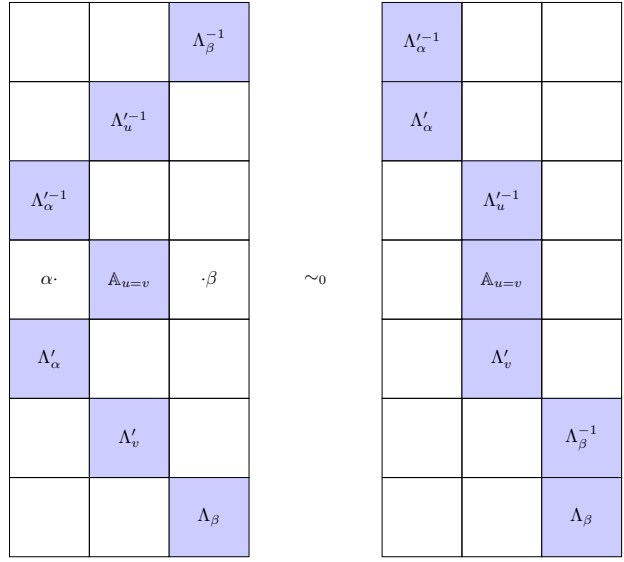
\begin{proof}
The proof is illustrated in Figure~\ref{fig_little_boxes}. 
We have:
\begin{align*}
&
\phantom{\sim_0}\;\;
\Lambda_{\iota \bbe}^{-1} \circ
\bbe \circ
\Lambda_{\tau \bbe} & & \\
& = 
({\Lambda_{w_1}'}^{-1} \ua
{\Lambda_u'}^{-1} \ua
{\Lambda_{w_2}}^{-1})
\circ
(w_1 \cdot \bba_{u=v} \cdot {w_2})
\circ
(\Lambda_{w_1}' \da \Lambda_v' \da \Lambda_{w_2}) &&  \mbox{(by definition of $\Lambda$)} \\
& \sim_0
({\Lambda_{w_1}'}^{-1} \circ \Lambda_{w_1}')
\da
({\Lambda_u'}^{-1} \circ \bba_{u=v} \circ \Lambda_v')
\da
({\Lambda_{w_2}}^{-1} \circ \Lambda_{w_2})  && \mbox{(by Lemma~\ref{lem_behaviour})}\\
& \sim_0
(\tau \Lambda_{w_1}') \cdot
[{\Lambda_u'}^{-1} \circ \bba_{u=v} \circ \Lambda_v']
\cdot
(\tau \Lambda_{w_2}). && \qedhere
\end{align*}
\end{proof}
Note that in both  Lemmas \ref{lem_reducing} and \ref{lem_lambdagood} the words acting on the left and right are, respectively, 
$\phi_\omega(\pi(w_1))$ and $\phi(\pi(w_2)h)$.
Therefore combining the two lemmas we immediately obtain the following.

\begin{lem}
\label{prop_thekey}
Let $\bbe = w_1 \cdot \bba_{u=v}^{\epsilon} \cdot {w_2}$ be an arbitrary edge of $\Gamma(\gq)$, where $w_1, {w_2} \in B^*$ and $\bba_{u=v}^{\epsilon}$ is elementary. Then in $\Gamma(\gq)$ the closed path
$\Lambda_{\iota \bbe}^{-1} \circ \bbe \circ \Lambda_{\tau \bbe} \circ \phi (\theta (\bbe))$ is $\sim_{Y_2}$-homotopic to
\begin{equation}
\label{eqn_bigone}
\phi_\omega(\pi(w_1))
\cdot 
\left[
{\Lambda_u'}^{-1} \circ \bba_{u=v}^{\epsilon} \circ \Lambda_v' \circ
(\phi_\omega(\pi(u)) \cdot \bbd_y^{-1}) \circ
\bbq_{u,v,\epsilon,w_2}
\circ
(\phi_\omega(\pi(v)) \cdot \bbd_y)
\right] 
\cdot \phi(\pi(w_2)h)
\end{equation}
where $y = \phi(1, \kappa(h, \psi(w_2) * \eta) \in W$ and
$\bbq_{u,v,\epsilon,w_2} = \phi(1,\kappa(\bbp[u,v]^\epsilon, \psi(w_2) * \eta )) \cdot \hat{y}$.
\end{lem}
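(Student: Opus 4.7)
The plan is to combine Lemmas~\ref{lem_reducing} and \ref{lem_lambdagood} directly, as the paragraph immediately preceding the statement promises. First I would apply Lemma~\ref{lem_lambdagood} to the prefix $\Lambda_{\iota\bbe}^{-1}\circ\bbe\circ\Lambda_{\tau\bbe}$, obtaining a $\sim_0$-homotopy to
$$\phi_\omega(\pi(w_1))\cdot[{\Lambda_u'}^{-1}\circ\bba_{u=v}^{\epsilon}\circ\Lambda_v']\cdot\phi(\pi(w_2)h).$$
Next I would apply Lemma~\ref{lem_reducing} to the suffix $\phi(\theta(\bbe))$, obtaining a $\sim_{Y_2}$-homotopy to
$$\phi_\omega(\pi(w_1))\cdot[(\phi_\omega(\pi(u))\cdot\bbd_y^{-1})\circ\bbq_{u,v,\epsilon,w_2}\circ(\phi_\omega(\pi(v))\cdot\bbd_y)]\cdot\phi(\pi(w_2)h).$$
The crucial observation, already flagged in the remark preceding the statement, is that both decompositions present the same outer acting words $\phi_\omega(\pi(w_1))$ on the left and $\phi(\pi(w_2)h)$ on the right, sandwiching a middle path.

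Once this is noticed, the conclusion is a matter of concatenation. Using the elementary identity $(\alpha\cdot\bbp_1\cdot\beta)\circ(\alpha\cdot\bbq_1\cdot\beta)=\alpha\cdot(\bbp_1\circ\bbq_1)\cdot\beta$, valid whenever the left-hand side is a composable pair of paths in $\Gamma(\gq)$, the composition of the two $\sim_{Y_2}$-homotopic expressions is itself $\sim_{Y_2}$-homotopic to $\phi_\omega(\pi(w_1))$ times the composition of the two inner middle paths times $\phi(\pi(w_2)h)$. This is precisely the expression \eqref{eqn_bigone}.

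The only piece of bookkeeping that remains is to verify that the inner paths compose in $\Gamma(\gq)$, i.e., that $\tau \Lambda_v'$ matches the initial vertex of $(\phi_\omega(\pi(u))\cdot\bbd_y^{-1})$, and similarly at the outer endpoints (so that the sandwich composition makes sense as a path at all). This is handled directly from the definitions of $\Lambda_v'$ and $\bbd_y$ (a path from $y\hat{y}$ to the empty word), together with the multiplicativity identity \eqref{omega_eq} for $\phi_\omega\circ\pi$ and Lemmas~\ref{Ruskuc_Lemma5.1}(i) and \ref{Ruskuc_Lemma5.4}(ii), and requires no substantive new argument. I therefore anticipate no real obstacle: the lemma is essentially a one-line consequence of its two predecessors, and is best presented by simply inspecting the two decompositions side-by-side.
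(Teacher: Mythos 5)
Your proposal is correct and is essentially the paper's own proof: the authors prove this lemma with the single sentence ``This follows immediately from Lemmas~\ref{lem_reducing} and \ref{lem_lambdagood},'' after having pointed out in the preceding paragraph that both lemmas produce paths sandwiched by the same outer acting words $\phi_\omega(\pi(w_1))$ and $\phi(\pi(w_2)h)$. Your additional bookkeeping about composability of the inner paths, while left implicit by the authors, is a reasonable thing to flag and does not constitute a departure from their argument.
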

\begin{proof}
This follows immediately from Lemmas \ref{lem_reducing} and \ref{lem_lambdagood}. 
\end{proof}
This brings us to the main result of this section.
The key observation is that the number of possibilities for the middle term in \eqref{eqn_bigone}  is easily seen to be finite when each of the sets $J$, $\Lambda$, $A$ and $\mathfrak{R}$ is finite. In particular, even though $w_2 \in B^*$ ranges over infinitely many possible words, there are still only finitely many possibilities for $\psi(w_2) * \eta$ (since $J$ is assumed to be finite), and consequently there are only finitely many possibilities for $\bbq_{u,v,\epsilon,w_2}$. If in addition the homotopy base $X$ for $S$ is finite, then, we obtain a finite homotopy base for the Sch\"{u}tzenberger group $\gG$.
\begin{thm}
\label{thm_baseforG}
Let $S$ be a monoid defined by a presentation $\lb A | \mathfrak{R} \rb$, $H$ be an $\gh$-class of $S$, and let $\gG$ be the Sch\"{u}tzenberger group of $H$.

Then, with the above notation, $\gG$ is defined by the presentation $\lb B | \mathfrak{U} \rb$. Furthermore, if $X$ is a homotopy base for $\lb A | \mathfrak{R} \rb$ then $Y_1 \cup Y_2 \cup Y_3$ is a homotopy base for $\lb B | \mathfrak{U} \rb$ where:
\begin{align*}
Y_1 & = 
\{ \phi(\lambda \cdot \kappa(\alpha, \iota \bbp \beta * j), \kappa(\bbp, \beta * j))
\;:\;
\bbp \in X, \;
\alpha, \beta \in A^*, \;
j \in J, \;
\lambda \in \Lambda, \;
H_{\lambda} \subseteq Sr_{{\alpha \iota \bbp \beta} * j}
\}, \\
Y_2 & =  \{
(\bbd_y^{-1} \cdot y) \circ (y \cdot \bbd_y^*):
y \in W
\},
\end{align*}
with $W = \{
\phi (1, \kappa(h,j)): j \in J, h * j \neq 0
\}$, and $Y_3$ is the set of all paths:
\[
{\Lambda_u'}^{-1} \circ \bba_{u=v}^{\epsilon} \circ \Lambda_v' \circ
(\phi_\omega(\pi(u)) \cdot \bbd_y^{-1}) \circ
\bbq_{u,v,\epsilon,w_2}
\circ
(\phi_\omega(\pi(v)) \cdot \bbd_y)
\]
where 
$w_2 \in A^*$, 
$(u=v) \in \mathfrak{U}$, 
$\epsilon=\pm 1$,
$y = \phi(1, \kappa(h, \psi(w_2) * \eta)$ and
\[
\bbq_{u,v,\epsilon,w_2} = \phi(1,\kappa(\bbp[u,v]^\epsilon, \psi(w_2) * \eta )) \cdot \hat{y}.
\]
In particular, if $A$, $\mathfrak{R}$, $J$, $\Lambda$ and $X$ are all finite then both the presentation $\lb B | \mathfrak{U} \rb$ and the homotopy base $Y_1 \cup Y_2 \cup Y_3$ are finite.
\end{thm}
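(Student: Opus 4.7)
The theorem makes two substantive assertions beyond the finiteness addendum: that $\lb B \mid \mathfrak{U} \rb$ presents $\gG$, which is already done in Theorem~\ref{thm_thepresentation}, and that $Y_1 \cup Y_2 \cup Y_3$ is a homotopy base for this presentation. My plan is to attack the second assertion by starting from Lemma~\ref{lem_infinitetriv}, which tells us that the (generally infinite) set $Z \cup Y_1$ is already a homotopy base. Thus it will suffice to show that every closed path in $Z$ is $\sim_{Y_1 \cup Y_2 \cup Y_3}$-null-homotopic, and the work will be done by showing that each such $Z$-path becomes $\sim_{Y_2}$-homotopic to a conjugate of an element of $Y_3$.

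Concretely, I would pick an arbitrary edge $\bbe$ of $\Gamma(\gq)$ and decompose it uniquely as $\bbe = w_1 \cdot \bba_{u=v}^{\epsilon} \cdot w_2$ with $(u=v)\in\mathfrak{U}$, $\epsilon = \pm 1$, and $w_1,w_2 \in B^*$. The closed path associated to $\bbe$ in $Z$, namely $\bbe \circ \Lambda_{\tau\bbe} \circ \phi(\theta(\bbe))^{-1} \circ \Lambda_{\iota\bbe}^{-1}$, is $\sim_0$-homotopic via a cyclic rearrangement to $\Lambda_{\iota\bbe}^{-1} \circ \bbe \circ \Lambda_{\tau\bbe} \circ \phi(\theta(\bbe))^{-1}$, and using \eqref{eqn_inversepaths} I would relate this to the form considered in Lemma~\ref{prop_thekey}. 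Applying that lemma then produces a $\sim_{Y_2}$-homotopic path of the form $\phi_\omega(\pi(w_1)) \cdot \bbr^{\pm 1} \cdot \phi(\pi(w_2)h)$, where $\bbr$ is precisely one of the closed paths we placed in $Y_3$, indexed by $(u,v,\epsilon,w_2)$. Since $\bbr$ is closed and the left/right free monoid action on the Squier complex carries it to another closed path, this conjugate is $\sim_{Y_3}$-null-homotopic by the very definition of homotopy base. Combining these reductions collapses the chosen $Z$-path relative to $Y_1 \cup Y_2 \cup Y_3$.

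For the finiteness addendum I would verify that when $A,\mathfrak{R}, J, \Lambda$ and $X$ are all finite, each of $B$, $\mathfrak{U}$, $Y_1$, $Y_2$ and $Y_3$ is finite. The finiteness of $B$ and $\mathfrak{U}$ follows from inspecting Theorem~\ref{thm_thepresentation}, since the relations (R1)--(R4) are then indexed by finite parameters. The set $Y_1$ is indexed by $\bbp \in X$, $j \in J$, $\lambda \in \Lambda$, and $\alpha,\beta \in A^*$, but the output depends on $\alpha,\beta$ only through $\alpha\iota\bbp\beta * j \in J \cup\{0\}$ together with $\lambda$, so $Y_1$ is finite. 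The set $W$ is finite as soon as $J$ is, making $Y_2$ finite. Finally $Y_3$ is indexed by $(u=v)\in\mathfrak{U}$, $\epsilon\in\{\pm 1\}$, and $w_2\in A^*$, but again only through $\psi(w_2)*\eta \in J\cup\{0\}$, so $Y_3$ is finite.

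The hardest step in executing this plan will be the careful tracking of initial and terminal vertices when invoking Lemma~\ref{prop_thekey}: verifying that the claimed $\sim_{Y_2}$-homotopy really produces a conjugate of a \emph{closed} path from $Y_3$, so that the free monoid action yields a $\sim_{Y_3}$-null-homotopic path in $\Gamma(\gq)$. Fortunately the heavy lifting for this bookkeeping has already been packaged into Lemmas~\ref{lem_reducing} and~\ref{lem_lambdagood}, and once those are combined as in Lemma~\ref{prop_thekey}, the remainder of the argument is straightforward assembly.
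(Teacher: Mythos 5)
Your proposal is correct and follows essentially the same route as the paper: cite Theorem~\ref{thm_thepresentation} for the presentation claim, reduce to the infinite homotopy base $Z\cup Y_1$ via Lemma~\ref{lem_infinitetriv}, and then use Lemma~\ref{prop_thekey} to show that each $Z$-path is $\sim_{Y_2}$-homotopic to a left/right translate of a closed path in $Y_3$, hence $\sim_{Y_2\cup Y_3}$-trivial. Your finiteness discussion likewise matches the observations made in the text preceding the theorem (and one minor remark: for $Y_1$ the output actually factors through the pair $(\lambda\cdot\kappa(\alpha,\iota\bbp\beta*j),\,\beta*j)\in\Lambda\times J$ rather than through $\alpha\iota\bbp\beta*j$ alone, but the finiteness conclusion is the same).
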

\begin{proof}
That $\lb B | \mathfrak{U} \rb$ presents $\gG$ was proved in Theorem~\ref{thm_thepresentation}.

Now let $X$ be a homotopy base for $\lb A | \mathfrak{R} \rb$.
By Lemma~\ref{lem_infinitetriv}, $Z \cup Y_1$ is a homotopy base for  $\lb B | \mathfrak{U} \rb$. But it follows from Lemma~\ref{prop_thekey} that
$
\sim_{Y_2 \cup Y_3} \supseteq \sim_{Z}.
$
Thus $\sim_{Y_1 \cup Y_2 \cup Y_3} \supseteq \sim_{Y_1 \cup Z}$, and we conclude that $Y_1 \cup Y_2 \cup Y_3$ is a homotopy base for  $\lb B | \mathfrak{U} \rb$.
\end{proof}
Our main result, Theorem~\ref{thm_main_result} stated in Section~1 above, is an immediate consequence of Theorems~\ref{thm_thepresentation} and ~\ref{thm_baseforG}.

\begin{proof}[Proof of Theorem~\ref{thm_main_result}]
It follows from assumption (a) that the index set $\Lambda$ is finite, and from assumption (b) that the index set $J$ is finite. If $S$ is finitely presented then by Theorem~\ref{thm_thepresentation} so is the Sch\"{u}tzenberger group $\mathcal{G}$, proving (i). If in addition $S$ has finite derivation type then $A$, $\mathfrak{R}$, $J$, $\Lambda$ and $X$ are all finite and so by Theorem~\ref{thm_baseforG}, $Y_1 \cup Y_2 \cup Y_3$ is a finite homotopy base for the Sch\"{u}tzenberger group $\mathcal{G}$ and so $\mathcal{G}$ has finite derivation type, proving (ii). 
\end{proof}

\bibliographystyle{abbrv}

\def\cprime{$'$} \def\cprime{$'$} \def\cprime{$'$}

\end{document}